\documentclass{amsart}

\usepackage[utf8]{inputenc}

\usepackage[style=ext-numeric,
           maxbibnames=99,  
           maxcitenames=99,  
           giveninits=true, 
           sortcites=true,  
           backend=biber]{biblatex}
\addbibresource{paper-nebel-birsan-neff-sander-cosserat-shell-simulations.bib}

\DeclareFieldFormat
  [article,inbook,incollection,inproceedings,patent,thesis,unpublished]
  {titlecase:title}{\MakeSentenceCase*{#1}}

\renewrobustcmd*{\bibinitdelim}{\,}

\usepackage{amssymb, amsmath, amsthm}
\usepackage{bbold}
\usepackage{graphicx}
\usepackage[shortlabels]{enumitem}
\usepackage{amsfonts}
\usepackage{mathtools}
\usepackage{orcidlink}

\usepackage{xcolor}
\definecolor{lightgreen}{RGB}{200,200,34}
\definecolor{mediumgreen}{RGB}{120,160,34}
\definecolor{darkgreen}{RGB}{59, 122, 87}
\definecolor{orangered}{RGB}{255,66,14}
\definecolor{coordinateblue}{RGB}{0,100,200}
\definecolor{coordinatepink}{RGB}{250,50,0}
\definecolor{neonpink}{RGB}{255,85,255}
\definecolor{lightblue}{RGB}{20,140,190}
\definecolor{mediumblue}{RGB}{72,108,180}
\definecolor{darkblue}{RGB}{16,24,143}
\definecolor{birsanblue}{RGB}{200,100,125}
\definecolor{3dcolor}{RGB}{135,104,150}
\usepackage{mathtools}
\usepackage{colonequals}
\usepackage{lmodern}
\usepackage{mathrsfs}
\usepackage{setspace} 
\usepackage{pgfplots}
\usepgfplotslibrary{groupplots}
    
\usepackage{microtype}
\usepackage{todonotes}

\usepackage{esdiff}
\usepackage{float}

\usepackage{tikz}
\usetikzlibrary{arrows}

\usepackage{multirow}
\usepackage{changes}
\usepackage{hyperref}

\usepackage{csvsimple}
\begin{filecontents*}{model-comparison.csv}
	Thickness,Plot,Deflection-normal-2000,Deflection-birsan-2000,Deflection-3d-2000-21,Deflection-normal-5000,Deflection-birsan-5000,Deflection-3d-5000-21,Deflection-normal-10000,Deflection-birsan-10000,Deflection-3d-10000-21,Deflection-normal-15000,Deflection-birsan-15000,Deflection-3d-15000-21
	0.0001,1,0.0493037,0.0500463,,0.101835,0.102376,,0.178741,0.179047,,0.265066,0.300361,
	0.001,2,0.049216,0.0499488,,0.101835,0.102326,,0.178741,0.179025,,0.264891,0.300359,
	0.01,3,0.0443058,0.0443068,0.0432701,0.0993501,0.0995629,0.0977278,0.178729,0.178789,0.177433,0.264922,0.300253,0.292662
	0.1,4,0.0434615,0.0435284,0.0407728,0.0982151,0.0983522,0.0937183,0.177374,0.177967,0.173662,0.264221,0.297046,0.28614
	0.2,5,0.0409182,0.0413664,0.0375426,0.0940273,0.0950872,0.0878543,0.172245,0.175217,0.166669,0.259438,0.28825,0.279879
	0.4,6,0.0340664,0.0370824,0.0314021,0.0819039,0.0887513,0.0750361,0.156168,0.169909,0.150896,0.243027,0.270005,0.262936
\end{filecontents*}

\begin{filecontents*}{locking-muc-0.1mu-thickness-0.1.csv}
	Level,Deflection-1-11,Deflection-1-12,Deflection-1-22,Deflection-2-11,Deflection-2-12,Deflection-2-22
	1,0.15247,0.09253,0.13765,0.26506,0.17111,0.17664
	2,0.17687,0.16336,0.16638,0.21114,0.17639,0.17642
	3,0.17802,0.17359,0.17392,0.18919,0.17735,0.17741
	4,0.17774,0.17633,0.17647,0.18093,0.17736,0.17742
	5,0.17754,0.17707,0.1794,0.1784,0.17738,0.17743
\end{filecontents*}

\begin{filecontents*}{locking-muc-0.1mu-thickness-0.01.csv}
	Level,Deflection-1-11,Deflection-1-12,Deflection-1-22,Deflection-2-11,Deflection-2-12,Deflection-2-22
	1,0.15924,0.09392,0.12599,0.26575,0.17472,0.1769
	2,0.18006,0.16539,0.17247,0.21847,0.17773,0.17707
	3,0.18057,0.17482,0.17247,0.19342,0.17875,0.17873
	4,0.17976,0.17768,0.17808,0.18413,0.17876,0.17883
	5,0.17917,0.17842,0.1794,0.18046,0.17873,0.17875
\end{filecontents*}

\begin{filecontents*}{locking-muc-0.1mu-thickness-0.001.csv}
	Level,Deflection-1-11,Deflection-1-12,Deflection-1-22,Deflection-2-11,Deflection-2-12,Deflection-2-22
	1,0.15934,0.0939399999999999,0.12625,0.26575,0.17475,0.17684
	2,0.18022,0.16541,0.16862,0.21916,0.17774,0.17698
	3,0.18039,0.17485,0.17772,0.19283,0.17876,0.17872
	4,0.17982,0.17768,,0.18426,0.17878,0.17919
	5,0.17922,0.1791,0.1794,0.18055,0.17944,0.1796
\end{filecontents*}

\begin{filecontents*}{locking-muc-0.1mu-thickness-0.0001.csv}
	Level,Deflection-1-11,Deflection-1-12,Deflection-1-22,Deflection-2-11,Deflection-2-12,Deflection-2-22
	1,0.15934,0.0939099999999999,0.12626,0.26575,0.17476,0.17683
	2,0.18022,0.16541,0.16854,0.21917,0.17774,0.17698
	3,0.18038,0.17485,0.17755,0.1928,0.17876,0.17872
	4,0.17982,0.17768,0.17899,0.18425,0.17878,0.17921
	5,0.17922,0.17914,0.17942,0.18054,0.17949,0.17966
\end{filecontents*}

\begin{filecontents*}{locking-muc-0.1mu-thickness-0.2.csv}
	Level,Deflection-1-11,Deflection-1-12,Deflection-1-22,Deflection-2-11,Deflection-2-12,Deflection-2-22
	1,0.14241,0.08945,0.12511,0.26296,0.16268,0.17205
	2,0.16903,0.15684,0.15929,0.20552,0.17091,0.17135
	3,0.17165,0.16822,0.16876,0.18231,0.17199,0.1723
	4,0.17228,0.17118,0.17144,0.17521,0.17222,0.17231
	5,0.17226,0.17208,0.17215,0.17305,0.17224,0.17232
\end{filecontents*}

\begin{filecontents*}{locking-muc-0.1mu-thickness-0.4.csv}
	Level,Deflection-1-11,Deflection-1-12,Deflection-1-22,Deflection-2-11,Deflection-2-12,Deflection-2-22
	1,0.13136,0.08406,0.10455,0.2566,0.1462,0.15503
	2,0.15127,0.14016,0.14404,0.18881,0.15398,0.15546
	3,0.15485,0.15208,0.15347,0.16547,0.15534,0.15643
	4,0.15599,0.15519,0.15601,0.15878,0.156,0.15637
	5,0.1564,0.15619,0.15666,0.15706,0.15631,0.15637
\end{filecontents*}

\begin{filecontents*}{cylinder-height.csv}
        Angle, Height
        0,                         15.0000
        -9.817477042468103e-02,    14.1329
        -0.196349540849362,        12.8815
        -0.294524311274043,        11.2033
        -0.392699081698724,         8.82082
        -0.490873852123405,         6.98778
\end{filecontents*}

\newcommand{\R}{\mathbb R}

\newcommand{\Ee}{{\boldsymbol{E}^e}}
\newcommand{\Eek}{{\boldsymbol{E}^e_k}}
\newcommand{\Ke}{{\boldsymbol{K}^e}}
\newcommand{\Kek}{{\boldsymbol{K}^e_k}}
\newcommand{\Qe}{\boldsymbol{Q}_e}
\newcommand{\Qeh}{\boldsymbol{Q}_{e,h}}

\newcommand{\hatEe}{\widehat{\boldsymbol{E}}{\mathstrut}^e}
\newcommand{\hatKe}{\widehat{\boldsymbol{K}}{\mathstrut}^e}

\newcommand{\bQ}{\boldsymbol{Q}}
\newcommand{\bR}{\boldsymbol{R}}

\newcommand{\ba}{{\boldsymbol{a}}}
\newcommand{\bb}{{\boldsymbol{b}}}
\newcommand{\bc}{{\boldsymbol{c}}}
\newcommand{\bd}{{\boldsymbol{d}}}
\newcommand{\be}{\boldsymbol{e}}
\newcommand{\boldf}{\boldsymbol{f}}
\newcommand{\bm}{\boldsymbol{m}}
\newcommand{\bn}{\boldsymbol{n}}
\newcommand{\bt}{\boldsymbol{t}}
\newcommand{\bv}{\boldsymbol{v}}
\newcommand{\bw}{\boldsymbol{w}}

\newcommand{\ovr}{\overline{R}}

\newcommand{\axl}{\operatorname{axl}}
\renewcommand{\skew}{\operatorname{skew}}
\newcommand{\sym}{\operatorname{sym}}
\newcommand{\tr}{\operatorname{tr}}
\newcommand{\dev}{\operatorname{dev}}
\newcommand{\I}{{\operatorname{I}}}
\newcommand{\II}{{\operatorname{I\!I}}}

\newcommand{\identity}{\mathbb{1}}

\newcommand{\abs}[1]{\lvert #1 \rvert}
\newcommand{\norm}[1]{\lVert #1 \rVert}
\newcommand{\Tref}{{T_\textnormal{ref}}}

\DeclareMathOperator{\dist}{dist}
\DeclareMathOperator{\SOdrei}{SO(3)}
\DeclareMathOperator*{\argmin}{arg\,min}

\newtheorem{definition}{Definition}
\newtheorem{theorem}[definition]{Theorem}

\newtheorem{lemma}[definition]{Lemma}
\newtheorem{remark}[definition]{Remark}
\newtheorem{problem}[definition]{Problem}

\theoremstyle{remark}

\graphicspath{{gfx/}}

\title[Finite elements for non-orientable Cosserat shells]
      {A geometrically nonlinear Cosserat shell model for orientable and non-orientable surfaces: Discretization with geometric finite elements}

\author[Nebel]{Lisa Julia Nebel}
\address{Lisa Julia Nebel\\
Technische Universität Dresden\\
Institut für Numerische Mathematik\\
Zellescher Weg 12--14\\
01069 Dresden\\
Germany \\
\orcidlink{0000-0002-7200-0312}~\href{https://orcid.org/0000-0002-7200-0312}{0000-0002-7200-0312}}
\email{lisa\_julia.nebel@tu-dresden.de}

\author[Sander]{Oliver Sander}
\address{Oliver Sander\\
Technische Universität Dresden\\
Institut für Numerische Mathematik\\
Zellescher Weg 12--14\\
01069 Dresden\\
Germany \\
\orcidlink{0000-0003-1093-6374}~\href{https://orcid.org/0000-0003-1093-6374}{0000-0003-1093-6374}}
\email{oliver.sander@tu-dresden.de}

\author[Bîrsan]{Mircea Bîrsan}
\address{Mircea Bîrsan\\
Universität Duisburg--Essen\\
Lehrstuhl für Nichtlineare Analysis und Modellierung \\
Fakultät für Mathematik,
Thea-Leymann Str.\,9 \\
45127 Essen \\
Germany \\
and Department of Mathematics, University ``A.I. Cuza'' of Iaşi, 700506 Iaşi, Romania \\
\orcidlink{0000-0002-1360-4044}~\href{https://orcid.org/0000-0002-1360-4044}{0000-0002-1360-4044}}
\email{mircea.birsan@uni-due.de}

\author[Neff]{Patrizio Neff}
\address{Patrizio Neff\\
Universität Duisburg--Essen\\
Lehrstuhl für Nichtlineare Analysis und Modellierung \\
Fakultät für Mathematik \\
Thea-Leymann Str.\,9 \\
45127 Essen \\
Germany \\
\orcidlink{0000-0002-1615-8879}~\href{https://orcid.org/0000-0002-1615-8879}{0000-0002-1615-8879}}
\email{patrizio.neff@uni-due.de}

\thanks{This research has been funded by the Deutsche Forschungsgemeinschaft (DFG, German Research Foundation)
–- Project: SA~2130/6-1 (L.\,Nebel and O.\,Sander) and Project no. 415894848, NE~902/8-1 (P.\,Neff) and BI~1965/2-1 (M.\,Bîrsan).}

\begin{document}

\begin{abstract}
 We investigate discretizations of a geometrically nonlinear elastic Cosserat shell
 with nonplanar reference configuration originally introduced by \citeauthor{neff_dim_reduction:2019} in~\citeyear{neff_dim_reduction:2019}.
 The shell model includes curvature terms
 up to order~5 in the shell thickness, which are crucial to reliably simulate
 high-curvature deformations such as near-folds or creases. The original model
 is generalized to shells that are not homeomorphic to a subset of $\R^2$.
 For this, we replace the originally planar parameter domain by an abstract two-dimensional manifold,
 and verify that the hyperelastic shell energy and three-dimensional reconstruction are invariant
 under changes of the local coordinate systems. This general approach allows to
 determine the elastic response for even non-orientable surfaces like the Möbius strip and the Klein bottle.
 We discretize the model with a geometric finite element method and,
 using that geometric finite elements are $H^1$-conforming, prove that the discrete shell model has a
 solution.  Numerical tests then show the general performance and versatility
 of the model and discretization method.
\end{abstract}

\keywords{elastic shell, Cosserat model, geometrically nonlinear, nonplanar reference configuration, non-orientable, geometric finite elements,
   existence, locking}

\subjclass[2010]{Primary: 65N30; Secondary: 74K25}

\maketitle

\tableofcontents

\section{Introduction}

In~\cite{neff_dim_reduction:2019,neff_derivation:2020}, the authors introduced a physically linear but geometrically nonlinear elastic Cosserat shell model for shells with a curved reference configuration.
The model was derived by dimensional reduction of a three-dimensional Cosserat continuum model,
and it is a direct generalization of the flat Cosserat
shell model of~\textcite{neff:2004}.
In later papers, different variants of the model were discussed, and justified by
derivation~\cite{birsan:2020,birsan:2021} or $\Gamma$-convergence arguments~\cite{saem_ghiba_neff:2022}.

Configurations of this
Cosserat shell consist of the total deformation $\bm : \omega \to \R^3$, which maps a two-dimensional parameter domain (the ``fictitious domain'' in the parlance of \cite{neff_dim_reduction:2019}) into three-dimensional Euclidean space,
and an independent field of microrotations $\Qe : \omega \to \SOdrei$
(where $\SOdrei$ is the special orthogonal group, i.e., the group of orthogonal
$3\times 3$ matrices with determinant $1$), which describes transverse shear
and local drilling of the shell.
The material behavior is given as a hyperelastic energy functional
\begin{equation*}
 I(\bm, \Qe)
 \colonequals
 \int_\omega \big[W_\text{memb}(\Ee, \Ke) + W_\text{bend}(\Ke) \big] \,d\omega +
\textnormal{external loads},
\end{equation*}
which features terms up to order~5 in the shell thickness.
The matrix fields $\Ee$ and $\Ke$ are strain and curvature measures of the shell, depending
on the deformation and microrotation, as well as on the geometry of an assumed stress-free reference
configuration $\bm_0$. The energy density neatly
separates the material coefficients of the original three-dimensional model
from geometric properties of the stress-free configuration.
The model is geometrically nonlinear but physically linear, which means that it is frame-indifferent and
allows for large rotations but only small elastic strains.
Nevertheless, existence of minimizers
in the space $H^1(\omega, \R^3) \times H^1(\omega, \SOdrei)$ has been shown~\cite{neff_existence:2020}.
This sets the model apart from other geometrically nonlinear shell models that
combine membrane and bending effects, but lack an existence proof.
The provable existence of minimizers is an important justification for the use of Cosserat shell models.

In the original shell model in~\cite{neff_dim_reduction:2019,neff_derivation:2020} the parameter domain $\omega$ was an open set in $\R^2$. The model was therefore restricted to shells that are homeomorphic to such sets,
which excluded even simple geometries such as spheres. In this paper we reformulate
the model for more general topologies.  Following ideas from~\cite{marsden_hughes:1983},
we replace the flat parameter domain $\omega$ of~\cite{neff_dim_reduction:2019,neff_derivation:2020} by an abstract two-dimensional manifold, again called $\omega$.
To address points in this manifold we introduce local coordinate systems, which map open sets of $\omega$
homeomorphically to subsets of $\R^2$, as is the standard construction in differential geometry.
The single flat parameter (``fictitious'') domain of~\cite{neff_dim_reduction:2019,neff_derivation:2020} is hence replaced by a set of subsets of $\R^2$, which serve as local coordinates. The situation of~\cite{neff_dim_reduction:2019,neff_derivation:2020} is recovered when the parameter domain $\omega$ can be covered by a single coordinate chart.

Configurations of the two-dimensional shell surface are given as immersions $\bm$ of $\omega$ into $\R^3$ with associated microrotation fields $\Qe:\omega \to \SOdrei$.  The immersion $\bm : \omega \to \R^3$ represents the shape
of the shell surface, and the microrotation $\Qe$ is a local rotation of the shell with respect to its
configuration in the natural, i.e., stress-free, reference state. This stress-free state is given by a second
immersion $\bm_0 : \omega \to \R^3$, which is part of the problem formulation. Unlike in models
of non-Euclidean elasticity~\cite{efrati_sharon_kupferman:2009,lewicka_mahadevan:2022,lewicka:2023} we explicitly assume the existence
of such a stress-free configuration, but note that a large part of this paper would carry over
to the more general non-Euclidean case, too.

The approach of using a general manifold~$\omega$ as the parameter domain may appear
unnecessarily abstract at first sight. However, it seems conceptually cleaner to us
than using the reference surface $\bm_0(\omega)$ (a subset of $\R^3$) for parametrization.
Indeed, unlike in the original derivation in~\cite{neff_dim_reduction:2019,neff_derivation:2020},
the map $\bm_0(\omega) \to \bm(\omega)$ (which corresponds to the restriction
of the map $\varphi_\xi : \Omega_\xi \to \Omega_c$ to the two-dimensional set
$\omega_\xi \subset \Omega_\xi$ there) never plays a role.
Also, note that we explicitly
allow the stress-free configuration $\bm_0(\omega)$ to have self-intersections,
which also makes it unsuitable as a parameter domain.
For physically meaningful results, of course, the shell surface may not self-intersect,
i.e., the configuration map $\bm$ has to be an embedding rather than an immersion.
However, when investigating non-orientable parameter manifolds~$\omega$ it turned out
that our construction can be effortlessly formulated for the more general case of immersions as well.
We therefore write everything in terms of immersions, which then makes the model cover even non-physical but interesting objects like the Klein bottle.

Models of shells with curved reference configurations exist in the literature,
but like \cite{neff_dim_reduction:2019,neff_derivation:2020} they
never explicitly discuss
objects with non-trivial topology~\cite{pimenta_campello:2009,duong_roohbakhshan_sauer:2017}.
An alternative general shell model able to handle curved reference configurations
and complicated topology is the 6-parameter shell. An account of this approach
has been presented, e.g.,  in the book of \textcite{libai_simmonds:1998}.
Other shell models
with curved reference configurations and large rotations are analyzed in the papers
\cite{burzynski_chroscielewski_daszkiewicz_witkowski:2016,burzynski_chroscielewski_witkowski:2016,chroscielewski_kreja_sabik_witkowski:2011,chroscielewski_witkowski:2011},
and in the book~\cite{wisniewski:2010}.
From a kinematical point of view,
the 6-parameter shells are equivalent to Cosserat shells, since both models involve
the deformation and a microrotation as independent variables. The difference to our approach
consists in the constitutive assumptions and the relation
to the three-dimensional theory. In contrast to our work, the papers
on 6-parameter shells assume the parent three-dimensional model to be a Cauchy continuum
(i.e., without microrotations). Also, their assumed constitutive relations are relatively simple,
since constitutive coefficients do not depend on the curvature of the reference surface.
Further details on geometrically nonlinear shells and their derivation
can be found in the monograph~\cite{steigmann_birsan_milani:2023}.

{Nontrivial topologies are covered by the Cosserat model of \textcite{sansour_bednarczyk:1995}.
Unlike our model, though, which is derived by a consistent dimensional reduction
from a three-dimensional Cosserat model, \citeauthor{sansour_bednarczyk:1995}
use the direct approach and postulate directly that the shell is a two-dimensional Cosserat continuum.
This approach is simpler than ours, but the relationship between the shell model
and the actual three-dimensional model remains obscure. In particular, with the derivation approach
we obtain the constitutive coefficients for shells and the expression of the shell strain energy density
in terms of quantities of the three-dimensional model, which are much easier to obtain in practice.
Also, by our derivation approach we are able to determine higher order terms
in the shell thickness, which are important to improve the accuracy of numerical solutions.}

\medskip

{While the first part of this manuscript is independent of any discretization, the} 
second part deals with finite element discretizations of the presented Cosserat model.
For such discretizations, we equip the shell parameter surface~$\omega$ with a triangulation,
which allows to express all integrals as sums of integrals over a reference triangle.
This corresponds to the usual practice when constructing finite element models
of curved shells, but is hardly ever spelled out in any detail.
Finite element spaces are defined with respect to the triangulation of~$\omega$.
In particular, we represent configurations of the shell by Lagrange finite element
functions $\bm_h : \omega \to \R^3$. The given stress-free configuration $\bm_0 : \omega \to \R^3$
can in principle be represented in any manner, but will most frequently also be represented
by a finite element function. While piecewise linear finite elements are possible,
they neutralize some of the advantages of the shell model presented here, because
all terms involving the curvature of $\bm_0(\omega)$ would then vanish.
We will therefore mostly use Lagrange finite elements of second order.

The discretization of the microrotation field $\Qe$ requires additional attention.
Problems with directional or orientational degrees of freedom such as the field of
microrotations $\Qe:\omega \to \SOdrei$ are difficult to treat numerically,
because spaces of functions mapping into a nonlinear set such as $\SOdrei$ or the unit sphere $S^2$
cannot form vector spaces. Consequently, approximations by vector spaces such as spaces of piecewise polynomials
are not conforming, in the sense that the image of the approximating map is not contained
in $\SOdrei$.
Seen from another direction, the problem is that there are no nontrivial $\R^{3 \times 3}$-valued polynomials with values in $\SOdrei$.
Various ad hoc approaches for discretizations of
$\SOdrei$- or $S^2$-valued fields exist in the literature, each with its own
strengths and weaknesses.
Overviews can be found, e.g., in~\cite{kuo-mo_yeh-ren:1989,mueller_bischoff:2022,romero:2004,wisniewski:2010}.

To take an example, discretizations based on interpolating Euler angles
such as~\cite{wriggers_gruttmann:1993,gruttmann_wagner_meyer_wriggers:1993}
are straightforward
to construct, but they are plagued by
coordinate singularities near certain configurations, and they are therefore only usable for
moderate rotations.  Also, the discrete models do not inherit the frame indifference of
the continuum model.
Similar problems exist for methods that interpolate between values by lifting them onto
a fixed tangent space and interpolate there~\cite{mueller:2009,muench:2007,pimenta_campello:2009,sansour_bednarczyk:1995}.
To avoid large distortions, some methods such as~\cite{muench:2007} switch between several tangent spaces.

Methods that repeatedly average between pairs of orientations can be interpreted
as generalizations of spline functions~\cite{areias_rabczuk_dias-da-costa:2013,hardering_sander:2020,absil_gousenbourger_striewski_wirth:2016}.
They lead to objective and path-independent formulations, but suffer from
spurious dependencies of the simulation results on the node ordering.
A more difficult approach, originally proposed by \citeauthor{simo_fox:1989} \cite{simo_fox:1989,simo_fox_rifai:1990}, interpolates only
the corrections of the Newton method used to solve the shell equilibrium equations.
As elements of a tangent space, these corrections can be approximated by piecewise polynomials.
For this to work, the values of the nonlinear variables have to be stored as history variables at the quadrature points.
The method mixes discretization and solver algorithm, which makes it
difficult to analyze. Unfortunately, it also leads to a dependence of the discrete solution
on the load path. This was originally shown by \textcite{crisfield_jelenic:1999} for the rod
model of \textcite{simo_vu-quoc:1986}. In the context of isogeometric analysis,
\cite{dornisch_klinkel_simeon:2013,dornisch_klinkel:2014} have used a similar approach interpolating
with NURBS functions.
{Recently, Magisano et al.\ \cite{magisano_leonetti_madeo_garcea:2020} have proposed a method
that combines interpolating the Newton increments
with a corotational approach, which does indeed lead to a scheme that has all desirable properties.}

One further possibility is the use of nonconforming discretizations.  These use
standard $\R^{3 \times 3}$-valued piecewise polynomials for the approximation of the
microrotation field, and enforce the restriction to $\SOdrei$ only
at the Lagrange points.  For Cosserat beams this is mentioned
in~\cite{romero:2004,betsch_steinmann:2002}.
For Reissner--Mindlin shells (i.e., shells with only one director),
the corresponding technique is used, e.g., in \cite{hughes_liu:1981} and follow-up work.
Such a discretization is simple, singularity-free, and preserves frame indifference.
However, the models need to be modified (explicitly or implicitly) to account for the non-orthogonality
of the microrotation field away from the Lagrange points.
Outside of shell theory, nonconforming discretizations have been analyzed mathematically
for maps into the unit sphere $S^2$~\cite{bartels_prohl:2007,alouges_jaisson:2006}.
While the original works considered only first-order finite elements
and showed only weak convergence results,
quasioptimal convergence could be shown recently for harmonic maps~\cite{bartels_palus_wang:2022}
and harmonic map heat flow~\cite{bartels_kovacs_wang:2022}, even for finite elements
of approximation order larger than~1.
A numerical study also testing such higher-order discretizations is given in~\cite{bartels_boehnlein_palus_sander:2022}.

Recently, geometric finite elements (GFEs) have emerged as an elegant and robust way
to discretize the nonlinear vector and orientation fields appearing in geometrically nonlinear
director shell models \cite{sander:2010,sander:2012,sander:2013,grohs:2013}.
They are based on generalizations of polynomial interpolation formulas to data in
non-Euclidean spaces. Indeed, unlike most previous discretization approaches,
GFE methods define actual (nonlinear) spaces of finite element functions.
This makes their construction and behavior more transparent, and it allows for rigorous
analytical investigations similar to the classical Euclidean finite element theory.
Indeed, optimal $L^2$ and $H^1$ interpolation error bounds have been shown in~\cite{grohs_hardering_sander_sprecher:2019,grohs_hardering_sander:2015,hardering:2018Arxiv,hardering:2018}
for finite element functions of any order, along with discretization error bounds for harmonic maps.

Various ways to generalize polynomial interpolation to manifold-valued data have been proposed
in the literature~\cite{hardering_sander:2020}. In this paper we use geodesic~\cite{sander:2013}
and projection-based finite elements~\cite{grohs_hardering_sander_sprecher:2019}.
Both allow for approximation functions of arbitrary order, and preserve the frame-indifference of the continuous models.
Also, both types of finite element
functions are first-order Sobolev functions. This makes analytical investigations
much easier than for competing approaches. A case in point for this claim is Chapter~\ref{sec:existence}
of this manuscript, where we give a proof for the existence of finite element solutions for the nonlinear
shell problem, reusing considerable parts of the existence proof of~\cite{neff_existence:2020}
for the continuous model.

Geometric finite elements have already
been used successfully for planar Cosserat shells~\cite{sander_neff_birsan:2016},
and for Cosserat rods with initial curvature~\cite{sander:2010}.
\textcite{mueller_bischoff:2022,grohs:2013} extended the concept to spline approximation functions,
and the latter used it to discretize the 1-director shell model of \textcite{simo_fox:1989}
(i.e., the geometrically nonlinear Reissner--Mindlin model).
\textcite{romero:2004} lists projecting onto $\SOdrei$, i.e., projection-based finite elements,
as one way to discretize Cosserat beams.
The discretizations have also appeared in experimental studies of the wrinkling behavior
of coated substrates~\cite{knapp_et_al:2021,ghosh_et_al:2021}.

The algebraic formulation of {finding stable configurations of} the discrete shell is a minimization problem on
the product space $\R^{3N_1} \times \SOdrei^{N_2}$, where $N_1$ and $N_2$ are the numbers of Lagrange nodes
used for discretizing the deformation and microrotation, respectively. This space is
a $3N_1 + 3N_2$-dimensional Riemannian manifold. We use a Riemannian trust-region
algorithm to solve this minimization problem~\cite{absil_mahony_sepulchre:2008}. This is a globalized
Newton method replacing each Newton step with a quadratic minimization problem
subject to a convex inequality constraint. {It converges for any initial iterate,
while retaining the fast local convergence of traditional Newton methods.}
Standard trust-region methods work only for energies defined on
Euclidean spaces, however \cite{absil_mahony_sepulchre:2008} presents a generalization
to energies on Riemannian manifolds. The correction problems of this generalization
are quadratic minimization problems on the (linear) tangent spaces of $\R^{3N_1} \times \SOdrei^{N_2}$,
again with a convex constraint.
To make these large constrained problems feasible, we choose a formulation where the convex constraint consists of separate bound constraints for the individual degrees of freedom. The resulting quadratic minimization problems can then be solved with a monotone multigrid method,
as explained in~\cite{sander:2012}. The challenging computations of the
tangent matrices of the energy are done using {the reverse mode of} the automatic differentiation
software ADOL-C~\cite{walther_griewank:2012},
{which in our experience outperforms automatic differentiation approaches
based on alternative number types.}

When constructing discretizations of shells and plates, locking is always an issue.
For objects with a planar stress-free configuration, (shear) locking
is determined by the finite elements used for the deformation and the microrotation fields.
As it turns out, if the stress-free geometry is curved, then the approximation of its geometry comes into
play as well. Unfortunately, the rigorous understanding and treatment of locking is still out of reach
for discretizations of geometrically nonlinear shell models. In this manuscript
we therefore only perform numerical tests.  We observe that the proposed discretization
does not exhibit shear locking if the geometry discretization is at least of second order.
This is consistent with, and generalizes, our previous results for shells with a planar
stress-free configuration~\cite{sander_neff_birsan:2016}.

\bigskip

This article presents the model and the discretization, and shows a set of numerical tests. Chapter~\ref{sec:continuous_model} presents the generalized shell model. We do not derive it from a three-dimensional model, but we show in detail how it relates to its parent model~\cite{neff_dim_reduction:2019,neff_derivation:2020} for simple topologies.
Chapter~\ref{sec:discretization} then recalls the geometric finite element method for the
approximation of microrotation fields.  Chapter~\ref{sec:discete_and_algebraic_problems} presents the
discretized shell problem, and proves rigorously that solutions (possibly non-unique)
do exist for both geodesic and projection-based finite elements, and for any approximation order.

The article then shows five numerical examples.  With the first one,
we systematically investigate the model response and locking behavior as they depend on the approximation order for the deformation field, the microrotations, and the reference surface geometry.
In a further sequence of tests, we then compare the simulation with an actual three-dimensional shell.
We also do a comparison with a variant of the shell model recently proposed by \textcite{birsan:2021}.
Further tests show the behavior of shells with a complex topology undergoing large rotations,
and for shells that buckle.
Finally, to show that the model can properly handle non-orientable shell surfaces, we compute
equilibrium configurations of a Möbius strip and a Klein bottle subject to a volume load.

\section{Cosserat shell model with general topology}
\label{sec:continuous_model}

In this first section we introduce the Cosserat shell model.
It is a generalization of a model originally derived
in~\cite{neff_dim_reduction:2019, neff_derivation:2020, birsan:2021}.
While the original model only allowed
for shells that are diffeomorphic to a domain in $\R^2$,
the new one covers more general topologies.

The original model was introduced twice (in \cite{neff_dim_reduction:2019} and \cite{neff_derivation:2020}) using different notations. We mainly follow the tensor notation of~\cite{neff_dim_reduction:2019}, but clarify the connection to the matrix notation in \cite{neff_derivation:2020} when appropriate.
Boldface letters are used for vectors and tensors. When Greek letters are used as indices,
they always range over the set $\{1, 2\}$, and Einstein summation is used.

\subsection{The shell surface and its extrinsic geometry}
\label{sec:shell surface_geometry}

\begin{figure}
	\begin{center}
		\begin{tikzpicture} 

			\node at (1.8,4){abstract manifold $\omega$};
			\node at (2.2,1.7) {\includegraphics[height=4cm]{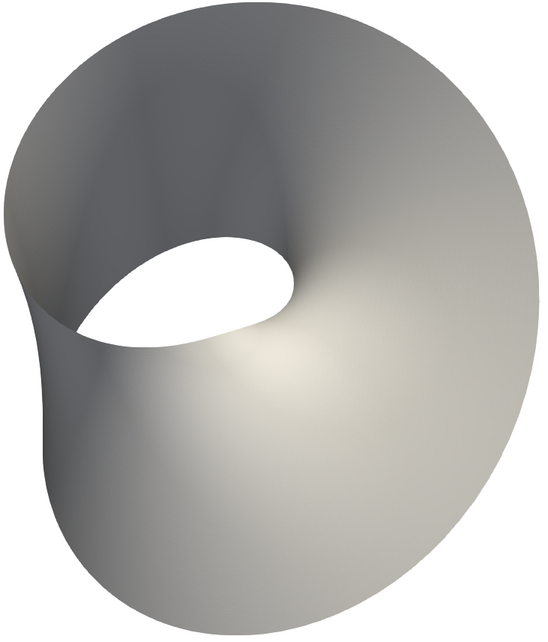}};

			\draw [black, fill=coordinateblue, opacity=0.5] plot [smooth cycle, tension=0.8] coordinates {(1,0.6) (2.2,0.5) (2.0,1.1) (1.6,1.03) (1.2,1.1)};
			\draw [black, fill=coordinatepink, opacity=0.5] plot [smooth cycle, tension=0.7] coordinates {(1.6,0.5) (2.3, 0.35) (2.8,0.2) (3,0.8) (2,1) };

			\fill[black] (2,0.7) circle(.04);
			\node at (1.85,0.7) {$\eta$};

			\node at (1.25, 0.8) {$U$};
			\draw[-stealth] (1.4,0.6) arc (353:341:7cm);
			\node at (1.5,-0.6){$\tau$};
			\draw [black, fill=coordinateblue, opacity=0.5] plot [smooth cycle] coordinates {(0,-2) (1,-2.2) (1.8,-2.3) (2.0,-1) (1.1, -1) (0.6,-0.9)};
			\fill[black] (0.5,-1.5) circle(.04);
			\node at (1.2,-1.7) {$x = \tau(\eta)$};

			\node at (2.8, 0.5) {$\widetilde U$};
			\draw[-stealth] (2.4,0.4) arc (200:213:7cm);
			\node at (3.1,-0.3){$\widetilde{\tau}$};
			\draw [black, fill=coordinatepink, opacity=0.5] plot [smooth cycle] coordinates {(2.5,-2.6) (3.4,-2.55) (4.4,-2.6) (4.2,-1.1) (2.5,-1.4) };
			\fill[black] (2.75,-1.65) circle(.04);
			\node at (3.5,-1.9) {$\widetilde x = \widetilde \tau(\eta)$};

			\draw[-stealth] (-0.2,-2.8) -- ++(1,0);
			\draw[-stealth] (-0.2,-2.8) -- ++(0,1);
			\node at (0.2,-2.5){$\R^2$};

			\node at (7.5,3) {\includegraphics[height=4cm]{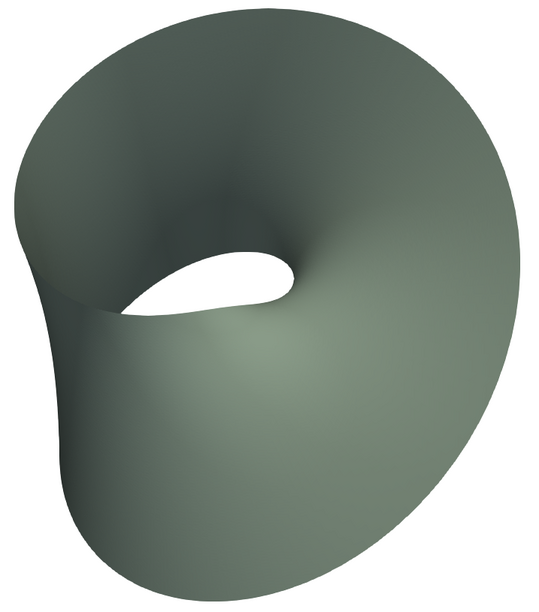}};
			\node at (10,-0.5) {\includegraphics[height=3.5cm]{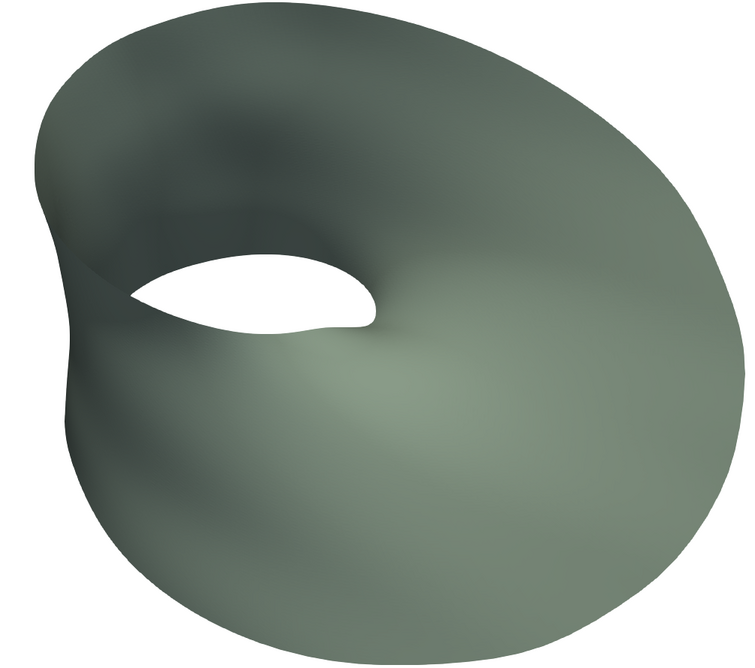}};
			\node at (9.4,4.6){$\bm_0(\omega)$};
			\node at (10,1.5){$\bm(\omega)$};

			\node at (5,-1.2){$\bm$};
			\draw[-stealth] (3.5,0.2) arc (220:290:4cm);
			\node at (4.8,3.5){$\bm_0$};
			\draw[-stealth] (3.4,3.4) arc (120:90:4.5cm);

			\draw[-stealth] (6.0,-2.8) -- ++(1,0);
			\draw[-stealth] (6.0,-2.8) -- ++(0,1);
			\draw[-stealth] (6.0,-2.8) -- ++(0.6,0.6);
			\node at (5.7,-2.5){$\R^3$};
		\end{tikzpicture}
	\end{center}
	\caption{Kinematics of the shell surface: The abstract parameter manifold~$\omega$
	 is immersed into $\R^3$ to yield configurations $\bm_0$ and $\bm$ of the shell.
	 Points $\eta$ on $\omega$ are described by local coordinate systems
	 such as $\tau : U \to \R^2$ and $\widetilde{\tau} : \widetilde{U} \to \R^2$.}
	 \label{fig:shell_surface_kinematics}
\end{figure}

The shell surface is parame\-trized by an abstract two-dimensional manifold $\omega$, possibly with boundary.
If $(U, \tau)$ is a coordinate chart, i.e., a homeomorphism from an open set $U \subset \omega$ to a  subset of $\R^2$, then we write $\eta$ for a point in $\omega$ and $x = \tau(\eta)$ for its local coordinates
(Figure~\ref{fig:shell_surface_kinematics}).
The situation of~\cite{neff_derivation:2020} is recovered when $\omega$ can be described by a single coordinate patch $U = \omega$, in which case $\omega$ can be chosen as a subset of $\R^2$, and $\tau$ can be the identity map.

Configurations of the shell surface are realized as immersions of $\omega$ into $\R^3$.
Physics require them to be even embeddings, i.e., injective, but we stick to the slightly more general case.
The stress-free reference configuration is given as
an immersion $\bm_0: \omega \to \R^3$. We assume $\bm_0$ to be a map in $H^1(\omega,\R^3)$,
with the definition of a Sobolev space on a manifold from \cite{wloka:1987}.
In addition, we require that $\bm_0$ is at least piecewise in $H^2$,
for the second fundamental tensor $\bb$ to exist almost everywhere.
Deformations of the shell surface under load are described by a second immersion $\bm:\omega \to \R^3$,
which we discuss in Section~\ref{sec:kinematics-strain-measures}. For the rest of this section we focus on the initial configuration $\bm_0$.
Derivatives appearing below are to be interpreted in the weak sense if appropriate.

The Sobolev smoothness of $\bm_0: \omega \to \R^3$ allows to define metric and
curvature measures of the immersion $\bm_0$ in a weak sense.
\begin{definition}[Covariant basis vectors]
 Let $\eta$ be a point on $\omega$. The covariant basis vectors $\ba_1, \ba_2 \in \R^3$ at
 $\eta \in \omega$ are
 \begin{equation*}
  \ba_1(\eta) \colonequals \frac{\partial \bm_0(\eta)}{\partial x_1},
  \qquad
  \ba_2(\eta) \colonequals \frac{\partial \bm_0(\eta)}{\partial x_2}.
 \end{equation*}
\end{definition}
Expressions like these are to be interpreted in local coordinates: If $(U, \tau)$ is a
coordinate chart with $\eta \in U$, then write $x = \tau(\eta)$ and define
\begin{equation*}
  \ba_1(\eta) \colonequals \frac{\partial \bm_0(\tau^{-1}(x))}{\partial x_1},
  \qquad
  \ba_2(\eta) \colonequals \frac{\partial \bm_0(\tau^{-1}(x))}{\partial x_2}.
\end{equation*}
The map $\bm_0 \circ \tau^{-1}$ that appears in these expressions corresponds to
what is called~$y_0 = \Theta(0)$ in~\cite{neff_derivation:2020}.

The covariant basis vectors at $\eta$ span the tangent space of $\bm_0(\omega) \subset \R^3$ at $\bm_0(\eta)$.
We interpret them as column vectors. In the notation of \cite{neff_derivation:2020} they represent the first two columns of
\begin{equation*}
 \nabla_x \Theta(0)
 =
 \bigg( \frac{\partial}{\partial x_1} \Theta(0) \:\Big\vert \:\frac{\partial}{\partial x_2} \Theta(0)  \: \Big\vert \: \bn_0\bigg)
 =
 (\ba_1 \:\vert\: \ba_2 \: \vert \: \bn_0) \in \R^{3 \times 3}.
\end{equation*}
The third column is the unit normal vector field
 \begin{equation}\label{def:normal}
 \bn_0
 \colonequals
 \frac{\ba_1 \times \ba_2}{\norm{\ba_1 \times \ba_2}}.
 \end{equation}
The orientation of this field depends on the choice of local coordinates.

We also need the contravariant basis vectors of the tangent spaces of $\bm_0(\omega)$.
They should be interpreted as row vectors.

\begin{definition}[Contravariant basis vectors]
 At any $\eta \in \omega$, the contravariant basis vectors are the vectors $\ba^1(\eta)$, $\ba^2(\eta) \in \R^3$ that are orthogonal
 to $\bn_0(\eta)$ and such that $\ba_\alpha(\eta) \cdot \ba^\beta(\eta) = \delta_{\alpha \beta}$,
 where $\delta_{\alpha \beta}$ is the Kronecker delta.
\end{definition}

Next, we define the first and second fundamental tensors.

\begin{definition}[Fundamental tensors]\label{def:ab}
 The first and second fundamental tensors of the immersion $\bm_0 : \omega \to \R^3$ are
 \begin{alignat*}{2}
 \ba & : \omega \to \R^{3 \times 3}
 & \qquad &
 \ba(\eta)
 \colonequals
 \frac{\partial \bm_0(\eta)}{\partial x_\alpha} \otimes \ba^\alpha(\eta)  = \ba_\alpha(\eta)  \otimes \ba^\alpha(\eta)  \\
 \shortintertext{and}
 \bb & : \omega \to \R^{3 \times 3}
 & &
 \bb(\eta)
 \colonequals
 - \frac{\partial \bn_0(\eta) }{\partial x_\alpha} \otimes \ba^\alpha(\eta),
 \end{alignat*}
 respectively.
\end{definition}
Again, these expressions should be interpreted in local coordinates $x = \tau(\eta)$.
Direct computations show that the tensors $\ba$ and $\bb$ are symmetric.
They are related to the better-known first and second fundamental forms
\begin{equation*}
\I  \colonequals (\nabla \bm_0)^T \nabla \bm_0
\qquad \text{and} \qquad
\II \colonequals -(\nabla \bm_0)^T \nabla \bn_0
\end{equation*}
via
\begin{align*}
 \ba
 & =
 (\ba^\alpha \otimes \be_\alpha) \I (\be_\beta \otimes \ba^\beta) \\
 \shortintertext{and}
 \bb
 & =
 (\ba^\alpha \otimes \be_\alpha) \II (\be_\beta \otimes \ba^\beta),
\end{align*}
where $\nabla \bm_0 \colonequals (\ba_1 | \ba_2)$ is the $3 \times 2$ matrix with columns
$\ba_1$ and $\ba_2$, and $\be_1$, $\be_2$ are the canonical basis vectors of $\R^2$.

\begin{remark}
\label{rem:coordinate_independence_I}
As the quantities $\ba$ and $\bb$ are defined with respect to particular coordinate systems,
it is important to verify that the definitions are independent of the specific choice.
This is indeed the case~\cite{anicic:2001}, but as it turns out, $\bb$ changes its sign under
orientation-reversing coordinate changes. To see both, let $\eta$ be a point on $\omega$,
and let $(U,\tau)$, $(\widetilde{U}, \widetilde{\tau})$ be two coordinate charts with $\eta \in U, \widetilde{U}$.
Call $x$ and $\widetilde{x}$ the coordinates of $\eta$ in $\tau(U) \subset \R^2$
and $\widetilde{\tau}(\widetilde{U}) \subset \R^2$, respectively.
Then we have the coordinate transformation map $\widetilde{x} \mapsto x$, i.e.,
$x = \big(\tau \circ \widetilde{\tau}^{-1}\big)(\widetilde{x})$, defined in an open neighborhood
of $\widetilde{\tau}(\eta) \in \R^2$. The Jacobian of this map is
{
\everymath{\displaystyle}
\begin{equation*}
 G
 \colonequals
 \begin{pmatrix}
  \frac{\partial (\tau \circ \widetilde{\tau}^{-1})_1(\widetilde{x})}{\partial \widetilde{x}_1} &
  \frac{\partial (\tau \circ \widetilde{\tau}^{-1})_1(\widetilde{x})}{\partial \widetilde{x}_2} \\
  \frac{\partial (\tau \circ \widetilde{\tau}^{-1})_2(\widetilde{x})}{\partial \widetilde{x}_1} &
  \frac{\partial (\tau \circ \widetilde{\tau}^{-1})_2(\widetilde{x})}{\partial \widetilde{x}_2}
 \end{pmatrix}.
\end{equation*}
}

Suppose that $\bv_1$, $\bv_2$ are two vector fields that transform like vectors.
By this we mean that when we interpret $\bv_1$, $\bv_2$ to be given locally
in coordinates $(U,\tau)$, and $\widetilde{\bv}_1$, $\widetilde{\bv}_2$
to be the same vector fields in coordinates $(\widetilde{U}, \widetilde{\tau})$, then
\begin{equation} \label{eq:transformation-vectors}
 \widetilde{\bv}_\alpha = \bv_\alpha G,
 \qquad \text{which is} \qquad
 \big( \widetilde{\bv}_1  | \widetilde{\bv}_2 \big)
 =
 \big( \bv_1 | \bv_2 \big)
 G.
\end{equation}
Similarly, let $\bw^2$, $\bw^2$ be fields that transform like covectors:
\begin{equation*}
 \begin{pmatrix} \widetilde{\bw}^1 \\[-2.2mm] \textrm{---} \\[-1.3mm] \widetilde{\bw}^2 \end{pmatrix}
 =
 G^{-1}
 \begin{pmatrix} \bw^1 \\[-2.2mm] \textrm{---} \\[-1.3mm] \bw^2 \end{pmatrix}.
\end{equation*}
Then the matrix field $\bv_\alpha \otimes \bw^\alpha = \sum_{\alpha = 1}^2 v_\alpha w^\alpha$ is independent of the coordinates on~$\omega$, because
\begin{equation*}
 \widetilde{\bv}_\alpha \otimes \widetilde{\bw}^\alpha
 =
 \big( \widetilde{\bv}_1  | \widetilde{\bv}_2 \big) \begin{pmatrix} \widetilde{\bw}^1 \\[-2.2mm] \textrm{---} \\[-1.3mm] \widetilde{\bw}^2 \end{pmatrix}
 =
 \big( \bv_1 | \bv_2 \big) G G^{-1} \begin{pmatrix} \bw^1 \\[-2.2mm] \textrm{---} \\[-1.3mm] \bw^2 \end{pmatrix}
 =
 \big( \bv_1 | \bv_2 \big) \begin{pmatrix} \bw^1 \\[-2.2mm] \textrm{---} \\[-1.3mm] \bw^2 \end{pmatrix}
 =
 \bv_\alpha \otimes \bw^\alpha.
\end{equation*}
The independence of $\ba \colonequals \ba_\alpha \otimes \ba^\alpha$ then follows because $\ba_\alpha = \partial \bm_0 / \partial x_\alpha$ transforms
like a vector, and $\ba^\alpha$ transforms like a covector.  Similarly, $\bn_0$ is invariant
under orientation-preserving changes of coordinates, and therefore $\partial \bn_0 / \partial x_\alpha$
transforms like a vector, too. This implies that $\bb \colonequals - \frac{\partial \bn_0}{\partial x_\alpha} \otimes \ba^\alpha$
is invariant under orientation-preserving coordinate changes.
However, if the orientation is reversed, then $\bn_0$ changes its sign, and so does $\bb$.
\end{remark}

The hyperelastic energy of the Cosserat shell model depends on the classical extrinsic curvature measures
(Gauß curvature and mean curvature) of the immersed shell surface $\bm_0(\omega)$.
These can be computed conveniently from the second fundamental tensor:
\begin{lemma}[{Curvature \cite[Section~3.1]{neff_dim_reduction:2019}}]\label{lem:curvature}
 Let $\kappa_1$ and $\kappa_2$ be the principal curvatures of $\bm_0(\omega)$, wherever defined.
 Then the Gauss curvature $K \colonequals \kappa_1 \cdot \kappa_2$ and the mean curvature
 $H \colonequals \frac{1}{2}(\kappa_1 + \kappa_2)$ can be expressed as
\begin{equation*}
 K = \det(\bb),
 \qquad
 H = \frac{1}{2}\tr(\bb).
\end{equation*}
\end{lemma}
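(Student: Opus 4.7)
The plan is to identify the tensor $\bb$ with the classical Weingarten (shape) operator of the surface $\bm_0(\omega)$, whose eigenvalues are by definition the principal curvatures $\kappa_1$ and $\kappa_2$.

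First, I would test $\bb$ against the covariant basis vectors. Using the duality $\ba^\alpha \cdot \ba_\gamma = \delta^\alpha_\gamma$, a direct computation gives
\begin{equation*}
 \bb\, \ba_\gamma
 = -\frac{\partial \bn_0}{\partial x_\alpha}\,\big(\ba^\alpha \cdot \ba_\gamma\big)
 = -\frac{\partial \bn_0}{\partial x_\gamma},
\end{equation*}
which is precisely the Weingarten map applied to $\ba_\gamma$. Next, using that $\ba^\alpha$ is orthogonal to $\bn_0$, one gets
\begin{equation*}
 \bb\, \bn_0
 = -\frac{\partial \bn_0}{\partial x_\alpha}\,\big(\ba^\alpha \cdot \bn_0\big)
 = 0,
\end{equation*}
so $\bn_0$ is an eigenvector of $\bb$ with eigenvalue $0$.

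With these two identities in hand, I would then invoke the standard characterization of the principal curvatures as the eigenvalues of the Weingarten operator on the two-dimensional tangent space spanned by $\ba_1, \ba_2$. Combined with $\bb\,\bn_0 = 0$, this shows that the spectrum of $\bb$, viewed as an endomorphism of $\R^3$, is $\{\kappa_1,\kappa_2,0\}$. The trace formula is then immediate: $\tr(\bb) = \kappa_1 + \kappa_2 = 2H$.

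The formula $K = \det(\bb)$ is slightly more delicate, because the ambient $3 \times 3$ determinant of $\bb$ vanishes identically on account of the zero eigenvalue along $\bn_0$. The expected main obstacle is therefore only notational: one has to read $\det(\bb)$ as the surface determinant, i.e.\ the determinant of the restriction of $\bb$ to the tangent plane (equivalently, the second principal invariant $\tfrac{1}{2}\big((\tr\bb)^2 - \tr(\bb^2)\big)$), which is the product of the nonzero eigenvalues and hence equals $\kappa_1\kappa_2 = K$. This convention is consistent with the surface tensor calculus underlying the shell formulation, where $\bb$ is intrinsically a tensor on the tangent plane extended trivially in the normal direction, and the claimed identities then follow directly from the eigenvalue analysis above.
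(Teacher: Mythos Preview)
The paper does not actually give its own proof of this lemma; it simply cites \cite[Section~3.1]{neff_dim_reduction:2019}. Your argument is correct and is the standard one: you identify $\bb$ with the Weingarten operator on the tangent plane, extended trivially by zero along $\bn_0$, so that the spectrum of the $3\times 3$ matrix is $\{\kappa_1,\kappa_2,0\}$, from which the trace identity is immediate. Your caveat about $\det(\bb)$ is exactly right and matches the convention in the cited source: since the ambient $3\times 3$ determinant vanishes, $\det(\bb)$ is to be read as the surface determinant (equivalently the second principal invariant $\tfrac{1}{2}\big((\tr\bb)^2-\tr(\bb^2)\big)$), which equals $\kappa_1\kappa_2=K$.
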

Note that $K$ is independent of the orientation, but $H$ is not.

Finally, the hyperelastic energy involves the alternating pseudo-tensor of the immersed surface.
\begin{definition}[Surface alternating pseudo-tensor]\label{def:alternating_symbol}
 The alternating pseudo-tensor of the immersed~surface~$\bm_0(\omega)$ is
 \begin{align*}
  \bc : \omega \to \R^{3 \times 3},
  \qquad
  \bc
  & \colonequals
  \frac{1}{\sqrt{\det\big((\nabla \bm_0)^T \nabla \bm_0 \big)}} \big( \ba_1 \otimes \ba_2 - \ba_2 \otimes \ba_1 \big) \\
  & \;=
  \sqrt{\det\big((\nabla \bm_0)^T \nabla \bm_0 \big)} \big( \ba^1 \otimes \ba^2 - \ba^2 \otimes \ba^1 \big).
 \end{align*}
\end{definition}

Direct computation shows that the matrix $\bc$ is skew-symmetric, and that it is
indeed a pseudo-tensor, because, if $\widetilde{\bc}$ is a representation with respect to a second coordinate chart,
\begin{align*}
 \widetilde{\bc}
 & =
 \frac{1}{\sqrt{\det\big((\nabla \widetilde{\bm}_0)^T \nabla \widetilde{\bm}_0 \big)}}
    \big( \widetilde{\ba}_1 \otimes \widetilde{\ba}_2 - \widetilde{\ba}_2 \otimes \widetilde{\ba}_1 \big) \\
 & =
 \frac{\det G}{\sqrt{(\det G)^2 \det\big((\nabla \bm_0)^T \nabla \bm_0 \big)}} \big( \ba_1 \otimes \ba_2 - \ba_2 \otimes \ba_1 \big) \\
 & =
 \operatorname{sgn}({\det G}) \cdot \bc.
\end{align*}
Also, it is a linear complex structure on each tangent space, because $\bc^2 = -\ba$
and $\ba$ is the identity on the tangent space.

\begin{remark} [Relationship to \cite{neff_derivation:2020}]
 In the language of~\cite[Section\,3.2]{neff_derivation:2020}, the tensors $\ba$, $\bb$, and $\bc$
 are defined in terms of the map
 \begin{equation*}
  \nabla_x \Theta(0)
  =
  (\nabla y_0 \: | \: n_0),
 \end{equation*}
 and read
 \begin{align*}
  A_{y_0}
  & \colonequals
  (\nabla y_0 | 0) [\nabla_x \Theta(0)]^{-1} \\
  B_{y_0}
  & \colonequals
  -(\nabla n_0 | 0) [\nabla_x \Theta(0)]^{-1} \\
  C_{y_0}
  & \colonequals
  \det (\nabla_x \Theta(0)) [\nabla_x \Theta(0)]^{-T} \textstyle
  \begin{psmallmatrix} 0 & 1 & 0 \\ -1 & 0 & 0 \\ 0 & 0 & 0 \end{psmallmatrix}
  [\nabla_x \Theta(0)]^{-1}.
 \end{align*}
 Since
 \begin{equation*}
    \nabla_x \Theta(0) = (\nabla y_0 | n_0) = (\ba_1 \:\vert\: \ba_2 \: \vert \: \bn_0)
  \qquad \text{and} \qquad
  [\nabla_x \Theta(0)]^{-1}
  =
  \begin{psmallmatrix}
  \ba^1 \\[-0.2mm] \textrm{---} \\[-0.3mm] \ba^2 \\[-0.2mm] \textrm{---} \\[-0.3mm] \bn_0
  \end{psmallmatrix}
 \end{equation*}
 in our notation, we do get $\ba = A_{y_0}$, $\bb = B_{y_0}$, and $\bc = C_{y_0}$.
\end{remark}

\subsection{Kinematics and strain measures}
\label{sec:kinematics-strain-measures}
In Cosserat theory, the configuration of a shell is given by a deformation of the shell surface
together with an independent field of rotations, called the \emph{microrotation field}.
The shell surface deformation is described by a function $\bm : \omega \to \R^3$.
Physical models would require injectivity of~$\bm$, but
we explicitly allow $\bm$ to be
non-injective, and therewith accommodate objects like the Klein bottle of Chapter~\ref{sec:non-orientable_shell surfaces}.
The microrotation field $\Qe : \omega \to \SOdrei$ models rotations
of infinitesimal parts of the shell (see Chapter~\ref{sec:interpretation} below
for details on the interpretation).  In the reference configuration, $\Qe$ will be
the identity matrix field.

At any point $\eta \in \omega$ around which $\bm_0(\omega)$ is sufficiently smooth
we define strain measures for the shell surface deformation and the
microrotation field~\cite{pietraszkiewicz_eremeyev:2009}.
The geometry of the shell surface $\bm_0(\omega)$ enters these expressions
in form of the metric tensor~$\ba:\omega \to \R^{3 \times 3}$ of Chapter~\ref{sec:shell surface_geometry}.

\begin{definition}[Strain tensors]
\label{def:strain_tensors}
 The shell strain tensor is
 \begin{align*}
  \Ee & : \omega \to \R^{3 \times 3}
  \qquad
  \Ee (\eta) \colonequals \Qe^T(\eta) \frac{\partial \bm(\eta)}{\partial x_\alpha} \otimes \ba^\alpha(\eta) - \ba(\eta), \\
 \shortintertext{and the shell bending--curvature tensor is}
  \Ke & : \omega \to \R^{3 \times 3}
  \qquad
  \Ke(\eta)
  \colonequals
  \axl\Big(\Qe^T(\eta) \frac{\partial \Qe(\eta)}{\partial x_\alpha}\Big) \otimes \ba^\alpha(\eta).
 \end{align*}
\end{definition}
Here, the map $\axl(\cdot) : \mathfrak{so}(3) \to \R^3$ computes the axial vector of a given skew-symmetric $3 \times 3$ matrix
\begin{equation*}
 \axl(A) \colonequals (A_{23}, \; A_{31}, \; A_{12})^T \in \R^3
 \qquad
 \text{with $A = (A_{ij}) \in \R^{3 \times 3}$}.
\end{equation*}
The matrix $\Qe^T \frac{\partial \Qe}{\partial x_\alpha}$ really is skew symmetric,
because $\frac{\partial (\bQ^T \bQ)}{\partial x_\alpha}= 0$ for any field
of orthogonal matrices $\bQ$.
The partial derivatives in the expressions for $\Ee$ and $\Ke$ are to be interpreted
with respect to an arbitrary coordinate chart $(U, \tau)$ around $\eta$.
The image $\tau(U) \subset \R^2$ of this chart then corresponds to the flat ``fictitious'' domain~$\omega$
of~\cite{neff_derivation:2020}. Indeed, in view of equation (4.36) of \cite{neff_derivation:2020},
the quantities $\Ee$ and $\Ke$ correspond to the strains
\begin{align*}
 \mathcal{E}_{m,s}
 &\colonequals
 \overline{Q}_{e,s}^T \left(\nabla m \:\big\vert\: \overline{Q}_{e,s} \nabla_x \Theta(0) e_3\right)
        \left[\nabla_x \Theta(0)\right]^{-1} - \identity_3 \\
 \mathcal{K}_{e,s}
 &\colonequals
 \left(\axl\big(\overline{Q}_{e,s}^T \partial_{x_1}\overline{Q}_{e,s}\big)\:\big\vert\: \axl\big(\overline{Q}_{e,s}^T \partial_{x_2}\overline{Q}_{e,s}\big) \:\big\vert\:\:  0\:\right) \left[\nabla_x \Theta(0)\right]^{-1}
\end{align*}
defined there.  In these expressions, $\Theta(0)$ corresponds to $\bm_0 \circ \tau^{-1}$,
and its inverse Jacobian is $\left[\nabla_x \Theta(0)\right]^{-1} = \begin{psmallmatrix} \ba^1 \\[-0.2mm] \textrm{---} \\[-0.3mm] \ba^2 \\[-0.2mm] \textrm{---} \\[-0.3mm] \bn_0 \end{psmallmatrix}$.
The tensor $\overline{Q}_{e,s}$ (defined in equation~(4.2) of~\cite{neff_derivation:2020})
corresponds to our $\bQ_{e}$.

\begin{remark}
\label{rem:coordinate_independence_II}
Note that the quantities $\Ee$ and $\Ke$ are independent of the coordinates on $\omega$.
Continuing the reasoning of Remark~\ref{rem:coordinate_independence_I},
$\Qe^T \frac{\partial \bm}{\partial x_\alpha}$ transforms like a vector, and therefore
\begin{equation*}
 \Ee = \Big( \Qe^T \frac{\partial \bm}{\partial x_\alpha} - \frac{\partial \bm_0}{\partial x_\alpha} \Big)
       \otimes \ba^\alpha
\end{equation*}
is independent of the coordinates. To show that
$\Ke \colonequals \axl\Big(\Qe^T \frac{\partial \Qe}{\partial x_\alpha}\Big) \otimes \ba^\alpha$
is coordinate-independent, it is sufficient to show that $\axl\Big(\Qe^T \frac{\partial \Qe}{\partial x_\alpha}\Big)$
transforms like a vector, i.e., that
\begin{equation*}
 \Big( \axl\Big(\widetilde{\Qe}^T \frac{\partial \widetilde{\Qe}}{\partial x_1}\Big)
       \Big|
       \axl\Big(\widetilde{\Qe}^T \frac{\partial \widetilde{\Qe}}{\partial x_2}\Big) \Big)
 =
 \Big( \axl\Big(\Qe^T \frac{\partial \Qe}{\partial x_1}\Big) \Big| \axl\Big(\Qe^T \frac{\partial \Qe}{\partial x_2}\Big) \Big)
 G.
\end{equation*}
This follows by direct computation.

\end{remark}

\subsection{Interpretation and reconstruction}
\label{sec:interpretation}
The constructions of the two previous sections have been shown to be invariant under orientation-preserving coordinate changes. However, some quantities like the normal vector $\bn_0$ and the second fundamental tensor $\bb$ change their signs under changes of coordinates that switch the orientation. To show that the model is suitable nevertheless to represent actual thin elastic objects we have to show that the model is independent of the choice of orientation. By this we mean that the three-dimensional reconstruction that can be obtained from the shell model is independent of the choice of coordinates and orientation. As a by-product we obtain the result that the shell model is meaningful even for non-orientable parameter surfaces $\omega$.

We start with a discussion of the meaning of the microrotation field~$\Qe$. This field is typically interpreted as a transversal shear and local drilling
of the shell.  More formally, we equip the initial reference surface $\bm_0(\omega)$ with an initial microrotation field $\bQ_0$ that captures its local orientation.

\begin{definition}[Reference microrotation]\label{def:q0}
 The reference microrotation of $\bm_0(\omega)$ is given by
 the orthogonal part of the polar decomposition of the matrix $(\nabla \bm_0 \:\vert\: \bn_0) \in \R^{3 \times 3}$
 \begin{equation*}
  \bQ_0 \colonequals \operatorname{polar}(\nabla \bm_0 \:\vert\: \bn_0).
 \end{equation*}
\end{definition}
In a sense, the orientation $\bQ_0$ obtained by this is the closest orthogonal approximation to the deformation gradient $(\nabla \bm_0 \:\vert \: \bn_0)$ \cite{fischle_neff:2017}.
The columns $\bd^0_1$, $\bd^0_2$, $\bd^0_3$ of $\bQ_0$ are called directors, and they
form an orthonormal frame.
By the particular construction of $(\nabla \bm_0 \:\vert\: \bn_0)$ we
further know that $\det(\nabla \bm_0 \:\vert\: \bn_0) > 0$,
and hence $\det \bQ_0 = 1$, i.e., $\bQ_0 \in \SOdrei$ everywhere.
Similar local frames appear in many other shell models,
e.g., \cite{betsch_menzel_stein:1998,hughes_liu:1981,pimenta_campello:2009}.

Even though $\bQ_0$ is presented in Definition~\ref{def:q0} as a quantity defined on all of $\omega$,
it is actually local: Both terms $\nabla\bm_0$ and $\bn_0$ imply the choice of a local coordinate system,
and $\bQ_0 \colonequals \operatorname{polar}(\nabla \bm_0 \:\vert \: \bn_0)$ does depend on this choice.
This is not an issue in~\cite{neff_derivation:2020}, where the model is represented
with respect to one single fixed coordinate system only. However, from the more general viewpoint
considered here we have to conclude that $\bQ_0$ has no independent physical meaning.
It only serves to allow an interpretation of $\Qe$ as a change of the local orientation.
An exception is the transverse director~$\bd_3^0$, for which
it is shown in \cite[Chapter~3.2]{neff_dim_reduction:2019} that
\begin{equation*}
 \bd^0_3 = \bn_0.
\end{equation*}
Furthermore, since the reference surface $\bm_0:\omega \to \R^3$ is only required to be in $H^1$
and piecewise in $H^2$, it may, for example, have kinks, across which the deformation gradient $\nabla \bm_0$ and normal $\bn_0$ are discontinuous. Consequently, $\bQ_0$ also cannot be expected to be continuous even in a single coordinate chart.
As it turns out, though, $\bQ_0$ does not appear in the reconstruction formula
or in any of the proofs of existence of solutions.%
\footnote{It does appear in the existence proof in \cite{neff_existence:2020},
but on close inspection that proof turns out to be independent of $\bQ_0$. See Section~\ref{sec:existence}.}
Sign flips of $\bn_0$ on non-orientable surfaces $\omega$ will be accounted for
in the reconstruction formula below.

The microrotation under load $\Qe$ is instead interpreted as acting on the initial microrotation field $\bQ_0$,
to yield
the total microrotation $\ovr \colonequals \Qe \bQ_0$.
Unlike $\bm$, therefore, $\Qe$ is a relative quantity.
Of particular interest is the rotated transverse director
\begin{equation*}
 \bd_3 \colonequals \Qe \bd_3^0 = \Qe \bn_0,
\end{equation*}
which appears in the reconstruction formula below.
Note that $\bd_3$ is not necessarily orthogonal to the deformed shell surface $\bm(\omega)$ anymore.
Also, like $\bQ_0$, the total microrotation~$\ovr$ is not usually continuous.
This does not pose any problem, because only the three-dimensional reconstruction is of
physical relevance.

We now show how a three-dimensional thin shell can be reconstructed from a configuration
of the Cosserat shell model. The construction generalizes the approach of~\cite{neff_derivation:2020}
to general parameter domains $\omega$. Locally, for a fixed arbitrary coordinate chart,
the construction of~\cite{neff_derivation:2020} is recovered. This justifies our model
even though we have not directly derived it from a three-dimensional model.
Presumably such a derivation is possible if $\bm_0(\omega)$ is an embedding, and it would follow the steps in \cite{neff_dim_reduction:2019, neff_derivation:2020} locally in coordinate charts.

In \cite{neff_dim_reduction:2019, neff_derivation:2020} the two-dimensional flat parameter domain $\omega$
is extended by Cartesian multiplication to a thin three-dimensional parameter domain
$\Omega_h \colonequals \omega \times (- \tfrac{h}{2}, \tfrac{h}{2})$.
To generalize this to the case of the abstract two-dimensional parameter surfaces~$\omega$
considered here, the corresponding three-dimensional domain is a tubular neighborhood of $\omega$:
Let $(\mathcal{N},\pi, \omega)$ be the normal bundle
over $\omega$ with respect to the immersion $\bm_0$~\cite{tom_dieck:2008}.
It is a three-dimensional vector bundle, and it is orientable as a three-dimensional manifold.%
\footnote{Thanks to Andreas Thom (Technische Universität Dresden) for this result.}
Its fibres are isomorphic to the one-dimensional vector space~$\R$, and we construct it
such that the structure group is $O(1) = \{-1,1\}$.
Define a subbundle $(\mathcal{N}_h,\pi,\omega)$ such that the fiber $\pi^{-1}(\eta)$
over each point $\eta \in \omega$ is an open set that contains $0$.
Local trivializations can then be chosen to take the form
$\tau(U) \times \big(-\frac{h}{2}, \frac{h}{2}\big)$, where $(\tau, U)$ is a
coordinate chart of $\omega$ and hence $\tau(U)$ is an open set in $\R^2$.
These local trivializations correspond to the single set $\Omega_h$ employed in \cite{neff_dim_reduction:2019, neff_derivation:2020}.

The reconstruction of a three-dimensional deformed configuration from a two-dimensional
Cosserat shell is a map $\varphi_s : \mathcal{N}_h \to \R^3$, the immersion of a
three-dimensional object into $\R^3$.
In coordinates, the reconstruction has the form
\begin{align}
\label{eq:reconstruction}
\varphi_s &:  \:\:\ \tau(U) \times (-\tfrac{h}{2}, \tfrac{h}{2}) \to \R^3
\\
\nonumber
\varphi_s(x_1,x_2,x_3) & \colonequals \: \bm(x_1, x_2) + x_3 \rho_m(x_1,x_2)\bd_3(x_1, x_2) + \tfrac{1}{2} x_3^2 \rho_b (x_1,x_2)\bd_3(x_1, x_2),
\end{align}
which is the formula from \cite{neff_dim_reduction:2019, neff_derivation:2020}, but is now
interpreted as an expression of local coordinates $x = (x_1,x_2) \in \tau(U)$ of a point $\eta \in \omega$.
(In an abuse of notation we have omitted various occurrences of $\tau^{-1}$ here.)
Additionally, $x_3$ is the coordinate of the interval $(-\tfrac{h}{2},\tfrac{h}{2})$.
The coefficient functions are
\begin{align*}
\rho_m & \colonequals 1 - \frac{\lambda}{\lambda + 2 \mu} (\tr \Ee) \\
\shortintertext{and}
\rho_b & \colonequals - \frac{\lambda}{\lambda + 2 \mu} \big(\tr (\Ee \bb + \bc \Ke)\big),
\end{align*}
where $\lambda$ and $\mu$ are the Lamé parameters.

\begin{lemma}
 The reconstruction~\eqref{eq:reconstruction} is independent of the coordinates on $\omega$.
 It is well-defined even if $\omega$ is not orientable.
\end{lemma}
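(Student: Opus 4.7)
The plan is to verify the claim directly from formula~\eqref{eq:reconstruction}, splitting into the two cases of orientation-preserving and orientation-reversing coordinate changes on $\omega$. The non-orientable case then follows because the construction of the normal bundle $(\mathcal{N}_h, \pi, \omega)$ with structure group $O(1)$ is intrinsically global, and its transition functions across any overlap are precisely the orientation-reversing changes handled in the second case.

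For orientation-preserving chart changes, the argument is immediate. By Remark~\ref{rem:coordinate_independence_I} the tensors $\ba$, $\bb$, $\bc$, and the normal $\bn_0$ are invariant; by Remark~\ref{rem:coordinate_independence_II} the strain measures $\Ee$ and $\Ke$ are invariant. The maps $\bm : \omega \to \R^3$ and $\Qe : \omega \to \SOdrei$ are defined pointwise on $\omega$ and hence trivially invariant, so $\bd_3 = \Qe \bn_0$, $\rho_m$, and $\rho_b$ are invariant as well. The fiber coordinate $x_3$ is also unchanged, since an orientation-preserving trivialization change of the $O(1)$-bundle $\mathcal{N}_h$ acts by $+1$. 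Each summand of~\eqref{eq:reconstruction} is therefore unchanged.

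The substantive case is an orientation-reversing coordinate change. By Remark~\ref{rem:coordinate_independence_I}, $\bn_0 \to -\bn_0$, $\bb \to -\bb$, and $\bc \to -\bc$, while $\ba$, $\Ee$, $\Ke$, $\bm$, and $\Qe$ are still unchanged. Consequently $\bd_3 = \Qe \bn_0 \to -\bd_3$; the scalar $\rho_m = 1 - \tfrac{\lambda}{\lambda + 2\mu} \tr \Ee$ is unchanged; and $\rho_b$ picks up an overall sign, since $\tr(\Ee \bb + \bc \Ke) \to -\tr(\Ee \bb + \bc \Ke)$. On the bundle side, the transition of $\mathcal{N}_h$ with structure group $O(1) = \{-1,1\}$ contributes exactly the flip $x_3 \to -x_3$. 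The three terms of~\eqref{eq:reconstruction} then transform as $\bm \to \bm$, $x_3 \rho_m \bd_3 \to (-x_3)\,\rho_m\,(-\bd_3) = x_3 \rho_m \bd_3$, and $\tfrac{1}{2} x_3^2 \rho_b \bd_3 \to \tfrac{1}{2} x_3^2\,(-\rho_b)\,(-\bd_3) = \tfrac{1}{2} x_3^2 \rho_b \bd_3$. Every sign flip is matched by another, and the sum is invariant. Since this covers all possible transition functions of $\mathcal{N}_h$, the locally defined maps glue to a globally well-defined map $\varphi_s : \mathcal{N}_h \to \R^3$, even when $\omega$ itself is non-orientable.

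I expect the main obstacle to be not any single computation but the careful bookkeeping of what does and does not transform nontrivially: one must keep in mind that $\Qe$ is a pointwise $\SOdrei$-valued object on $\omega$ (and hence chart-independent) even though $\bn_0$ is not, and that the fiber coordinate $x_3$ is not a passive parameter but transforms via the $O(1)$ structure of the normal bundle. Once those conventions are pinned down, the proof reduces to observing that the sign flips come in exactly the pairs $(x_3, \bd_3)$ and $(\bb, \bn_0)$ (the latter through $\bd_3$) needed to cancel each other in every term of~\eqref{eq:reconstruction}.
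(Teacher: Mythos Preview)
Your proof is correct and follows essentially the same approach as the paper's: a case split into orientation-preserving and orientation-reversing base coordinate changes, tracking the sign flips of $\bn_0$, $\bb$, $\bc$, $\bd_3$, $\rho_b$, and $x_3$, and observing that they cancel pairwise in each term of~\eqref{eq:reconstruction}. The only cosmetic difference is that the paper justifies the linkage ``orientation-reversing on the base $\Leftrightarrow$ $x_3 \to -x_3$'' by invoking the orientability of the total space $\mathcal{N}_h$ as a three-manifold, whereas you obtain it directly from the fact that the trivialization of the normal bundle is built from $\bn_0$, whose sign behavior under chart changes is already known from Remark~\ref{rem:coordinate_independence_I}.
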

\begin{proof}
 By Remarks~\ref{rem:coordinate_independence_I} and~\ref{rem:coordinate_independence_II},
 the scalar-valued coefficient functions $\rho_m :\tau(U) \to \R$ and $\rho_b:\tau(U) \to \R$ are independent of the
 choice of coordinates on $\omega$; only $\rho_b$ flips its sign under orientation-reversing
 coordinate changes. To see the independence of $\varphi_s$,
 let $U$ and $\widetilde{U}$ be two overlapping coordinate charts of $\omega$.
 We call the corresponding local coordinates $x = (x_1, x_2)$ and $\widetilde x = (\widetilde{x}_1, \widetilde{x}_2)$, respectively,
 and extend them to coordinates $(x_1,x_2,x_3)$ and
 $(\widetilde{x}_1, \widetilde{x}_2, \widetilde{x}_3)$ of the corresponding locals trivializations
 $\tau(U) \times (-\tfrac{h}{2},\tfrac{h}{2})$ and
 $\widetilde{\tau}(\widetilde{U}) \times (-\tfrac{h}{2},\tfrac{h}{2})$, respectively,
 of the normal bundle $(\mathcal{N}_h, \pi, \omega)$.
 As the normal bundle is orientable as a manifold, we only need to consider orientation-preserving coordinate changes $(\widetilde{x}_1, \widetilde{x}_2,\widetilde{x}_3) \mapsto (x_1,x_2,x_3)$.
 Since these are coordinate changes of a (truncated) rank-1 vector bundle
 we can look at transformations $(x_1,x_2) \mapsto (\widetilde{x}_1,\widetilde{x}_2)$ and
 $x_3 \mapsto \widetilde{x}_3$ separately, and since we have used $O(1)$ as the structure group
 the only possible transformations for the latter are $\widetilde{x}_3 = x_3$ and $\widetilde{x}_3 = -x_3$.
 Regarding the overall orientation, we therefore have to distinguish only two cases:
 \begin{enumerate}
  \item $(\widetilde{x}_1, \widetilde{x}_2) \mapsto (x_1,x_2)$ preserves orientation.  Then $\widetilde{x}_3 = x_3$.
  \item $(\widetilde{x}_1, \widetilde{x}_2) \mapsto (x_1,x_2)$ inverts orientation. Then $\widetilde{x}_3$ must equal $-x_3$
   to make $(x_1, x_2, x_3) \mapsto (\widetilde{x}_1, \widetilde{x}_2, \widetilde{x}_3)$
   orientation-preserving again.
 \end{enumerate}
 In the first case, $\bd_3$ is invariant under the change of coordinates, and so is
 the entire reconstruction formula $\varphi_s$.
 In the second case, $\bd_3$ changes its sign. However, so does $x_3$, and the middle addend of~\eqref{eq:reconstruction}
 remains invariant. To see invariance of the last term note that $x_3^2$ does not change
 sign but $\rho_b$ does, and therefore the sign change of $\bd_3$ is compensated for again.
\end{proof}
The reconstruction is continuous only if $\bm:\omega\to \R^3$ is continuously differentiable, because otherwise $\bd_3$
may be discontinuous.

\begin{remark}
 The reconstruction formula~\eqref{eq:reconstruction} given here corresponds to the shell energy functional presented in
 the following chapter. For the variant of that energy discussed in Remark~\ref{rem:birsan_energy},
 a slightly different reconstruction formula has to be used. The details are given in~\cite{birsan:2021}.
\end{remark}

\subsection{Hyperelastic shell energy functional}
\label{sec:shell_energy_functional}

\begin{figure}
 \begin{center}
 	\begin{tikzpicture}
 		\node at (2.2,3.4) {\includegraphics[height=3.5cm]{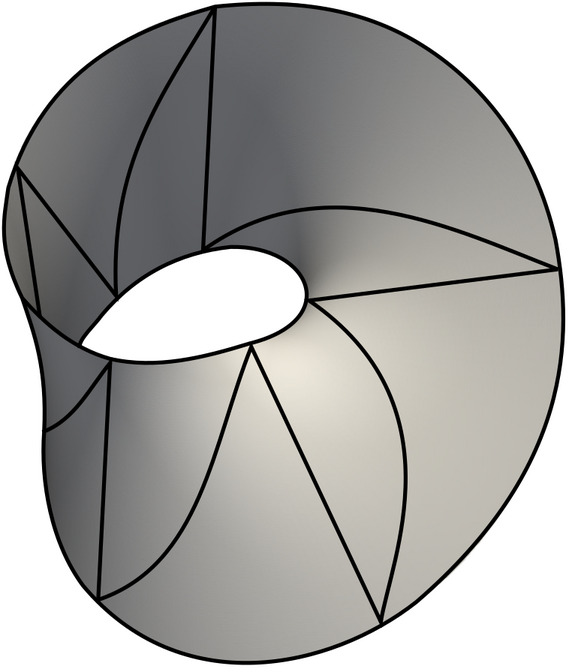}};
 		\node at (1.8,5.5){parameter manifold $\omega$};

 		\begin{scope}
 		\node at (2.2,-2.2){simplicial complex $\Sigma$};

		\coordinate (v0) at (0.6, 0.3);
		\coordinate (v1) at (1.8, 1.0);
		\coordinate (v2) at (3.7, 0.1);
		\coordinate (v3) at (2.7, -1.7);
		\coordinate (v4) at (1.2, -1.6);
		\coordinate (v5) at (0.8, -0.8);
		\coordinate (v6) at (1.3, -0.2);
		\coordinate (v7) at (1.7, -0.0);
		\coordinate (v8) at (2.4, -0.3);
		\coordinate (v9) at (2.0, -0.5);
		\coordinate (v10) at (1.2, -0.5);

		\draw[line join=round, fill=gray!20, thick] (v0) -- (v5) -- (v6) -- cycle;
		\draw[line join=round, fill=gray!20, thick] (v0) -- (v1) -- (v6) -- cycle;
		\draw[line join=round, fill=gray!20, thick] (v1) -- (v7) -- (v6) -- cycle;
		\draw[line join=round, fill=gray!20, thick] (v1) -- (v2) -- (v7) -- cycle;
		\draw[line join=round, fill=gray!20, thick] (v2) -- (v8) -- (v7) -- cycle;
		\draw[line join=round, fill=gray!20, thick] (v2) -- (v3) -- (v8) -- cycle;
		\draw[line join=round, fill=gray!20, thick] (v3) -- (v9) -- (v8) -- cycle;
		\draw[line join=round, fill=gray!20, thick] (v3) -- (v4) -- (v9) -- cycle;
		\draw[line join=round, fill=gray!20, thick] (v4) -- (v10) -- (v9) -- cycle;
		\draw[line join=round, fill=gray!20, thick] (v4) -- (v5) -- (v10) -- cycle;
		\draw[line join=round, fill=gray!20, thick] (v10) -- (v5) -- (v0) -- cycle;
		\end{scope}

		\filldraw[draw=black, fill=gray!45] (4.5,-2.0) -- ++(1.25,0.0) -- ++(-1.25,1.25) -- cycle;
		\node at (4.9,-1.6) {$\Tref$};

		\node at (7.5,3.4) {\includegraphics[height=3.7cm]{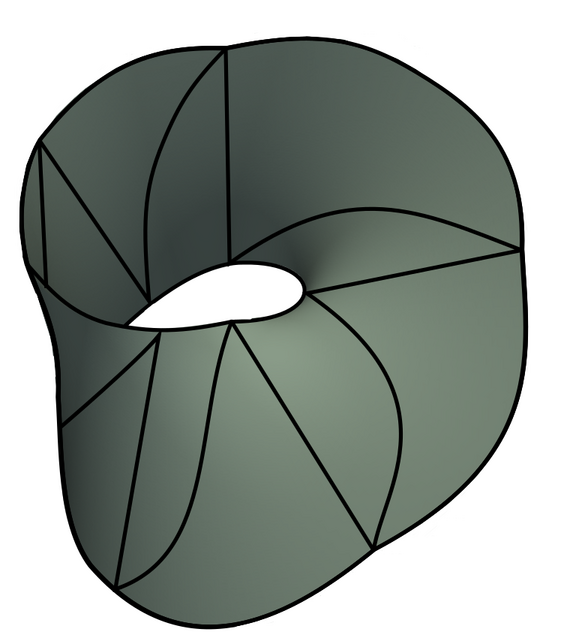}};
		\node at (9.5,4.6){$\bm_0(\omega)$};

		\node at (9.2,-0.5) {\includegraphics[height=3.3cm]{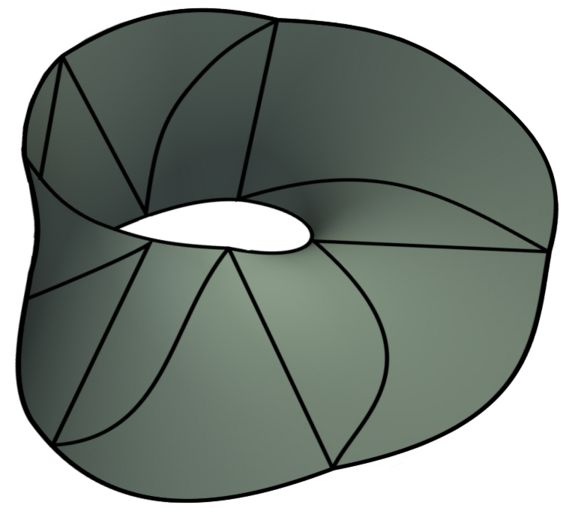}};
		\node at (9.3,1.4){$\bm(\omega)$};

		\draw[-stealth] (3.3,3.2) .. controls ++(1,0.4)  .. node[below=4pt] {$\bm_0$} (5.7,3.7);

		\draw[-stealth] (3.3,2.8) .. controls ++(1,-1) and (5,0.7) .. node[above=4pt] {$\bm$} (7.5,0.7);

		\draw[-stealth] (1.0,0.8) .. controls ++(-0.1,0.3) and (0.9, 1.5) .. node[left] {$\mathcal{G}$} (1.0,2.0);

		\draw[-stealth] (4.3, -1.7) .. controls ++(-1.1,-0.3) and (2.7, -1.8) .. node[near start, above] {$\tau_{T_1}$} (2.0,-1.4);
		\draw[-stealth] (4.3, -1.4) .. controls ++(-0.5,0.3)  .. node[near start, above] {$\tau_{T_2}$} (3.0,-0.8);

	\end{tikzpicture}
 \end{center}
 \caption{Triangulation of the abstract parameter manifold $\omega$, its triangulation by a simplicial
  complex $\Sigma$, and its immersions into $\R^3$}
 \label{fig:triangulation}
\end{figure}

We assume that the shell behavior can be described by a hyperelastic material.
For the case of a single coordinate patch, the authors of \cite{neff_dim_reduction:2019}
and~\cite{neff_derivation:2020} derived an
energy functional of the form
\begin{equation*}
 I(\bm, \Qe)
 \colonequals
 \int_{\omega} \Big[W_\text{memb}(\Ee, \Ke) + W_\text{bend}(\Ke) \Big] d\omega - \Pi_\text{ext}(\bm, \Qe)
\end{equation*}
by dimensional reduction of a three-dimensional Cosserat material.
While in these works the parameter domain $\omega$ was a flat domain in $\R^2$, it is now an abstract two-dimensional manifold.
To integrate the energy density over this manifold, we eschew the traditional approach using a partition of unity on $\omega$.
Rather, anticipating the numerical approximation of shell problems by the finite element method, we cover $\omega$ by a triangulation.  In the following definition, $\Sigma$ is a pure two-dimensional simplicial complex.
By $\abs{\Sigma}$ we denote the polyhedron of $\Sigma$, i.e., the union of all of its simplices.

\begin{definition}[Triangulation~\cite{thurston:1997}]\label{def:triangulation}
A triangulation of $\omega$ is a simplicial complex~$\Sigma$, together with
a homeomorphism $\mathcal G : \abs{\Sigma} \to \omega$.
We call $\mathcal{T}$ the set of triangles of~$\omega$, i.e., the set of the images
of the two-dimensional simplices of $\Sigma$ under the map $\abs{\Sigma} \to \omega$.
\end{definition}

For each two-dimensional simplex $S$ of $\Sigma$, there is an affine homeomorphism
from a fixed open triangle $\Tref \subset \R^2$ to $S$, unique up to permutations
of the triangle vertices. In finite element parlance, $\Tref$ is the reference triangle.
Concatenating this homeomorphism with the triangulation map~$\mathcal G$ we obtain maps $\tau_T : \Tref \to T \subset \omega$
for each triangle $T$ in $\mathcal{T}$ (Figure~\ref{fig:triangulation}).
In the following we will only use the maps $\tau_T$, $T \in \mathcal{T}$, and not the
simplicial complex $\Sigma$.  The reason we nevertheless have to introduce $\Sigma$ for the
definition of our triangulation is to make sure that the local coordinates induced
by the maps $\tau_T$ on the
triangles match at the triangle edges.
Later, the triangles will form the finite element grid.

To compute integrals over $\omega$, we further need an area element. We use the one induced by the reference immersion which, in local coordinates $\tau: \eta \mapsto x$, reads
\begin{equation}
\label{eq:area_element}
 d\omega
 \colonequals
 \sqrt{\det\big((\nabla \bm_0)^T \nabla \bm_0 \big)}\,dx_1\,dx_2.
\end{equation}
We then split the
integral along the triangulation $\mathcal{T}$, and we
rewrite the energy as a sum over the triangles
\begin{multline}\label{eq:finite_strain_energy}
 I(\bm, \Qe)
 =
 \sum_{T \in \mathcal{T}} \int_{T} \Big[W_\text{memb}(\Ee, \Ke) + W_\text{bend}(\Ke) \Big] \sqrt{\det((\nabla \bm_0)^T \nabla \bm_0)}\,dx_1\,dx_2 \\
 \hfill - \Pi_\text{ext}(\bm,\Qe).
\end{multline}

The energy density in~\eqref{eq:finite_strain_energy} depends on the pair $(\bm, \Qe)$
through the strain measures $\Ee$ and $\Ke$. It consists of a membrane part
\begin{multline}\label{eq:Wmemb}
 W_\text{memb}(\Ee, \Ke)
 \colonequals
 \Big(h - K \frac{h^3}{12}\Big)W_\text{m}(\Ee) + \Big(\frac{h^3}{12} - K \frac{h^5}{80}\Big) W_\text{m}(\Ee \bb + \bc \Ke) \\
 + \frac{h^3}{6} W_\text{mixt}(\Ee, \bc\Ke\bb- 2H \bc\Ke) + \frac{h^5}{80}W_\text{mp}\big((\Ee \bb + \bc\Ke)\bb\big),
\end{multline}
and a bending--curvature part
\begin{equation}\label{eq:Wbend}
 W_\text{bend}(\Ke)
 \colonequals
 \Big(h - K \frac{h^3}{12}\Big)W_\text{curv}(\Ke)
 + \Big(\frac{h^3}{12} - K \frac{h^5}{80}\Big)W_\text{curv}(\Ke \bb)
 + \frac{h^5}{80}W_\text{curv}(\Ke \bb^2).
\end{equation}
The parameter $h > 0$ represents the thickness of the shell.
The values $K$ and $H$ are the Gauss and mean curvatures
of $\bm_0(\omega)$, respectively, and $\ba, \bb$, and $\bc$ are the fundamental tensors and the
alternating pseudo-tensor of the reference shell surface $\bm_0(\omega)$ from Definitions~\ref{def:ab}
and~\ref{def:alternating_symbol}, respectively.
All these quantities are defined on the abstract parameter surface $\omega$, and can be expressed in flat coordinates by means of local coordinate charts.

\begin{remark}
 Note that some of the terms in~\eqref{eq:Wmemb}~and~\eqref{eq:Wbend} involve the quantities $\bb, \bc$, and $H$,
 which depend on the orientation of the surface.
 However, these quantities always either appear in pairs such that the orientation-dependence cancels,
 or they appear as arguments of quadratic functionals (see below), and therefore the sign does not matter.
 Consequently, the integral~\eqref{eq:finite_strain_energy} is independent of the choice of orientation,
 and it is well defined even for non-orientable surfaces.
\end{remark}

Of the functionals appearing in~\eqref{eq:Wmemb}~and~\eqref{eq:Wbend}, $W_\text{mixt}$ is a bilinear form on $\R^{3 \times 3} \times \R^{3 \times 3}$ and
$W_\text{m}$, $W_\text{mp}$,  and $W_\text{curv}$
are quadratic forms on $\R^{3 \times 3}$.  For their explicit representations define
\begin{equation*}
 \sym X \colonequals \frac{1}{2}\big(X + X^T),
 \qquad
 \skew X \colonequals \frac{1}{2}\big(X - X^T),
 \qquad
 \dev_3 X \colonequals X - \frac{1}{3}(\tr X) \identity_3.
\end{equation*}
Then
\begin{align}
\nonumber
W_\text{mixt}(X, Y) & \colonequals \mu \langle \sym X, \sym Y \rangle + \mu_c \langle \skew X, \skew Y \rangle  + \frac{\lambda \mu}{\lambda + 2 \mu} (\tr X)\cdot (\tr Y)
\\
\nonumber
W_\text{m}(X)
 & \colonequals \mu \norm{\sym X}^2 + \mu_c \norm{\skew X}^2  + \frac{\lambda \mu}{\lambda + 2 \mu} \left( \tr X\right)^2
\\
\nonumber
W_\text{mp}(X)
 & \colonequals
 \mu \norm{\sym X}^2 + \mu_c \norm{\skew X}^2  + \frac{\lambda}{2} \left( \tr X\right)^2
 \\
\label{eq:Wcurv}
 W_\text{curv}(X) &\colonequals \mu L_c^2 \Big(b_1 \norm{\dev_3 \sym X}^2 +
	b_2\norm{\skew X}^2 + b_3\left( \tr X\right)^2\Big).
\end{align}
The parameters $\mu$ and $\lambda$ are the Lamé constants of classical elasticity.
The coefficient $L_c > 0$ is an internal length scale, $b_1,b_2,b_3 > 0$ are curvature
coefficients, and $\mu_c \ge 0$ is the Cosserat couple modulus.  Keeping in mind
that the bending--curvature tensor~$\Ke$ has the dimension of inverse length $\mathsf{L}^{-1}$
and the curvatures $\bb$ and $K$ have dimension of inverse length squared $\mathsf{L}^{-2}$,
the dimension of $L_c$ is indeed a length $\mathsf{L}$, and the $b_1, b_2, b_3$ are dimensionless.
Note further that $W_\text{m}(X) = W_\text{mixt}(X,X)$ and $W_\text{mp}(X) = W_\text{m}(X) + \tfrac{\lambda^2}{2(\lambda + 2\mu)} (\tr X)^2$.

\begin{lemma}
The energy functional~$I$ defined in~\eqref{eq:finite_strain_energy}
(without the external load potential $\Pi_\textup{ext}$) is frame-indifferent in the sense that
\begin{equation}\label{eq:frame_indifference}
I(\bR\,\bm, \bR \,\Qe) = I (\bm, \Qe)
\end{equation}
for all constant rotations $\bR \in \SOdrei$.
\end{lemma}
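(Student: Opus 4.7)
The plan is to show that the strain tensors $\Ee$ and $\Ke$ are themselves invariant under the transformation $(\bm,\Qe)\mapsto(\bR\bm,\bR\Qe)$ for constant $\bR\in\SOdrei$. Since the reference immersion $\bm_0$ is not modified, none of the geometric quantities $\ba$, $\bb$, $\bc$, $K$, $H$, $\ba^\alpha$, $\sqrt{\det((\nabla\bm_0)^T\nabla\bm_0)}$ are affected, so once invariance of the strain measures is established, invariance of the whole integrand in~\eqref{eq:finite_strain_energy} follows immediately.

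First I would handle $\Ee$. Since $\bR$ is constant, $\partial(\bR\bm)/\partial x_\alpha = \bR\,\partial\bm/\partial x_\alpha$, and since $\bR^T\bR=\identity_3$,
\begin{equation*}
 (\bR\Qe)^T\frac{\partial(\bR\bm)}{\partial x_\alpha}\otimes \ba^\alpha - \ba
 = \Qe^T\bR^T\bR\,\frac{\partial\bm}{\partial x_\alpha}\otimes \ba^\alpha - \ba
 = \Ee(\bm,\Qe).
\end{equation*}
The same cancellation takes care of $\Ke$: constancy of $\bR$ yields $\partial(\bR\Qe)/\partial x_\alpha = \bR\,\partial\Qe/\partial x_\alpha$, whence
\begin{equation*}
 (\bR\Qe)^T\frac{\partial(\bR\Qe)}{\partial x_\alpha}
 = \Qe^T\bR^T\bR\,\frac{\partial\Qe}{\partial x_\alpha}
 = \Qe^T\frac{\partial\Qe}{\partial x_\alpha},
\end{equation*}
so applying $\axl(\cdot)\otimes\ba^\alpha$ gives $\Ke(\bR\Qe)=\Ke(\Qe)$.

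With $\Ee$ and $\Ke$ invariant, every argument of the quadratic and bilinear forms $W_\textup{m}$, $W_\textup{mp}$, $W_\textup{mixt}$, $W_\textup{curv}$ appearing in~\eqref{eq:Wmemb} and~\eqref{eq:Wbend} is unchanged (the factors $\bb$, $\bc$, $H$, $K$ only involve $\bm_0$), so $W_\textup{memb}$ and $W_\textup{bend}$ are pointwise invariant. The area element $\sqrt{\det((\nabla\bm_0)^T\nabla\bm_0)}\,dx_1\,dx_2$ is likewise independent of $(\bm,\Qe)$, so each summand of~\eqref{eq:finite_strain_energy} equals its counterpart under the rotation, proving~\eqref{eq:frame_indifference}.

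No step is really an obstacle here; the only thing worth being careful about is that the definition of $\Ee$ contains the fixed tensor $\ba$ built from $\bm_0$, not a rotated version of it — but since $\bm_0$ is not transformed in the statement of frame indifference, this matches. The argument relies in an essential way on $\bR$ being constant, since otherwise $\partial(\bR\Qe)/\partial x_\alpha$ would pick up a $\partial\bR/\partial x_\alpha$ term that does not cancel in $\Ke$.
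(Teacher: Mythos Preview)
Your proof is correct and is exactly the natural argument: the strain measures $\Ee$ and $\Ke$ are themselves invariant under $(\bm,\Qe)\mapsto(\bR\bm,\bR\Qe)$ because the constant rotation cancels via $\bR^T\bR=\identity_3$, and everything else in the integrand depends only on the fixed reference immersion $\bm_0$. The paper states this lemma without proof, so there is nothing to compare against; your argument is the standard one and would serve perfectly well as the missing proof.
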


\begin{remark}
If the reference configuration $\bm_0(\omega)$ is a subset of $(x_1, x_2, 0) \subset \R^3$, then the shell surface curvature measures $K, H, \bb$ vanish and
the model reduces to the Cosserat flat shell model of~\cite{sander_neff_birsan:2016,neff:2004},
but with a modified curvature tensor.
See~\cite[Section~5.2]{neff_derivation:2020} for a discussion.
\end{remark}

The term $\Pi_\text{ext}$ in~\eqref{eq:finite_strain_energy} represents external loads acting on the shell.
We focus here on loads acting on the deformation, and disregard possible orientation loads.
Let $\gamma_t \subset \partial \omega$ be a subset of the shell surface boundary.
We assume that the load on $\gamma_t$ can be expressed via a density function $\bt \in L^2(\gamma_t, \R^3)$.
Analogously, we assume that the load on the body $\omega$ can be expressed
via a function $\boldf \in L^2(\omega, \R^3)$.
Then we define the potential of the two loads resulting from $\boldf$ and $\bt$ as
\begin{equation}\label{eq:Pi}
\Pi_\text{ext}(\bm, \Qe)
\colonequals
\int_{\gamma_t} \langle \bt, \bm - \bm_0\rangle\,ds
+
\int_{\omega} \langle \boldf, \bm - \bm_0\rangle \,d\omega.
\end{equation}
This corresponds to the term used in~\cite{neff_existence:2020}, without the loads for the microrotations.

\begin{remark}
 \label{rem:birsan_energy}
 Following a slightly more general ansatz, \textcite{birsan:2021} derived the following
 closely related shell energy density
 \begin{equation*}
  W_\textup{shell, alt}(\Ee, \Ke) \colonequals
  W_\textup{memb, alt}(\Ee, \Ke) + W_\textup{curv}(\Ke),
  \end{equation*}
  with the same density for the bending--curvature part~\eqref{eq:Wbend}, but with an alternative membrane density
  \begin{align}
  \nonumber
  W_\textup{memb, alt}(\Ee, \Ke) \colonequals
  &
  \left( h - K \frac{h^3}{12} \right) W_\textup{Coss}(\Ee) +
  \left( \frac{h^3}{12} - K \frac{h^5}{80}\right) W_\textup{Coss}(\Ee\bb + \bc \Ke)\\
  \label{eq:Wmembrane_alt}
  & +
  \frac{h^3}{6} W_\textup{Coss}(\Ee, \bc \Ke \bb - 2H \bc \Ke) +
  \frac{h^5}{80} W_\textup{Coss}\big( ( \Ee \bb + \bc\Ke) \bb\big).
 \end{align}
 The density $W_\textup{memb, alt}(\Ee, \Ke)$ looks formally like $W_\textup{memb}(\Ee, \Ke)$
 from Equation~\eqref{eq:Wmemb}; however, the bilinear and quadratic forms $W_\textup{m}$,
 $W_\textup{mixt}$ and $W_\textup{mp}$ are all replaced by
 \begin{align*}
	W_\textup{Coss}(X,Y) & \colonequals W_\textup{mixt}(X,Y) - \frac{(\mu - \mu_c)^2}{2(\mu + \mu_c)}(\bn_0 X)\cdot (\bn_0 Y)
	\\
	& \: = \mu \langle \sym X, \sym Y \rangle + \mu_c \langle \skew X, \skew Y \rangle
	\\
	& \quad + \frac{\lambda \mu}{\lambda + 2 \mu} (\tr X)\cdot (\tr Y) - \frac{(\mu - \mu_c)^2}{2(\mu + \mu_c)}(\bn_0 X)\cdot (\bn_0 Y)
  \shortintertext{or}
  W_\textup{Coss}(X) & \colonequals W_\textup{Coss}(X,X).
 \end{align*}
 To see that the difference to~\eqref{eq:Wmemb} is smaller than it seems, note that
 the tensors $\Ee$, $\Ke$, and $\bb$ are all of the form
 \begin{equation}
 \label{eq:normal_kernel}
  X = \sum_{\substack{i=1,2,3 \\ \alpha=1,2}} X^i{}_\alpha \ba_i \otimes \ba^\alpha
  \qquad
  \text{(with $\ba_3 \colonequals \bn_0$)},
 \end{equation}
 i.e., their kernels always contain $\bn_0$ (Note that $\alpha$ ranges form $1$ to $2$,
 but that $i$ ranges from $1$ to $3$).
 As a consequence, all
 arguments of $W_\textup{Coss}$ in~\eqref{eq:Wmembrane_alt} and of $W_\textup{memb}$ in~\eqref{eq:Wmemb}
 are also of this form.
 For such tensors $X,Y$ we get, with the split
 \begin{equation*}
   X = \identity_3 X = (\ba + \bn_0 \otimes \bn_0)X = \ba X + \bn_0 \otimes (\bn_0 X),
 \end{equation*}
 the formulas
 \begin{align*}
  \langle \sym X, \sym Y \rangle
  & =
  \langle \sym(\ba X), \sym(\ba Y) \rangle + \tfrac{1}{2} \left(\bn_0 X \right)\cdot \left(\bn_0 Y \right) \\
  \langle \skew X, \skew Y \rangle
  & =
  \langle \skew(\ba X), \skew(\ba Y) \rangle + \tfrac{1}{2} \left(\bn_0 X \right)\cdot \left(\bn_0 Y \right) \\
  \tr X
  & =
  \tr(\ba X) \qquad \text{and} \qquad \tr Y= \tr(\ba Y).
 \end{align*}
 Using these, the densities $W_\textup{mixt}(X,Y)$
 and $W_\textup{Coss}(X,Y)$ can be rewritten as \cite[Eq.\,(111)]{birsan:2020}
 \begin{align*}
  W_\textup{mixt}(X,Y) & = W_\textup{mixt}(\ba X, \ba Y) + \frac{\mu + \mu_c}{2}(\bn_0X)\cdot (\bn_0Y),
 \shortintertext{and}
  W_\textup{Coss}(X,Y) & = W_\textup{mixt}(\ba X, \ba Y) + \frac{2\mu \mu_c}{\mu + \mu_c}\left(\bn_0X \right)\cdot (\bn_0Y),
 \end{align*}
 respectively, if $X$ and $Y$ are of the form~\eqref{eq:normal_kernel}.

 Hence, for all relevant arguments the density $W_\textup{Coss}$ differs from
 $W_\textup{mixt}$ only in the transverse shear coefficient:
 The geometric mean $\frac{\mu + \mu_c}{2}$ is replaced by the harmonic mean $\frac{2\mu \mu_c}{\mu + \mu_c}$.
 The same modified membrane energy has been justified by $\Gamma$-convergence arguments
 in~\cite{saem_ghiba_neff:2022}, and the harmonic mean has already appeared in~\cite{neff_chelminski:2007}.
 We will compare this alternative energy functional numerically to the functional from equation~\eqref{eq:finite_strain_energy}
 in Chapter~\ref{sec:comparison_with_3d_model}.
\end{remark}

\subsection{Existence of minimizers}
\label{sec:existence_minimizers}
In \cite{neff_existence:2020}, \citeauthor*{neff_existence:2020} showed existence
of minimizers~$(\bm,\Qe)$ for the functional~\eqref{eq:finite_strain_energy}
in the space $H^1(\omega,\R^3) \times H^1(\omega, \SOdrei)$ for the case that
$\omega$ can be parametrized with a single coordinate chart. However, the
result can be easily generalized to shells with a general two-dimensional parameter manifold~$\omega$.
We briefly state the result here as a preparation for the existence result for geometric finite element solutions in Chapter~\ref{sec:existence}.
We do not give a detailed proof, because that would largely be a copy of the proof in \cite{neff_existence:2020},
but Chapter~\ref{sec:existence} on the existence proof for finite element solutions has more details.
Note that the result here only covers the case that the Cosserat couple modulus $\mu_c$ is strictly positive.
Showing existence of solutions for the important case $\mu_c =0$ requires a different proof,
and more regularity of the microrotation field $\Qe$.
Such a proof has appeared in the literature only for the case of a reference surface
without curvature~\cite{neff:2007}, but it is clear that that proof could be
easily extended to the more general case with curvature as well.

For stating the result in Theorem~\ref{thm:neff_existence} below we had to change
various details, beyond the modifications needed to adapt the statement to our notation.
The original result for surfaces with a single coordinate system (Theorem~3.3 in~\cite{neff_existence:2020})
asked for smoothness of quantities like $\bQ_0$ and $\bn_0$, which are not
coordinate-independent.
As it turns out, though, these smoothness assumptions are not actually needed. We comment
on our modified assumptions at the end of this section.

For a rigorous existence result, we need a formal definition of the space of $\SOdrei$-valued Sobolev functions on $\omega$.
From the different, non-equivalent definitions in the literature we select the one
based on the canonical injection of $\SOdrei$ into $\R^{3\times 3}$ used also, e.g.,
in \cite{gawlik_leok:2017,neff_existence:2020} or \cite{sander:2013}.
With $H^1(\omega, \R^{3\times 3})$ the space of $\R^{3\times 3}$-valued first-order Sobolev functions
as defined in~\cite{wloka:1987}, the corresponding $\SOdrei$-valued Sobolev space is
\begin{equation}\label{eq:sobolev_space_manifold}
 H^1(\omega, \SOdrei)
 \colonequals
 \Big\{R \in H^1(\omega, \R^{3 \times 3})\; : \;
        \text{$R^T R = \identity_3$ and $\det R = 1$ a.e.}\Big\}.
\end{equation}

We then need appropriate Dirichlet conditions
for the shell surface deformation.  For this, let $\gamma_d$ be a subset of $\omega$
such that the restriction of $H^1$-functions to $\gamma_d$ is well-defined.
This can be a part of the boundary of $\omega$ if $\omega$ has a boundary,
but $\gamma_d$ can also be an open subset of $\omega$.
We prescribe Dirichlet conditions on $\gamma_d$ by means of a given deformation $\bm^* \in H^1(\omega, \R^3)$, and require
\begin{equation} \label{eq:dirichlet}
\bm = \bm^*\quad \text{on $\gamma_d$} \quad \text{a.e.}
\end{equation}

There are several reasonable corresponding conditions for the microrotation field $\Qe$.
The proof in \cite{neff_existence:2020} assumes that $\Qe$ is clamped in $\gamma_d$, i.e.,
\begin{equation}\label{eq:dirichlet_rotation}
\Qe =  \Qe^* \quad\text{on } \gamma_d \quad \text{a.e.},
\end{equation}
for a given microrotation field $\Qe^* \in H^1(\omega, \SOdrei).$
However, the proof can be easily adapted to the case of Dirichlet conditions for $\Qe$
on a different set, and it even works with no Dirichlet conditions for $\Qe$
at all \cite[Corollary~3.4]{neff_existence:2020}.

In the following statement of the existence result, $\kappa_1, \kappa_2 : \omega \to \R$ are the principal
curvatures of the reference immersion $\bm_0(\omega)$.
\begin{theorem}[Existence of minimizers]\label{thm:neff_existence}
 Assume that the external loads defined in~\eqref{eq:Pi} satisfy
 \begin{equation*}
  \boldf \in L^2 (\omega,\R^3),
  \qquad
  \bt \in L^2 (\gamma_t, \R^3),
 \end{equation*}
 and that the Dirichlet data satisfies
 \begin{equation*}
  \bm^* \in H^1 (\omega,\R^3),
  \qquad
  \Qe^* \in H^1(\omega, \SOdrei).
 \end{equation*}
 Suppose that the reference shell surface immersion $\bm_0 \in H^1(\omega, \R^3)$ is
 $H^2$ on each triangle $T$ of $\mathcal{T}$, and such that
 \begin{equation*}
  \nabla \bm_0 \in L^\infty (\omega,\R^{3 \times 2})
  \qquad \text{and} \qquad
  \sqrt{\det\big((\nabla \bm_0)^T \nabla \bm_0\big)} \ge a_0 > 0 \; \text{a.e.}
 \end{equation*}
 in any coordinate system,
where $a_0$ is a constant. Then, for values of the thickness~$h$ such that $h\abs{\kappa_1} < \frac{1}{2}$
and $h\abs{\kappa_2} < \frac{1}{2}$,
and for constitutive coefficients $\mu > 0$, $\mu_c > 0$, $2\lambda + \mu > 0$, and $b_1, b_2,b_3 > 0$, the functional $I$ defined in \eqref{eq:finite_strain_energy} has at least one minimizer in the set $H^1(\omega, \R^3) \times H^1(\omega, \SOdrei)$ subject to the
 boundary conditions \eqref{eq:dirichlet} and (possibly)~\eqref{eq:dirichlet_rotation}.
\end{theorem}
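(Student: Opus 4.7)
The plan is to apply the direct method of the calculus of variations, adapting the proof of Theorem~3.3 in~\cite{neff_existence:2020} to the present setting with an abstract parameter manifold~$\omega$. Since the energy~\eqref{eq:finite_strain_energy} is already written as a finite sum of integrals over triangles $T \in \mathcal{T}$ read in the charts $\tau_T$ from Definition~\ref{def:triangulation}, and since the transition maps between overlapping charts are continuous, almost all estimates below are local and coincide with those of~\cite{neff_existence:2020}. The hypotheses $\nabla \bm_0 \in L^\infty$ and $\sqrt{\det((\nabla \bm_0)^T \nabla \bm_0)} \ge a_0 > 0$ imply, on each triangle, uniform two-sided bounds on $\ba_\alpha$, $\ba^\alpha$ and on the surface area element~\eqref{eq:area_element}; piecewise $H^2$-regularity of $\bm_0$ yields $\bb \in L^2$. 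These are the only geometric ingredients the proof needs.

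\textbf{Coercivity.} Under $b_1, b_2, b_3 > 0$ the form $W_\text{curv}$ is positive definite, and the smallness assumption $h|\kappa_\alpha| < 1/2$ makes the leading prefactor $h - K h^3/12$ strictly positive, so the bending--curvature part $W_\text{bend}$ in~\eqref{eq:Wbend} controls $\norm{\Ke}_{L^2(\omega)}^2$ from above. Because $\Qek$ takes values in $\SOdrei$, it is bounded pointwise by $\sqrt{3}$ in Frobenius norm, and the identity $|\partial_\alpha \Qek| = \sqrt{2}\,|\axl(\Qek^T \partial_\alpha \Qek)|$, together with the uniform bounds on $\ba_\alpha$, upgrades this to a uniform $H^1$-bound on any minimizing sequence $\Qek$. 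Similarly $\mu > 0$, $\mu_c > 0$, and $2\lambda + \mu > 0$ make $W_\text{m}$ positive definite, so that $W_\text{memb}$ controls $\norm{\Ee}_{L^2}^2$ after absorbing the mixed $\Ee$--$\Ke$ contributions. Since $\Qek$ is orthogonal, $|\Qek^T \partial_\alpha \bm_k| = |\partial_\alpha \bm_k|$, and combined with the $L^\infty$-bounds on $\ba$ and $\ba^\alpha$ this gives an $L^2$-bound on $\nabla \bm_k$. The Dirichlet condition~\eqref{eq:dirichlet} on $\gamma_d$ and Poincaré's inequality then close a uniform $H^1$-bound on $\bm_k$.

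\textbf{Passage to the limit.} By reflexivity, extract a subsequence, still labelled $(\bm_k, \Qek)$, with $\bm_k \rightharpoonup \bm$ in $H^1(\omega, \R^3)$ and $\Qek \rightharpoonup \Qe$ in $H^1(\omega, \R^{3 \times 3})$. Rellich--Kondrachov applied chart by chart gives $\Qek \to \Qe$ strongly in every $L^p$, $p < \infty$, and pointwise a.e.\ along a further subsequence; since $\SOdrei$ is closed, the limit satisfies $\Qe \in H^1(\omega, \SOdrei)$ in the sense of~\eqref{eq:sobolev_space_manifold}. Weak continuity of the trace operator preserves the Dirichlet conditions~\eqref{eq:dirichlet} and~\eqref{eq:dirichlet_rotation}. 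The strains $\Eek$ and $\Kek$ then converge weakly in $L^2$ to $\Ee$ and $\Ke$: each bilinear product such as $\Qek^T \partial_\alpha \bm_k$ or $\Qek^T \partial_\alpha \Qek$ pairs a weakly $L^2$-convergent factor with a strongly $L^2$-convergent, $L^\infty$-bounded factor. The integrands $W_\text{memb}$ and $W_\text{bend}$ are finite sums of positive definite quadratic forms with bounded coefficients depending only on $\bm_0$, so by standard lower semicontinuity of convex functionals the full energy $I$ is weakly lower semicontinuous; the linear external load~\eqref{eq:Pi} passes to the limit via the compact embedding $H^1 \hookrightarrow L^2$. This yields $I(\bm, \Qe) \le \liminf_k I(\bm_k, \Qek)$, so $(\bm,\Qe)$ minimizes $I$ over the admissible set.

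\textbf{Main obstacle.} The delicate step is the coercivity in $\Ee$, because the membrane density~\eqref{eq:Wmemb} is \emph{not} manifestly positive definite in $\Ee$ alone: several of its summands mix $\Ee$ with $\Ke$ through the tensors $\bb$ and $\bc$. One therefore has to absorb these mixed contributions into the already-controlled $\Ke$-bound using Young's inequality, exploiting the smallness $h|\kappa_\alpha| < 1/2$ to keep the residual coefficient of $\norm{\Ee}_{L^2}^2$ strictly positive. This book-keeping is carried out on a single chart in~\cite{neff_existence:2020} and, being local in nature, transfers to the present setting simply by summing the estimates over the finite triangulation~$\mathcal{T}$.
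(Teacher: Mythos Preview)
Your proposal is correct and follows essentially the same direct-method route that the paper adopts (the paper itself does not give a detailed proof of this theorem, but refers to~\cite{neff_existence:2020} and sketches the same steps---coercivity, extraction of weak/strong limits, weak $L^2$-convergence of the strains, convexity of the density in $(\Ee,\Ke)$ for lower semicontinuity---when proving the discrete analogue, Theorem~\ref{thm:neff_existence_discrete}). Your identification of the chart-local nature of the estimates and of the delicate mixed $\Ee$--$\Ke$ terms in the coercivity as the main technical point matches the paper's viewpoint.

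One small imprecision worth tightening: in your lower-semicontinuity step you write that $W_\text{memb}$ and $W_\text{bend}$ are ``finite sums of positive definite quadratic forms''. This is not literally true term by term, since the $\tfrac{h^3}{6}W_\text{mixt}(\Ee,\,\bc\Ke\bb-2H\bc\Ke)$ summand is genuinely bilinear and hence not convex on its own. What is true---and what you actually need---is that the \emph{total} density is a positive semidefinite quadratic form in the pair $(\Ee,\Ke)$ under the thickness constraint $h|\kappa_\alpha|<\tfrac12$; this is precisely the same book-keeping you already invoke for the coercivity, so the fix is simply to cite that analysis once more rather than argue term by term.
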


In \cite{birsan_neff:2023}, \citeauthor{birsan_neff:2023} showed existence of solutions
under the weaker condition $3\lambda + 2\mu > 0$.

As mentioned above, several technical changes have been made to the statement
of Theorem~\ref{thm:neff_existence} in comparison to Theorem~3.3 of~\cite{neff_existence:2020}.
When reading the following list keep in mind that the map $y_0$
that is used in~\cite{neff_existence:2020} corresponds to a representation of $\bm_0$
in local coordinates $\bm_0 \circ \tau^{-1} : \R^2 \to \R^3$ in our notation
(see also Section~\ref{sec:continuous_model}).

\begin{enumerate}[1),leftmargin=*]
 \item The original result required
 \begin{equation*}
  \det(\nabla y_0 \:\vert \:\bn_0) \ge a_0 > 0,
 \end{equation*}
 instead of
 \begin{equation*}
  \sqrt{\det\big((\nabla \bm_0)^T \nabla \bm_0 \big)} \ge a_0 > 0 ,
 \end{equation*}
 but direct calculations show that the two conditions are the same.
 The latter form makes it clear that the normal vector $\bn_0$ (and hence
 the local surface orientation) is not involved.

 \item For the same reason we write
  \begin{equation*}
   \nabla \bm_0 \in L^\infty (\omega, \R^{3 \times 2})
  \end{equation*}
  instead of
  \begin{equation*}
   (\nabla y_0 \: \vert \: \bn_0) \in L^\infty (\omega, \R^{3 \times 3}).
  \end{equation*}
  These two lines are equivalent, as $\bn_0$ is a unit vector field on $\omega$ by construction,
  and therefore in $L^\infty$ automatically.

 \item Theorem~3.3 of~\cite{neff_existence:2020} also demanded that
   $\bQ_0 \in H^1 (\omega,\SOdrei)$, but $\bQ_0$ is not independent of the choice
   of local coordinates. A thorough inspection of the proof, though, reveals that
   this condition is not actually needed.

 \item Finally, the original proof assumed that the reference shell surface deformation
   $y_0:\R^2 \to \R^3$ is continuous and injective.  However, this is not actually used in the proof,
   and we have therefore omitted in from Theorem~\ref{thm:neff_existence}.
\end{enumerate}

\section{Discretization by geometric finite elements}
\label{sec:discretization}

{The general geometrically nonlinear Cosserat shell model of the previous section
is independent of any particular finite element discretization.  However,}
the discretization of \replaced{such a}{the general geometrically nonlinear Cosserat shell} model is special for two reasons.
First, the model is based on an abstract parameter surface~$\omega$, which cannot
be represented directly by an algorithm or data structure. The second, more severe problem
is the field of microrotations $\Qe$.
Such a field lives in a non-Euclidean space, and hence it cannot be discretized by
standard finite element methods. We instead use geometric finite elements (GFE),
which generalize standard finite elements to non-Euclidean spaces~\cite{hardering_sander:2020}.
Sections~\ref{sec:geometric_interpolation_rules}~and~\ref{sec:geometric_fe_functions}
provide a brief introduction,
focusing on the relevant case of the target space $\text{SO(3)}$.

\subsection{Discretizing the shell surface}
\label{sec:discretizing_shell_surface}

In the shell model of Chapter~\ref{sec:continuous_model}, the shell surface is represented
as an immersion~$\bm$ of an abstract two-dimensional parameter manifold $\omega$ into $\R^3$.
We have already equipped $\omega$ with a conforming triangulation~$\mathcal{T}$ (Definition~\ref{def:triangulation}),
which induces a finite collection of homeomorphisms $\{ \tau_T \}_{T \in \mathcal{T}}$
from a particular triangle $\Tref \subset \R^2$ (called the reference triangle) to the
triangles $T \subset \omega$ of the triangulation.  To construct a Lagrange finite element space
in this abstract situation let $p \ge 1$ be an approximation order, and choose a global set
of corresponding Lagrange points.

\begin{definition}[Lagrange points]
\label{def:lagrange_points}
 Let $a_1, \dots, a_N$ be a finite set of points on $\omega$ such that:
 \begin{enumerate}
  \item For each triangle $T \in \mathcal{T}$, the pulled-back points
   \begin{equation*}
    A_{\textup{ref},T}
    \colonequals
    \Big\{ x \in \Tref \; : \; \tau_T(x) \in \{a_1,\dots,a_N\} \Big\}
   \end{equation*}
   form a well-posed $p$-th-order polynomial interpolation problem on $\Tref$,

  \item restricting $A_{\textup{ref},T}$ to any edge of $\Tref$ results in a
   well-posed $p$-th-order polynomial interpolation problem on that edge.
 \end{enumerate}
\end{definition}
Such a set of Lagrange points allows to define Lagrange finite elements on $\omega$ mapping
into $\R^3$. Such finite elements shall be globally continuous on~$\omega$, and polynomial
on each triangle when expressed in terms of coordinates on $\Tref$.
We call the space of such finite elements $S_h(\omega,\R^3)$,
and we use it to discretize the shell surface configuration $\bm$,
and frequently also $\bm_0$.

Conceptually, the two configurations $\bm_0$ and $\bm$ are from the same set of objects: they are both immersions from $\omega$ into $\R^3$.
However, while $\bm$ is an unknown of the problem, the reference configuration $\bm_0$ forms part of the problem specification.
It is used to define the metric and curvature terms $\ba, \bb, \bc, H$ and $K$,
and the area element~\eqref{eq:area_element} for the integration of the hyperelastic shell energy density.
Therefore, in the actual implementation it is convenient to treat $\bm_0$ as the geometric realization of the finite element grid (cf. the separation of a finite element grid into its topological and geometric aspects in Chapter~5.3 of~\cite{sander_dune:2020}).  That is, from a more algorithmic
viewpoint the elements of the finite element grid will be the sets $\bm_0(\omega|_T) \subset \R^3$,
parametrized via the maps $\bm_0 \circ \tau_T : \Tref \to \bm_0(\omega|_T)$, $T \in \mathcal{T}$.

To reap the benefits of such an approach, one needs a grid implementation that is able
to represent non-affine element geometries, because otherwise all curvature terms
appearing in the energy~\eqref{eq:finite_strain_energy} will evaluate to zero.
While vanishing curvatures are not a problem per se (the true reference surface to be simulated
may be piecewise affine after all), the true power of the shell model comes from the
proper handling of the curvature terms.

As both $\bm_0$ and $\bm$ are objects of the same type, it is natural to discretize them
in the same way, i.e., to approximate them using finite elements of the same type and order.
It is unclear whether this is really necessary, but the experiments in Section~\ref{sec:locking_test}
show a clear influence of the approximation order of $\bm_0$ on the simulation result.
With a powerful grid data structure such as the one described in~\cite{praetorius_stenger:2022}, it is also possible to approximate $\bm_0$ by functions that are more general than piecewise polynomials.

\subsection{Geometric interpolation rules}
\label{sec:geometric_interpolation_rules}

The second major challenge for finite element methods for the Cosserat shell model
is how to discretize the field of microrotations $\bQ_e$.
Such fields cannot be represented by piecewise polynomials, because nontrivial matrix-valued
polynomials cannot map into $\SOdrei$ everywhere. As a remedy, the Geometric Finite Element
method introduces generalized polynomials that do map into $\SOdrei$ everywhere, and
discretizes manifold-valued functions by piecewise such generalized polynomials.
Just as for the regular finite element method, the geometric finite element functions are first defined
on the individual grid
elements, and are then pieced together via global continuity requirements.

As the function space is nonlinear, the space of finite element functions
is not described as the span of a set of basis functions, but rather as the range of
an interpolation rule that maps a finite set of values to a function.
Several such interpolation rules have been proposed in the literature~\cite{hardering_sander:2020}.
We review the two most prominent ones.
For what follows let $T \subset \omega$ be a triangle from the triangulation $\mathcal{T}$
of the shell parameter surface~$\omega$.
This triangle is given a local coordinate system by the map $\tau_T : T_\text{ref} \to T$.
The interpolation rule will be defined with respect to these coordinates.
In a slight abuse of notation, let $a_1, \dots, a_m$ be the subset of those Lagrange points
of Definition~\ref{def:lagrange_points} contained in $T$.  By construction, there is then
a set of scalar $p$-th-order Lagrangian interpolation functions
$\lambda_i : T \to \R$, i.e., polynomials of order $p$ in the local coordinates such that
\begin{equation*}
 \lambda_i(a_j) = \delta_{ij}\quad \text{for $i,j=1,\dots,m$},
 \qquad \text{and} \qquad
 \sum_{i=1}^m \lambda_i(\eta) = 1 \quad \forall \eta \in T.
\end{equation*}
The following constructions now both generalize Lagrangian interpolation
of values in $\SOdrei$ given at the Lagrange points.

\subsubsection{Projection-based interpolation}
\label{sec:projection_bases_interpolation}

The first approach uses an embedding space of $\SOdrei$.  It was first used
for flat, one-dimensional domains in \cite{gawlik_leok:2017,romero:2004},
and later generalized to general Riemannian manifolds
$\mathcal{M}$ and higher domain dimensions in~\cite{grohs_hardering_sander_sprecher:2019}.
\begin{figure}
\begin{tikzpicture}
 \node at (0.0,2.8) {\includegraphics[width=2.8cm]{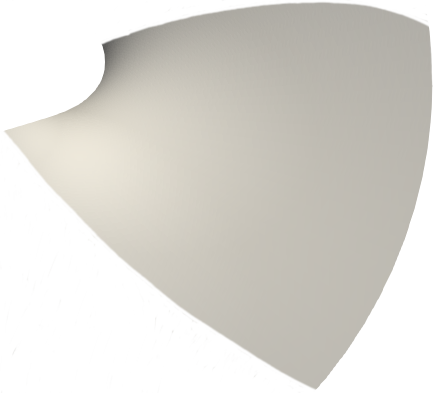}};
 \node at (-0.5,2.1) {$\omega$};

 \coordinate (a1) at (-0.5, 3.4);
 \coordinate (a2) at (1.1, 3.6);
 \coordinate (a3) at (0.6, 2.1);

 \draw[thick,fill=gray!45,fill opacity=0.3] (a1)
              .. controls ++(0.8,0.0) and (0.5,3.3) .. (a2)
              .. controls ++(-0.1,-0.8) and (1.3,2.7) .. (a3)
              .. controls ++(-1.0,0.5) and (0.0,2.7) .. (a1);

 \draw[fill=white, thick] (a1) circle (0.05cm);
 \draw[fill=white, thick] (a2) circle (0.05cm);
 \draw[fill=white, thick] (a3) circle (0.05cm);

 \node[label={[label distance=-0.1cm]left:$a_1$}] at (a1) {};
 \node[label={[label distance=-0.14cm]above:$a_2$}] at (a2) {};
 \node[label={[label distance=-0.1cm]below:$a_3$}] at (a3) {};

 \node at (0.3,2.95) {$T$};

 \draw[-stealth, very thick] (0.8,3.0) .. controls ++(1.2,0.4) and (3.1,3.3) .. (4.2,3.1);
 \node at (3.0,3.6) {$\eta \mapsto I^\text{proj}(R, \eta)$};

 \node at (4.25,2.55) {$R_1$};
 \node at (6.85,3.05) {$R_2$};
 \node at (6.05,3.8) {$R_3$};

 \coordinate (v1) at (4.513,2.7);
 \coordinate (v2) at (6.8,3.3);
 \coordinate (v3) at (5.8,3.7);
 \coordinate (p1) at (5.5,3.15);
 \coordinate (p2) at (5.25,3.475);
 \coordinate (center) at (6,2.5);
 \draw[dashed] (center) -- (p1);
 \filldraw[draw=black, fill=gray!45] (v2) -- (v1) -- (v3) -- (v2);
 \draw[fill=black] (p1) circle (0.04cm);
 \draw[fill=black] (p2) circle (0.04cm);
 \draw[fill=black] (center) circle (0.04cm);
 \draw[dashed] (p1) -- (p2);
 \draw[thick] (center) circle (1.5cm);
 \draw[fill=white, thick] (v1) circle (0.05cm);
 \draw[fill=white, thick] (v2) circle (0.05cm);
 \draw[fill=white, thick] (v3) circle (0.05cm);
 \coordinate (left) at (4.5,2.5);
 \draw[dashed, thick] (left) arc(180:0:1.5cm and 0.3cm);
 \draw[thick] (left) arc(-180:0:1.5cm and 0.3cm);
 \node at (7.5,1) {$\SOdrei$};

 \filldraw[draw=black, fill=gray!45] (2.1,0.4) -- ++(1.25,0.0) -- ++(-1.25,1.25) -- cycle;
 \draw[-stealth] (2.1,0.4) -- ++(1.5,0);
 \draw[-stealth] (2.1,0.4) -- ++(0,1.5);
 \node at (2.5,0.8) {$\Tref$};

 \draw[-stealth, very thick] (2.0,1.5) .. controls (1.8,1.8) and (1.4,2.2) .. (0.7,2.6);
 \node at (2.0,2.3) {$\mathcal G \circ \tau_T$};

\end{tikzpicture}
\caption{First-order projection-based interpolation}
\label{fig:projection_based_interpolation}
\end{figure}
Let $\iota : \SOdrei \to \R^{3 \times 3}$ be the canonical injection, and let
$R \colonequals (R_1, \dots, R_m) \in \SOdrei^m$ be a set of values associated to the Lagrange points on $T$.
First we consider the canonical Lagrange interpolation operator $I_{\R^{3 \times 3}}$
of values embedded into $\R^{3 \times 3}$
\begin{align*}
I_{\R^{3 \times 3}}  & \; : \SOdrei^m \times \,T \to \R^{3 \times 3}, \\
I_{\R^{3 \times 3}}(R,\eta) & \coloneqq \sum_{i = 1}^m \iota(R_i)\, \lambda_i(\eta).
\end{align*}
Even though the $R_i$ are elements of $\SOdrei$, the values of $I_{\R^{3 \times 3}}(R,\cdot)$ will
in general not be in $\SOdrei$ away from the Lagrange points $a_1,\dots,a_m$.
To get $\SOdrei$-valued functions we compose $I_{\R^{3 \times 3}}$ pointwise
with the closest-point projection
\begin{equation*}
\mathcal{P} : \R^{3 \times 3} \to \SOdrei,
\qquad
\mathcal{P}(Q)\coloneqq \argmin_{P\in \SOdrei} \norm{\iota(P)-Q},
\end{equation*}
where $\norm{\cdot}$ is any unitarily invariant matrix norm.
As is well-known, this projection is simply $\operatorname{polar}(Q)$, the map onto the orthogonal factor
of the polar decomposition \cite{neff_lankeit_madeo:2014,fan_hoffman:1955}.
We therefore define $\SOdrei$-valued projection-based interpolation by composition of $I_{\R^{3 \times 3}}$
and the $\operatorname{polar}(\cdot)$ map.
\begin{definition}[Projection-based interpolation]
\label{def:projection_based_interpolation_HS}
 Let $T \subset \omega$ be a triangle.
 Let $\lambda_1,\dots,\lambda_m$ be a set of  $p$-th-order
 scalar Lagrangian shape functions on $T$, and let $R_1,\dots,R_m \in \SOdrei$ be values at the
 corresponding Lagrange points.  We call
\begin{align*}
 I^\textup{proj} & \; : \; \SOdrei^m \times \: T \to \SOdrei, \\
 I^\textup{proj}(R_1,\dots,R_m ;\eta) & \coloneqq \operatorname{polar}\Big( \sum_{i = 1}^m \iota(R_i) \:\lambda_i(\eta) \Big)
\end{align*}
$p$-th-order projection-based interpolation on $\SOdrei$.
\end{definition}
It is important to realize that there are values $R_1, \dots, R_m$ for which this construction fails.
The polar decomposition is defined for all matrices $X \in \R^{3 \times 3}$,
but it is unique only if $X$ is invertible. Furthermore, if $X$ is invertible then
$\operatorname{polar}(X)$ is in $\SOdrei$ if and only if $\det X > 0$.
However, $\det X > 0$ may not hold for all $\eta \in T$ if $X$ is constructed by Lagrange interpolation
$X = I_{\R^{3\times 3}}(R,\eta)$ in $\R^{3 \times 3}$.
It is argued in~\cite{hardering_sander:2020,grohs_hardering_sander_sprecher:2019} that
$\det I_{\R^{3\times 3}}(R,\eta) > 0$ holds for all $\eta \in T$
if the $R_1,\dots,R_m \in \SOdrei$ are close enough to each other.
Such a requirement of locality is common to all geometric finite element constructions
in spaces of positive curvature, and hardly ever poses a problem in practical computations.
Several efficient algorithms for computing the polar factor of $3 \times 3$ matrices are available in the literature \cite{higham_noferini:2016,gawlik_leok:2017b}.

The shell bending--curvature tensor $\Ke$ of Definition~\ref{def:strain_tensors}
requires first derivatives of the microrotation field $\Qe$.
Existence of the derivative for projection-based interpolation functions is a consequence of the smoothness
of Lagrange interpolation, together with the fact that the closest-point
projection onto $\SOdrei$ is infinitely differentiable for all invertible
matrices~\cite{gawlik_leok:2017b,kenney_laub:1991}.

\begin{theorem}[\textcite{grohs_hardering_sander_sprecher:2019}]
\label{thm:projection_based_interpolation_is_differentiable}
Let $R_1,\dots,R_m$ be coefficients on $\SOdrei$ with respect to
a $p$-th-order Lagrange basis $\lambda_1, \dots, \lambda_m$ on a triangle $T$.
If the function
\begin{equation*}
I^\textup{proj} \; : \; \SOdrei^m \times \: T \to \SOdrei
\end{equation*}
is defined for particular $R_1,\dots,R_m$ and $\eta$, then it is infinitely differentiable
with respect to the $R_1,\dots,R_m$ and $\eta$ there.
\end{theorem}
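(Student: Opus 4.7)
The plan is to exhibit $I^\textup{proj}$ as the composition of two maps whose smoothness is either trivial or classical, and to pass from pointwise well-definedness to smoothness on a whole neighborhood via continuity.

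First I would introduce the Euclidean-valued auxiliary map
\begin{equation*}
 \Phi : \SOdrei^m \times T \to \R^{3 \times 3},
 \qquad
 \Phi(R_1, \dots, R_m; \eta) \colonequals \sum_{i=1}^m \iota(R_i)\, \lambda_i(\eta),
\end{equation*}
so that $I^\textup{proj} = \operatorname{polar} \circ \Phi$ wherever the right-hand side makes sense. The map $\Phi$ is linear in the variables $\iota(R_i)$ and polynomial of degree $p$ in the local coordinates $\tau_T(\eta)$ on $T$. Since $\SOdrei$ is a smooth embedded submanifold of $\R^{3\times 3}$ and the canonical injection $\iota$ is smooth, the restriction of $\Phi$ to $\SOdrei^m \times T$ is $C^\infty$.

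Next I would invoke the classical result of \textcite{kenney_laub:1991} (also used in~\cite{gawlik_leok:2017b}) that the map $\operatorname{polar}(\cdot)$ assigning the orthogonal factor of the polar decomposition is infinitely differentiable on the open set $\mathrm{GL}(3,\R) \subset \R^{3 \times 3}$ of invertible matrices, and that on the further open subset $\{X \in \R^{3\times 3} : \det X > 0\}$ its image lies in $\SOdrei$. This is the only nontrivial ingredient; were it not available, this would be the main obstacle, but in our situation it may be cited as a known fact.

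Finally I would use the hypothesis that $I^\textup{proj}$ is defined at $(R_1, \dots, R_m; \eta)$. By construction of the projection-based interpolant in Definition~\ref{def:projection_based_interpolation_HS}, well-definedness at this point forces $\Phi(R_1, \dots, R_m; \eta)$ to have positive determinant (otherwise the $\operatorname{polar}$-factor is either non-unique or fails to lie in $\SOdrei$). By continuity of $\Phi$ and openness of $\{X : \det X > 0\}$, there is an open neighborhood $\mathcal{U}$ of $(R_1, \dots, R_m; \eta)$ in $\SOdrei^m \times T$ on which $\Phi$ maps into this open set of matrices. On $\mathcal{U}$ we have $I^\textup{proj} = \operatorname{polar} \circ \Phi$ as a composition of two $C^\infty$ maps, so the chain rule on manifolds yields that $I^\textup{proj}$ is infinitely differentiable at $(R_1, \dots, R_m; \eta)$, as claimed.
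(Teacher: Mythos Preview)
Your proof is correct and follows essentially the same approach as the paper. The paper does not give a detailed proof of this theorem but justifies it in one sentence immediately before the statement: smoothness of Lagrange interpolation together with the fact that the closest-point projection onto $\SOdrei$ is infinitely differentiable for all invertible matrices (citing \cite{gawlik_leok:2017b,kenney_laub:1991}). Your write-up simply makes this sketch precise, including the continuity argument needed to pass from well-definedness at a point to smoothness on a neighborhood.
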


Computing the derivative of the projection onto $\SOdrei$ is discussed,
e.g., in~\cite{gawlik_leok:2017b}.

\subsubsection{Geodesic interpolation}
\label{sec:geodesic_interpolation}

\begin{figure}
\begin{center}
 \begin{tikzpicture}
 \node at (0.0,2.8) {\includegraphics[width=2.8cm]{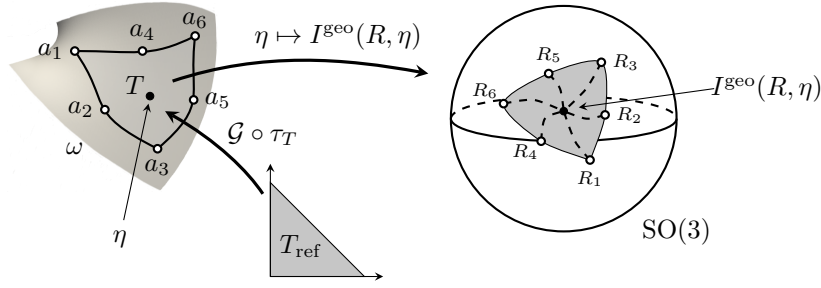}};
 \node at (-0.5,2.1) {$\omega$};

 \coordinate (a1) at (-0.5, 3.4);
 \coordinate (a2) at ( -0.1, 2.62);
 \coordinate (a3) at (0.6, 2.1);
 \coordinate (a4) at (0.4, 3.4);
 \coordinate (a5) at (1.08, 2.75);
 \coordinate (a6) at (1.1, 3.6);

 \draw[thick,fill=gray!45,fill opacity=0.3] (a1)
              .. controls ++(0.8,0.0) and (0.5,3.3) .. (a6)
              .. controls ++(-0.1,-0.8) and (1.3,2.7) .. (a3)
              .. controls ++(-1.0,0.5) and (0.0,2.7) .. (a1);

 \draw[fill=white, thick] (a1) circle (0.05cm);
 \draw[fill=white, thick] (a2) circle (0.05cm);
 \draw[fill=white, thick] (a3) circle (0.05cm);
 \draw[fill=white, thick] (a4) circle (0.05cm);
 \draw[fill=white, thick] (a5) circle (0.05cm);
 \draw[fill=white, thick] (a6) circle (0.05cm);

 \node[label={[label distance=-0.1cm]left:$a_1$}] at (a1) {};
 \node[label={[label distance=-0.1cm]left:$a_2$}] at (a2) {};
 \node[label={[label distance=-0.1cm]below:$a_3$}] at (a3) {};
 \node[label={[label distance=-0.1cm]above:$a_4$}] at (a4) {};
 \node[label={[label distance=-0.1cm]right:$a_5$}] at (a5) {};
 \node[label={[label distance=-0.14cm]above:$a_6$}] at (a6) {};

 \node at (0.3,2.95) {$T$};

 \draw[-stealth, very thick] (0.8,3.0) .. controls ++(1.2,0.4) and (3.1,3.3) .. (4.2,3.1);
 \node at (3.0,3.6) {$\eta \mapsto I^\text{geo}(R, \eta)$};

 \filldraw[draw=black, fill=gray!45] (2.1,0.4) -- ++(1.25,0.0) -- ++(-1.25,1.25) -- cycle;
 \node at (2.5,0.8) {$\Tref$};
 \draw[-stealth] (2.1,0.4) -- ++(1.5,0);
 \draw[-stealth] (2.1,0.4) -- ++(0,1.5);

 \draw[-stealth, very thick] (2.0,1.5) .. controls (1.8,1.8) and (1.4,2.2) .. (0.7,2.6);
 \node at (2.0,2.3) {$\mathcal G \circ \tau_T$};

 \draw[-stealth]  (0.15,1.15) -- (0.48,2.7);
 \draw[fill=black] (0.5,2.8) circle (0.05cm);
 \node at (0.1,0.9) {$\eta$};

 \coordinate (center) at (6,2.5);
 \draw[thick] (center) circle (1.5cm);
 \coordinate (left) at (4.5,2.5);
 \draw[dashed, thick] (left) arc(180:0:1.5cm and 0.3cm);
 \draw[thick] (left) arc(-180:0:1.5cm and 0.3cm);
 \node at (7.5,1) {$\SOdrei$};

 \coordinate (R1) at (6.35,1.95);
 \node[label={[label distance=-0.1cm]below:{\scriptsize $R_1$}}] at (R1) {};
 \coordinate (R1c) at (6.15,2.2);

 \coordinate (R2) at (6.55,2.55);
 \node[label={[label distance=-0.1cm]right:{\scriptsize $R_2$}}] at (R2) {};
 \coordinate (R2c) at (6.3,2.5);

 \coordinate (R3) at (6.5,3.25);
 \node[label={[label distance=-0.1cm]right:{\scriptsize $R_3$}}] at (R3) {};
 \coordinate (R3c) at (6.3,2.9);

 \coordinate (R4) at (5.7,2.2);
 \node[label={[label distance=-0.3cm]below left:{\scriptsize $R_4$}}] at (R4) {};
 \coordinate (R4c) at (5.75,2.5);

 \coordinate (R5) at (5.8,3.1);
 \node[label={[label distance=-0.1cm]above:{\scriptsize $R_5$}}] at (R5) {};
 \coordinate (R5c) at (5.95,2.9);

 \coordinate (R6) at (5.2,2.7);
 \node[label={[label distance=-0.25cm]above left:{\scriptsize $R_6$}}] at (R6) {};
 \coordinate (R6c) at (5.6,2.75);

 \coordinate (center) at (6,2.6);
 \coordinate (centerSO3) at (6,2.5);
 \draw[fill=black] (centerSO3) circle (0.03cm);
 \draw[fill=gray!45] plot [smooth] coordinates {(R1)(R2)(R3)(R5)(R6)(R4)(R1)};
 \draw [dashed, thick] plot [smooth] coordinates {(R1)(R1c)(center)};
 \draw [dashed, thick] plot [smooth] coordinates {(R2)(R2c)(center)};
 \draw [dashed, thick] plot [smooth] coordinates {(R3)(R3c)(center)};
 \draw [dashed, thick] plot [smooth] coordinates {(R4)(R4c)(center)};
 \draw [dashed, thick] plot [smooth] coordinates {(R5)(R5c)(center)};
 \draw [dashed, thick] plot [smooth] coordinates {(R6)(R6c)(center)};
 \draw[fill=white, thick] (R1) circle (0.05cm);
 \draw[fill=white, thick] (R2) circle (0.05cm);
 \draw[fill=white, thick] (R3) circle (0.05cm);
 \draw[fill=white, thick] (R4) circle (0.05cm);
 \draw[fill=white, thick] (R5) circle (0.05cm);
 \draw[fill=white, thick] (R6) circle (0.05cm);
 \draw[fill=black] (center) circle (0.05cm);
 \draw[-stealth] (8,2.9) -- (6.2,2.65);
 \node at (8.7,2.9) {$I^\text{geo}(R, \eta)$};
 \end{tikzpicture}
\end{center}
\caption{Second-order geodesic interpolation}
 \label{fig:geodesic_interpolation}
\end{figure}

The second construction works without an embedding space.
Recall that the usual Lagrange interpolation of values $v_1,\dots,v_m$ in $\R$ can be written as a minimization
problem
\begin{equation*}
 \eta \mapsto \argmin_{w \in \R} \sum_{i=1}^m \lambda_i(\eta) \abs{v_i -w}^2
\end{equation*}
for each $\eta \in T$.
This formulation can be generalized to values in $\SOdrei$
by replacing the absolute value by the canonical geodesic distance on $\SOdrei$~\cite{huynh:2009}
\begin{equation*}
 \dist(Q_1,Q_2)
 \coloneqq
 \norm{\log (Q_1 Q_2^T)}.
\end{equation*}

\begin{definition}[Geodesic interpolation \cite{sander:2013}]
\label{def:geodesic_interpolation}
 Let $\lambda_1, \dots \lambda_m$ be a set of  $p$-th-order
 scalar Lagrange functions on $T$, and let $R_i \in \SOdrei$,
 $i=1,\dots,m$, be values at the corresponding Lagrange points.  We call
\begin{align}
 \nonumber
 I^\textup{geo} & \; : \; \SOdrei^m \times \: T \to \SOdrei \\
 \label{eq:geodesic_interpolation} I^\textup{geo}(R_1,\dots,R_m;\eta) & = \argmin_{R \in \SOdrei}  \sum_{i=1}^m \lambda_i(\eta) \dist(R_i,R)^2
\end{align}
$p$-th-order geodesic interpolation on $\SOdrei$.
\end{definition}
Again, this definition is only well-posed if the coefficients $R_1, \dots, R_m$  are close enough to each
other on $\SOdrei$.  Different proofs for this can be
found in~\cite{karcher:1977,kendall:1990,groisser:2004,sander:2013}.
A simple example for first-order geodesic finite elements on triangles is given in~\cite{sander_neff_birsan:2016}.
\bigskip

As in the case of projection-based interpolation one can show that the functions created by geodesic interpolation are smooth.
This fact follows directly from the implicit function theorem.
\begin{theorem}[\textcite{sander:2012,sander:2013}]
\label{thm:geodesic_interpolation_is_differentiable}
Let $R_1,\dots,R_m$ be coefficients on $\SOdrei$ with respect to
a $p$-th-order Lagrange basis $\lambda_1,\dots, \lambda_m$ on a triangle $T \subset \omega$.
If the function
\begin{equation*}
I^\textup{geo} \; : \; \SOdrei^m \times \:T \to \SOdrei
\end{equation*}
is well-posed in the sense that the minimization Problem \eqref{eq:geodesic_interpolation} defining $I^\textup{geo}$ has a unique solution at $R_1, \dots, R_m$ and $\eta$, then it is infinitely differentiable with respect to the $R_i$ and $\eta$.
\end{theorem}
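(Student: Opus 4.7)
The plan is to apply the implicit function theorem on manifolds to the first-order optimality condition for the minimization problem~\eqref{eq:geodesic_interpolation}. Define
\begin{equation*}
 F(R_1,\dots,R_m,\eta,R) \colonequals \sum_{i=1}^m \lambda_i(\eta)\, \dist(R_i,R)^2
\end{equation*}
and let $\Phi(R_1,\dots,R_m,\eta,R) \colonequals \operatorname{grad}_R F$ be the Riemannian gradient of $F$ with respect to its last argument. Write $R^* \colonequals I^\textup{geo}(R_1,\dots,R_m;\eta)$. By assumption $R^*$ is the unique minimizer, so $\Phi(R_1,\dots,R_m,\eta,R^*) = 0$, and the implicit function theorem on the manifold $\SOdrei^m \times T \times \SOdrei$ will yield the desired smoothness provided (i) $\Phi$ is smooth near $(R_1,\dots,R_m,\eta,R^*)$ and (ii) the partial derivative $D_R\Phi$ at this point is an isomorphism of $T_{R^*}\SOdrei$.

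For step (i), I would use that $\dist(\cdot,\cdot)^2$ on $\SOdrei$ is smooth off the cut locus, and that the Lagrange functions $\lambda_i$ are polynomial in the local coordinates on $T$ induced by $\tau_T$; hence both $F$ and $\Phi$ are smooth on the open set where no $R_i$ is in the cut locus of $R$. Uniqueness of the minimizer forces $R^*$ to sit in a geodesic ball around each $R_i$ of radius strictly less than $\pi/2$ (the injectivity radius of $\SOdrei$ with its bi-invariant metric), so a full neighborhood of $(R_1,\dots,R_m,\eta,R^*)$ lies inside this smooth regime.

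For step (ii), the derivative $D_R\Phi$ at $R^*$ is precisely the Riemannian Hessian $\operatorname{Hess}_R F(R^*)$. I would invoke the standard Karcher-mean argument \cite{karcher:1977,kendall:1990,groisser:2004}: on a geodesic ball of radius less than half the injectivity radius of $\SOdrei$, each map $R\mapsto \dist(R_i,R)^2$ is strictly geodesically convex with uniformly positive-definite Hessian, and a positive convex combination inherits strict convexity. The assumed uniqueness of the minimizer places $R^*$ inside such a ball, making $\operatorname{Hess}_R F(R^*)$ a positive-definite, hence invertible, self-adjoint operator on $T_{R^*}\SOdrei$. The implicit function theorem, applied either intrinsically on $\SOdrei$ or after passing to an exponential chart around $R^*$, then produces a smooth local map $(R_1,\dots,R_m,\eta)\mapsto R^*$, and by uniqueness this map coincides with $I^\textup{geo}$. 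Smoothness propagates to all higher derivatives by iterating the argument.

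The main obstacle is step (ii): verifying that $\operatorname{Hess}_R F(R^*)$ is positive definite requires the quantitative geodesic-convexity theory of the squared distance on $\SOdrei$ (Rauch comparison and curvature bounds), rather than merely the qualitative uniqueness of the minimizer. Once this classical fact is in hand, steps (i) and the final application of the implicit function theorem are routine.
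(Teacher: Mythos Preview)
Your overall strategy---apply the implicit function theorem to the first-order optimality condition---is exactly what the paper invokes (it gives no further detail beyond citing~\cite{sander:2012,sander:2013} and stating that the result ``follows directly from the implicit function theorem''), so the approach matches.

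There are, however, two genuine gaps in your justification of step~(ii). First, for approximation order $p\ge 2$ the Lagrange functions $\lambda_i(\eta)$ are \emph{not} all nonnegative (cf.\ the handling of the case $\lambda_i<0$ in the proof of Lemma~\ref{lem:pointwise_limits}), so $F$ is not a ``positive convex combination'' of squared-distance functions and the Karcher--Kendall convexity argument does not apply as you state it. Second, mere uniqueness of the minimizer does not force $R^*$ to lie in any prescribed geodesic ball around the $R_i$ (your claim in step~(i)), and it does not by itself give a nondegenerate Hessian---think of $x\mapsto x^4$ on~$\R$. In the cited references both well-posedness and the positive-definite Hessian are obtained from the \emph{same} quantitative hypothesis, namely that the coefficients $R_1,\dots,R_m$ are sufficiently clustered (this is also how the paper frames well-posedness just before Definition~\ref{def:geodesic_interpolation}); the theorem here should be read in that regime. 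If you want to argue from uniqueness alone, you would need a separate result that on $\SOdrei$ a unique minimizer of a signed combination of squared distances necessarily has invertible Hessian, and no such fact is standard.

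A minor point: the injectivity radius of $\SOdrei$ with its bi-invariant metric is $\pi$ (in the normalization where $\dist$ equals the rotation angle), not $\pi/2$.
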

The papers \cite{sander:2012,sander:2013} explain how the derivatives can be computed in practice.

\subsubsection{Relationship between geodesic and projection-based interpolation}
There is a surprising connection between the geodesic and the projection-based interpolation rule.
As observed independently by \cite{gawlik_leok:2017} and \cite{sprecher:2016}, we recover
the projection-based interpolation if we replace the geodesic distance in \eqref{eq:geodesic_interpolation} by the Euclidean distance of $\R^{3 \times 3}$ defined by the Frobenius norm $\norm{\cdot}_F$:
\begin{align}\label{eq:connection_projection_based_geodesic_interpolation}
 \argmin_{R \in \SOdrei} \sum_{i=1}^m \lambda_i(\eta) \norm{R_i - R}^2_{F}
 & =
 \argmin_{R \in \SOdrei} \bigg(\norm{R}^2_{F}-2\bigg\langle R,\sum_{i=1}^m \lambda_i(\eta)R_i\bigg\rangle\bigg) \\\nonumber
 & =
 \argmin_{R \in \SOdrei} \bigg\lVert R-\sum_{i=1}^m \lambda_i(\eta)R_i\bigg\rVert^2_F\\
 & =
 \operatorname{polar}\bigg(\sum_{i=1}^m \lambda_i(\eta)R_i\bigg)\nonumber \\
 & =
 I^\textup{proj} (R_1,\dots, R_m;\eta).\nonumber
\end{align}
This does not mean, however, that projection-based finite elements are the same as geodesic finite elements.
We will use their relationship, however, to prove completeness of projection-based geodesic finite element spaces
in Section~\ref{sec:existence}.

\subsection{Geometric finite element functions}
\label{sec:geometric_fe_functions}

The interpolation functions of the previous section are used to construct
generalizations of Lagrange finite element functions with values in $\SOdrei$,
defined on the triangulated $2$-manifold $\omega$.

Just like standard Lagrange finite element spaces, global geometric finite element spaces
are constructed by connecting the functions on the individual elements by
continuity conditions.  By the definition of the global
Lagrange point set (Definition~\ref{def:lagrange_points}), such continuity conditions can be fulfilled by the
generalized polynomials of the previous section.  Let $S_h^\textup{proj}(T)$ and
$S_h^\textup{geo}(T)$ be the sets of functions $T \to \SOdrei$ created by projection-based
and geodesic interpolation of a particular polynomial order $p$, respectively.
Then, we define the global spaces
\begin{equation}\label{eq:S_geo}
S_h^{\textrm{proj}}(\omega,\SOdrei)
\coloneqq
\Big\{R_h \in C(\omega,\SOdrei) \; : \; R_h|_{T} \in S_h^{\textrm{proj}}(T,\SOdrei) \quad \forall T \in \mathcal{T} \Big\},
\end{equation}
and likewise for $S_h^{\textrm{geo}}(\omega,\SOdrei)$.
Note that the spaces $S_h^\textup{proj}$ are nested in the sense that low-order spaces
are contained in higher-order ones, but the spaces $S_h^\textup{geo}$ are not.
Neither of the two families of spaces is nested under uniform grid refinement.

A central feature of the construction of geometric finite elements is their first-order Sobolev conformity.
Indeed, recall the definition of the nonlinear function space $H^1(\omega, \SOdrei)$ from Section~\ref{sec:existence_minimizers}.
Then we have the following inclusions:
\begin{lemma}\label{lem:gfe_smoothness}
 Let $S_h^{\textup{proj}}(\omega,\SOdrei)$ and $S_h^\textup{geo} (\omega, \SOdrei)$ be
 geometric finite element spaces of order $p \ge 1$.
 Let $P : A \mapsto \operatorname{polar}(A)$, i.e., the projection from $\R^{3 \times 3}$ onto~$\text{O}(3)$.
 \begin{enumerate}[i)]
  \item \label{item:conformity_projection_based_fe}
    If the pointwise operator norm of the differential $dP(A):\R^{3 \times 3}\to T_{P(A)}\SOdrei$ is globally bounded, then
		\begin{equation*}
		S_h^{\textup{proj}}(\omega,\SOdrei) \subset H^1(\omega,\SOdrei).
		\end{equation*}
		\item \label{item:conformity_geodesic_fe}
		 $S_h^\textup{geo} (\omega, \SOdrei)\subset H^1(\omega,\SOdrei)$.
	\end{enumerate}
\end{lemma}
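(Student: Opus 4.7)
The plan is to work one triangle at a time and then glue using the global continuity condition built into the definition~\eqref{eq:S_geo}. More precisely, to show $R_h \in H^1(\omega,\SOdrei)$ in the sense of~\eqref{eq:sobolev_space_manifold}, it suffices to show $R_h \in H^1(\omega,\R^{3\times 3})$, since the pointwise constraints $R_h^T R_h = \identity_3$ and $\det R_h = 1$ are automatic: both constructions produce values in $\SOdrei$ by design. For the $H^1(\omega,\R^{3\times 3})$ claim I would verify (a) that $R_h|_T$ lies in $H^1(T,\R^{3\times 3})$ for every $T \in \mathcal{T}$, and (b) that the pieces join continuously across inter-element edges, so that the triangle-wise weak derivatives assemble into a global weak derivative. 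The non-degeneracy assumption on $\bm_0$ from Theorem~\ref{thm:neff_existence} together with the local parametrizations $\tau_T : \Tref \to T$ lets me pass freely between $H^1(T)$ and $H^1(\Tref)$.

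For part (i), I would write the projection-based interpolant on $T$ explicitly as
\begin{equation*}
 R_h|_T(\eta)
 =
 P\Big( \sum_{i=1}^m \iota(R_i)\,\lambda_i(\eta) \Big).
\end{equation*}
The inner Lagrange combination is polynomial in local coordinates on $\Tref$, hence bounded along with its gradient, and the chain rule gives
\begin{equation*}
 \nabla R_h|_T(\eta)
 =
 dP\Big(\sum_i \iota(R_i)\lambda_i(\eta)\Big)\cdot \sum_i \iota(R_i)\otimes \nabla \lambda_i(\eta).
\end{equation*}
The second factor is uniformly bounded on $\Tref$ because the $R_i \in \SOdrei$ are bounded and $\nabla\lambda_i$ is polynomial, and the stated hypothesis bounds the operator norm of $dP$. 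Hence $R_h|_T \in W^{1,\infty}(T,\R^{3\times 3}) \subset H^1(T,\R^{3\times 3})$, and since $\SOdrei$ is compact $R_h|_T$ is itself $L^\infty$.

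For part (ii), I would invoke Theorem~\ref{thm:geodesic_interpolation_is_differentiable}: on each $T$ the geodesic interpolant is $C^\infty$ wherever the minimization problem in~\eqref{eq:geodesic_interpolation} is well-posed. Since $\Tref$ is compact and the map is smooth there, both the function and its derivative, pulled back to $\Tref$, are bounded, so again $R_h|_T \in W^{1,\infty}(T,\R^{3\times 3}) \subset H^1(T,\R^{3\times 3})$. No hypothesis analogous to the $\|dP\|$-bound in part~(i) is needed because the closest-point projection does not enter the construction.

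The gluing step (b) is the standard piecewise-$H^1$-plus-continuity argument: for any $\varphi \in C^\infty_c(\omega,\R^{3\times 3})$, summing $\int_T \partial_\alpha R_h \cdot \varphi$ over $T\in\mathcal{T}$ and integrating by parts produces boundary contributions on each interior edge that cancel because $R_h$ has the same trace from both sides, by the continuity requirement in~\eqref{eq:S_geo}. The main subtlety worth flagging is that both interpolation rules are defined only in a locality regime: the polar decomposition $P$ is smooth only away from non-invertible matrices, and the operator norm of $dP(A)$ blows up as $A$ becomes singular, so the hypothesis in (i) is really a locality constraint on the nodal values $R_1,\dots,R_m$ (they must be close enough on $\SOdrei$); similarly, well-posedness of the geodesic minimization problem in (ii) requires the nodal values to lie within a sufficiently small ball. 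These locality conditions are routinely satisfied in practice, but the statement should be understood modulo this implicit assumption.
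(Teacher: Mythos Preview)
Your argument is correct and follows the standard route. The paper itself does not give a proof of this lemma but simply cites external references: part~\ref{item:conformity_projection_based_fe} is attributed to \cite[Sec.\,1.1]{grohs_hardering_sander_sprecher:2019} and part~\ref{item:conformity_geodesic_fe} to \cite[Theorem~3.1]{sander:2012}. Your sketch---element-wise $W^{1,\infty}$-regularity via the chain rule (respectively via Theorem~\ref{thm:geodesic_interpolation_is_differentiable}), followed by the standard piecewise-$H^1$-plus-continuity gluing---is precisely the argument those references carry out, so you have reconstructed the proof rather than found an alternative.

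Two minor remarks. First, $\Tref$ is defined in the paper as an \emph{open} triangle, so your compactness step should be phrased on the closure $\overline{\Tref}$; this is harmless because both interpolation constructions extend smoothly to the element boundary (which is in fact how inter-element continuity is obtained). Second, your invocation of the non-degeneracy assumption on $\bm_0$ from Theorem~\ref{thm:neff_existence} is unnecessary here: the Sobolev space $H^1(\omega,\R^{3\times 3})$ is defined via local charts on the abstract manifold $\omega$ (as in \cite{wloka:1987}), and the maps $\tau_T$ already furnish those charts; the immersion $\bm_0$ does not enter the lemma.
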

Lemma~\ref{lem:gfe_smoothness}\,\ref{item:conformity_projection_based_fe} is proved in \cite[Sec.\,1.1]{grohs_hardering_sander_sprecher:2019}. Part\,\ref{item:conformity_geodesic_fe} corresponds to Theorem~3.1 in~\cite{sander:2012}.
The operator norm of the differential $dP(A):\R^{3 \times 3}\to T_{P(A)}\SOdrei$ is bounded in each compact subset
of the set of invertible real matrices, because, as \textcite{kenney_laub:1991} show,
we have $\norm{dP(A)} = \frac{2}{\sigma_{\text{min}} + \sigma_{\text{min}-1}}$,
where $\sigma_\text{min}$ and $\sigma_{\text{min}-1}$ are the two smallest singular values of $A$.

As a consequence of Lemma~\ref{lem:gfe_smoothness}, discrete approximations
$(\bm_h, \Qeh) \in S_h(\omega,\R^3) \times S_h(\omega,\SOdrei)$ are elements
of the space $H^1(\omega,\R^3) \times H^1(\omega,\SOdrei)$,
in which the Cosserat shell problem is well-posed (Theorem~\ref{thm:neff_existence}).
(Here we have used $S_h(\omega,\SOdrei)$ to denote either $S_h^\text{proj}(\omega,\SOdrei)$ or $S_h^\text{geo}(\omega,\SOdrei)$.)
This means that the hyperelastic shell energy~\eqref{eq:finite_strain_energy}
can be directly evaluated for geometric finite element functions,
which facilitates the mathematical understanding of the discrete shell model considerably.

\begin{remark}
\label{rem:global_space_definitions}
 For later reference in Lemma~\ref{lem:pointwise_limits} we note that there is also
 a direct definition of the global geodesic finite element spaces,
 which incorporates continuity on the level of the scalar Lagrange basis
 used in Definition~\ref{def:geodesic_interpolation}.
 Indeed, let $\lambda_1, \dots, \lambda_N$ now be a global scalar Lagrange finite element basis
 defined with respect to  the global set of Lagrange points $a_1,\dots, a_N$
 of Definition~\ref{def:lagrange_points}.
 Then it is easily seen that
 \begin{multline*}
  S_h^\textup{geo}(\omega,\SOdrei)
  =
  \Big\{ \bR_h \in L^2(\omega, \R^{3 \times 3}) \; : \; \exists R_1,\dots, R_N \in \SOdrei \\
  \text{s.t.} \quad \bR_h(\eta) = \argmin \sum_{i=1}^N \lambda_i(\eta) \dist(R_i,R)^2
  \quad \forall \eta \in \omega \Big\}.
 \end{multline*}
 The minimization problem in
 this definition looks formally like the one from the original definition~\eqref{eq:geodesic_interpolation},
 but the coefficients and Lagrange basis are now global.  Equivalence follows
 from the fact that for each point $\eta$ in a triangle $T \subset \omega$ of $\mathcal{T}$,
 the set of nonzero global Lagrange functions at~$\eta$ is exactly the set of local
 Lagrange functions used in~\eqref{eq:geodesic_interpolation}.
\end{remark}

Finally, we point out the following equivariance result,
which is of central importance for applications in mechanics.
It implies that discretizations of frame-indifferent shell models are frame-indifferent as well.
The result for $S_h^\textup{geo}$ follows directly from the fact that geodesic interpolation
is defined using metric quantities alone, and is hence invariant under isometries~\cite{sander:2012}.
The proof for projection-based finite elements is given in~\cite{grohs_hardering_sander_sprecher:2019}.
\begin{lemma}[\textcite{grohs_hardering_sander_sprecher:2019}]
\label{lem:equivariance_of_gfe}
 Let $\textup{O}(3)$ be the orthogonal group on $\R^3$, which acts isometrically on $\SOdrei$ by left multiplication.
 Pick any element $Q \in O(3)$.  For any geodesic or projection-based finite element
 function $\bR_h \in S_h(\omega,\SOdrei)$ we define
 $Q\bR_h : \omega \to \SOdrei$ by $(Q\bR_h)(\eta) = Q(\bR_h(\eta))$ for all $\eta \in \omega$.
 Then $Q\bR_h \in S_h(\omega,\SOdrei)$.
\end{lemma}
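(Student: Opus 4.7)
The plan is to verify the equivariance triangle by triangle, leveraging the fact that each of the two interpolation constructions from Section~\ref{sec:geometric_interpolation_rules} is built entirely from objects on which left multiplication by an orthogonal matrix acts compatibly. Once equivariance holds on each $T \in \mathcal{T}$, the global statement follows because $\bR_h \mapsto Q\bR_h$ is the pointwise action of a fixed continuous (in fact, smooth) map, so global continuity on $\omega$ and the Lagrange-point matching condition from Definition~\ref{def:lagrange_points} are preserved automatically.

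For the geodesic case, fix a triangle $T$ with coefficients $R_1,\dots,R_m \in \SOdrei$ and note that the canonical geodesic distance satisfies $\dist(R_i, Q^T R') = \lVert \log(R_i (Q^T R')^T)\rVert = \lVert \log((QR_i)(R')^T)\rVert = \dist(QR_i, R')$ for all $R' \in \SOdrei$, i.e., $\dist$ is left-invariant under the $O(3)$-action. Substituting $R = Q^T R'$ in the minimization~\eqref{eq:geodesic_interpolation} therefore turns the problem into
\begin{equation*}
 Q \bR_h(\eta) \;=\; \argmin_{R' \in Q\SOdrei} \sum_{i=1}^m \lambda_i(\eta) \dist(QR_i, R')^2,
\end{equation*}
which, for $Q \in \SOdrei$, is exactly the geodesic interpolant of the transported coefficients $QR_1,\dots,QR_m \in \SOdrei$, so $Q\bR_h|_T \in S_h^\textup{geo}(T,\SOdrei)$.

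For the projection-based case, the essential ingredient is the left-equivariance of the polar map: if $A \in \R^{3\times 3}$ is invertible with polar decomposition $A = \operatorname{polar}(A)\,S$ where $S = \sqrt{A^T A}$ is symmetric positive definite, then $QA = (Q\operatorname{polar}(A))\,S$ is again a polar decomposition, so uniqueness gives $\operatorname{polar}(QA) = Q\operatorname{polar}(A)$. Applying this with $A = \sum_i \iota(R_i)\lambda_i(\eta)$ and using linearity of the embedding $\iota$ yields
\begin{equation*}
 Q \bR_h(\eta) \;=\; Q\operatorname{polar}\Big(\sum_{i=1}^m \iota(R_i)\lambda_i(\eta)\Big)
 \;=\; \operatorname{polar}\Big(\sum_{i=1}^m \iota(QR_i)\lambda_i(\eta)\Big)
 \;=\; I^\textup{proj}(QR_1,\dots,QR_m;\eta),
\end{equation*}
showing $Q\bR_h|_T \in S_h^\textup{proj}(T,\SOdrei)$. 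The only subtle point is the orientation issue when $\det Q = -1$: then $Q R_i$ lies in $O(3)\setminus\SOdrei$, so strictly speaking one must interpret the conclusion inside an enlarged $O(3)$-valued finite element space, or restrict the statement to $Q \in \SOdrei$ (which is the case of interest for frame indifference). Either choice is handled by the same two identities above, so no new argument is needed; this minor bookkeeping is the only obstacle, and the computational core is immediate from the left-invariance of $\dist$ and the equivariance of $\operatorname{polar}$.
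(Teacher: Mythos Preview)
Your argument is correct and matches the paper's own reasoning, which simply notes that geodesic interpolation is built from metric quantities alone (hence invariant under isometries of $\SOdrei$) and defers the projection-based case to \cite{grohs_hardering_sander_sprecher:2019}; your explicit use of the left-equivariance of $\operatorname{polar}$ is precisely what that reference provides. You also rightly flag the $\det Q = -1$ caveat, which the paper glosses over since only $Q \in \SOdrei$ is ever needed for frame indifference.
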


Optimal approximation error bounds for geometric finite element function spaces
with values in general Riemannian manifolds~$\mathcal{M}$ have been proven
in~\cite{grohs_hardering_sander:2015,grohs_hardering_sander_sprecher:2019,hardering:2018,hardering:2018Arxiv}.
The same works show optimal discretization error bounds for certain elliptic problems.
The application of those abstract results to the energy functionals considered in this paper
will be left for future work.

\section{Discrete and algebraic Cosserat shell problems}
\label{sec:discete_and_algebraic_problems}

We now apply the geometric finite element method to the Cosserat shell model of Section~\ref{sec:continuous_model}.
As the finite element grid we use the triangulation $\mathcal{T}$ of $\omega$ (which describes the
grid topology) together with the geometry given by the reference immersion~$\bm_0$.
For each triangle $T$ of $\mathcal{T}$ we then have a mapping from $\Tref$ to $\bm_0(\omega|_T)$
given by $\bm_0 \circ \tau_T^{-1}$. This is the standard mapping from the reference element
to the grid elements used in the finite element method.

Reaping the full power of the shell model requires a finite element grid implementation that supports
nonplanar grid elements because otherwise all terms in \eqref{eq:finite_strain_energy} involving the curvatures $K$, $H$, and $\bb$
of $\bm_0(\omega)$ would vanish.

\subsection{The discrete problem}
\label{sec:discrete_problem}

The hyperelastic shell energy functional $I$ given in~\eqref{eq:finite_strain_energy} is
defined on the product of the spaces $H^1(\omega,\R^3)$ and $ H^1(\omega,\SOdrei)$.
The first factor is a standard Sobolev space of vector-valued functions on a manifold~\cite{wloka:1987}.
For its discretization we use
the space $S_h(\omega,\R^3)$ of conforming Lagrange finite elements with values in $\R^3$
introduced in Section~\ref{sec:discretizing_shell_surface}.
We consider Lagrange spaces of any order $p_1 \ge 1$, but omit the order from the notation
for simplicity.
For the microrotation field $\Qe : \omega \to \text{SO(3)}$ we use either type of
geometric finite elements described in the previous chapter.
Denote by $S_h(\omega,\SOdrei)$
the $p_2$-th-order geometric finite element space of functions on $\omega$ with respect to the grid $\mathcal{T}$ and with values in $\SOdrei$.
We omit both the order $p_2 \ge 1$ and whether it is a projection-based or geodesic
finite element space from the notation.

In the following we write $\bm_h$ for discrete displacement functions from $S_h(\omega,\R^3)$
and $\Qeh$ for discrete microrotations from $S_h(\omega,\SOdrei)$.
As is widespread in the finite element literature we use a subscript~$h$ to denote
finite element quantities. This~$h$ should not be confused with the thickness parameter
that appears in the shell model.

The function spaces may be restricted by Dirichlet conditions.
For simplicity we assume that the grid resolves the Dirichlet set~$\gamma_d$ of $\omega$.
Let $\bm^*_h \in S_h(\omega,\R^3)$ be a finite element approximation
of the Dirichlet value function $\bm^* : \omega \to \R^3$ of \eqref{eq:dirichlet}.
Then we demand that the discrete displacement $\bm_h$ fulfills the condition
\begin{equation}
\label{eq:discrete_dirichlet_condition}
 \bm_h = \bm^*_h
 \qquad
 \text{on $ \gamma_d$}.
\end{equation}
For the microrotations $\Qe$ we can define discrete approximations of the Dirichlet conditions from Section~\ref{sec:existence_minimizers}.
If the continuous model requires to match a Dirichlet value function $\Qe^* : \omega \to \SOdrei$
on the set $\gamma_d$, then one can select a geometric finite element
approximation $\Qeh^* \in S_h(\omega,\SOdrei)$ of $\Qe^*$ and require
\begin{equation}
\label{eq:discrete_rotation}
\Qeh = \Qeh^*
\qquad
\text{on $\gamma_d$}.
\end{equation}
The case with Dirichlet values on a different part of $\omega$ is handled analogously.
Also, the existence proof below works if there are no prescribed microrotations at all.

For the formulation of the discrete shell problem we exploit that both projection-based
and geodesic approximation spaces $S_h(\omega, \SOdrei)$ are contained in $H^1$ (Lemma~\ref{lem:gfe_smoothness}).
Together with the corresponding well-known result for Euclidean finite elements
(see, e.g., \cite[Satz~5.2]{braess:2013}) we can conclude that
the Cosserat shell energy functional $I$ is well-defined on the product space
$S_h(\omega,\R^3) \times S_h(\omega,\SOdrei)$ for all orders $p_1,p_2 \in \mathbb{N}$.
A suitable discrete approximation of the Cosserat shell model therefore consists
of the unmodified energy functional $I$ restricted to such a space.

\begin{problem}[Discrete Cosserat shell problem]\label{prob:discrete_shell_problem}
Find a pair of functions $(\bm_h, \Qeh)$ with
$\bm_h \in S_h(\omega,\R^3)$ and $\Qeh \in S_h(\omega,\SOdrei)$ that minimizes
the shell energy functional~$I$ given in~\eqref{eq:finite_strain_energy},
subject to the constraints~\eqref{eq:discrete_dirichlet_condition} (and possibly~\eqref{eq:discrete_rotation}) on $\gamma_d$.
\end{problem}

Note that Problem~\ref{prob:discrete_shell_problem} (without the external loads)
is frame-indifferent in the sense that
\begin{equation}\label{eq:discrete_frame_indifference}
I(R\bm_h, R \Qeh) = I (\bm_h, \Qeh)
\end{equation}
for any $\bm_h \in S_h(\omega,\R^3)$, $\Qeh \in S_h(\omega,\SOdrei)$, and any orthogonal matrix $R$.
Indeed, we have $R \bm_h \in S_h(\omega,\R^3)$, and by Lemma~\ref{lem:equivariance_of_gfe}
we also have $R\Qeh \in S_h(\omega,\SOdrei)$.
With this, \eqref{eq:discrete_frame_indifference} is simply the frame indifference~\eqref{eq:frame_indifference}
of the original model.

\subsection{Existence of solutions of the discrete problem}
\label{sec:existence}
In the following, we prove the existence of solutions of the discrete Cosserat shell Problem \ref{prob:discrete_shell_problem}. For this, we show that the energy functional \eqref{eq:finite_strain_energy} has global minimizers in the discrete product space $S_{h}(\omega,\R^3) \times S_{h}(\omega,\SOdrei)$
when subject to suitable Dirichlet conditions.
Such an existence result is obvious for standard finite element methods, where the finite element space
is a finite-dimensional vector subspace of $H^1$, and hence closed.  The closedness of the nonlinear
geometric finite element space $S_{h}(\omega,\SOdrei)$ is less obvious, however, and we show it
explicitly in Lemma~\ref{lem:pointwise_limits}.
As in Section~\ref{sec:existence_minimizers} we only show existence for the
Dirichlet condition \eqref{eq:discrete_rotation}, i.e., the case of $\Qeh$ being prescribed on $\gamma_d$.
Solutions also exist without any Dirichlet conditions for the microrotations.
The necessary modifications of the proof are discussed in \cite{neff_existence:2020}.

The existence proof uses the direct method in the calculus of variations.  It is
a modification of the proof in \cite{neff_existence:2020} for existence of minimizers
of the continuous model. The crucial fact is that geometric finite element functions are elements
of $H^1(\omega, \SOdrei)$. Therefore, all properties of the energy functional~$I$ proved
in \cite{neff_existence:2020} for the application of the direct method in that space also hold
when considering geometric finite element functions. The important new step is to show that
weakly $H^1$-convergent sequences of such functions have a subsequence that converges to a
limit in the geometric finite element space $S_h(\omega,\SOdrei)$. We will do this in Lemma~\ref{lem:pointwise_limits}.
The following existence theorem is then very similar to its continuous conterpart Theorem~\ref{thm:neff_existence}.
Note that the case $\mu_c = 0$ is again not covered---it requires a different proof,
and a slight modification of the model.

\begin{theorem}\label{thm:neff_existence_discrete}
 Let the external loads satisfy
 \begin{equation}\label{eq:external-loads}
  \boldf \in L^2(\omega, \R^3)
  \qquad \text{and} \qquad
  \bt \in L^2 (\gamma_t , \R^3 ).
 \end{equation}
 Assume that the reference configuration $\bm_0 \in H^1(\omega, \R^3)$ is an immersion
 that is in~$H^2$ on each triangle, and
 \begin{equation*}
  \nabla \bm_0 \in L^\infty (\omega,\R^{3 \times 2}),
  \qquad
  \sqrt{\det\big((\nabla \bm_0)^T \nabla \bm_0\big)} \ge a_0 > 0, \; a.e.,
 \end{equation*}
 where $a_0$ is a constant. Then, for values of the thickness $h$ such that $h\abs{\kappa_1} < \frac{1}{2}$ and $h\abs{\kappa_2} < \frac{1}{2}$,
 and for constitutive coefficients such that $\mu > 0$, $\mu_c > 0$, $2\lambda + \mu > 0$,
 and $b_1, b_2,b_3 > 0$, the minimization
 problem~\ref{prob:discrete_shell_problem} admits at least one solution in the set
 $S_h(\omega,\R^3) \times S_h(\omega,\SOdrei)$, subject to
 \begin{equation}
 \label{eq:discrete_dirichlet_condition_theorem}
  \bm_h = \bm^*_h \quad \text{and} \quad \Qeh = \Qeh^* \quad \text{on $\gamma_d$}.
 \end{equation}
\end{theorem}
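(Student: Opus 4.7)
The plan is to apply the direct method in the calculus of variations, following closely the continuous existence proof for Theorem~\ref{thm:neff_existence} from~\cite{neff_existence:2020} but accounting for the nonlinear structure of $S_h(\omega,\SOdrei)$. Because geometric finite element spaces sit inside $H^1$ by Lemma~\ref{lem:gfe_smoothness}, most estimates carry over verbatim; the genuinely new step concerns the closedness of $S_h(\omega,\SOdrei)$ under weak $H^1$ convergence, which is where Lemma~\ref{lem:pointwise_limits} will enter.

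First, I would establish coercivity of $I$ on $S_h(\omega,\R^3) \times S_h(\omega,\SOdrei)$. Under the assumed positivity conditions on $\mu,\mu_c,2\lambda+\mu,b_1,b_2,b_3$ and the thickness bound $h|\kappa_\alpha|<\tfrac{1}{2}$, the coercivity of the membrane and bending--curvature terms with respect to the $H^1$ norm of $(\bm,\Qe)$ was proven in~\cite{neff_existence:2020}; because that proof only uses that $\Qe \in H^1(\omega,\SOdrei)$ and properties of $\bm_0$ that are assumed here, it applies unchanged. The external load potential $\Pi_\textup{ext}$ from~\eqref{eq:Pi} is linear and continuous on $H^1(\omega,\R^3)$ under the assumption~\eqref{eq:external-loads}, so it is absorbed by Young's inequality. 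Fixing a minimizing sequence $(\bm_h^{(k)}, \Qeh^{(k)}) \in S_h(\omega,\R^3) \times S_h(\omega,\SOdrei)$ satisfying the Dirichlet conditions~\eqref{eq:discrete_dirichlet_condition_theorem}, coercivity yields boundedness in $H^1(\omega,\R^3) \times H^1(\omega,\R^{3\times 3})$.

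Reflexivity of $H^1$ then provides a weakly convergent subsequence with limit $(\bm_h, \bR) \in H^1(\omega,\R^3) \times H^1(\omega,\R^{3\times 3})$, and the argument in~\cite{neff_existence:2020} showing that the pointwise constraint $\bR^T\bR = \identity_3$, $\det\bR = 1$ survives weak convergence (via Rellich compactness and a.e.\ pointwise convergence along a further subsequence) places $\bR \in H^1(\omega,\SOdrei)$. For the deformation, $S_h(\omega,\R^3)$ is a finite-dimensional vector subspace of $H^1(\omega,\R^3)$, hence weakly closed, so $\bm_h \in S_h(\omega,\R^3)$ automatically. The main obstacle is to show $\bR \in S_h(\omega,\SOdrei)$: this is the purpose of Lemma~\ref{lem:pointwise_limits}. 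The idea I would use is that Rellich compactness gives strong $L^2$, and hence a.e.\ pointwise convergence of a subsequence; evaluating at the finitely many Lagrange points $a_1,\dots,a_N$ and using that the nodal values $\Qeh^{(k)}(a_i)$ lie in the compact set $\SOdrei$ yields a further subsequence whose coefficients converge in $\SOdrei$ to some $R_1,\dots,R_N$. Continuity of the global interpolation rule in its nodal arguments (for projection-based functions by continuity of $\operatorname{polar}$ on invertible matrices, for geodesic functions by the implicit-function-theorem argument underlying Theorem~\ref{thm:geodesic_interpolation_is_differentiable} together with the global formulation in Remark~\ref{rem:global_space_definitions}) then identifies the a.e.\ limit $\bR$ with the GFE function having coefficients $R_1,\dots,R_N$, placing $\bR$ in $S_h(\omega,\SOdrei)$.

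Finally, the Dirichlet conditions~\eqref{eq:discrete_dirichlet_condition_theorem} pass to the weak limit via continuity of the trace operator on $H^1(\omega,\R^3)$ together with the pointwise identification at the nodes on $\gamma_d$ just established. Weak lower semicontinuity of $I$ on $H^1(\omega,\R^3) \times H^1(\omega,\SOdrei)$, already proven in~\cite{neff_existence:2020} using convexity of the quadratic densities $W_\text{m},W_\text{mp},W_\text{curv}$ in the gradient variables combined with strong $L^2$ convergence of the lower-order factors involving $\Qe$ and $\bb,\bc$, gives
\begin{equation*}
 I(\bm_h, \bR) \le \liminf_{k \to \infty} I(\bm_h^{(k)}, \Qeh^{(k)}),
\end{equation*}
so $(\bm_h,\bR)$ is a minimizer on the admissible discrete set. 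The only substantive novelty compared to the continuous proof is the closedness of $S_h(\omega,\SOdrei)$ in the weak $H^1$ topology, which I expect to be the main technical obstacle and which is addressed by Lemma~\ref{lem:pointwise_limits}.
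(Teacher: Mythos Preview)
Your overall architecture matches the paper's proof exactly: direct method, coercivity from~\cite{neff_existence:2020}, Rellich to get strong $L^2$ and then a.e.\ pointwise convergence of a subsequence, finite-dimensionality of $S_h(\omega,\R^3)$ to place the deformation limit in the discrete space, and finally Lemma~\ref{lem:pointwise_limits} for the microrotation limit, followed by weak lower semicontinuity via convexity in the strains. You correctly single out the closedness of $S_h(\omega,\SOdrei)$ as the only genuinely new ingredient.

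Where your sketch departs from the paper is in the mechanism you propose for that closedness step. You argue via continuity of the interpolation rule in the nodal values: extract convergent coefficients $R_i^{(k)}\to R_i$ in $\SOdrei$ by compactness, then invoke continuity of $I^\textup{geo}$ (or $I^\textup{proj}$) in its nodal arguments to identify the a.e.\ limit with the GFE function on those coefficients. The paper explicitly avoids this route. The difficulty is that the interpolation rule need not be \emph{well-defined} at the limit coefficients: for geodesic interpolation the minimizer of~\eqref{eq:geodesic_interpolation} may become non-unique (the paper illustrates this with the north-pole/south-pole example in Figure~\ref{fig:limit_nonuniqueness}), and for projection-based interpolation the Euclidean interpolant $\sum_i\lambda_i(\eta)R_i$ may become singular at some $\eta$. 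In either case the implicit-function-theorem differentiability you cite breaks down precisely at the limit, so continuity in the nodal arguments is not available there. The paper's Lemma~\ref{lem:pointwise_limits} sidesteps this by working directly with the variational characterization: it shows by a contradiction argument that the pointwise limit $R(\eta)$ still satisfies the minimization inequality~\eqref{eq:limit_is_minimizer}, which only requires $R(\eta)$ to be \emph{a} minimizer, not the unique one. That is a more robust statement than continuity of the interpolation map and is what makes the lemma go through without any well-posedness assumption on the limit coefficients.
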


\begin{proof}
As the proof of Theorem~\ref{thm:neff_existence_discrete} is so close to the one of Theorem~3.3
in~\cite{neff_existence:2020}, most of the major steps will only be sketched.
\begin{enumerate}[1),leftmargin=*]
		\item
		In the first step one shows that the functional $I$ of \eqref{eq:finite_strain_energy} is bounded from below. More formally, one shows that there are constants $c_1 > 0$ and $c_2 \in \R$ such that
		\begin{align}\label{eq:bounded_below}
			I(\bm_h, \Qeh) & \ge c_1 \norm{\bm_h - \bm_h^*}^2_{H^1(\omega)} + c_2
		\end{align}
                for all $\bm_h \in S_h(\omega,\R^3)$ and $\Qeh \in S_h(\omega, \SOdrei)$.
		The trick is to see that the internal energy can compensate the unbounded external
		load functional~$\Pi_\text{ext}$ of~\eqref{eq:Pi}.%
\footnote{This part of the original proof uses the field $\bQ_0$, but can be made independent
of it by using the invariance of the matrix norm under orthogonal transformations.}

		\item It then follows that there exists an infimizing sequence $(\bm_h^k, \Qeh^k)$
		in the set $\mathcal{A}_h$ that consists of all functions in
		$S_h(\omega,\R^3) \times S_h(\omega,\SOdrei)$ that comply with the Dirichlet
		conditions~\eqref{eq:discrete_dirichlet_condition_theorem}:
		\begin{equation*}
		\lim_{k \to \infty} I(\bm_h^k, \Qeh^k) = \inf_{(\bm_h, \Qeh)  \in \mathcal{A}_h} I(\bm_h, \Qeh).
		\end{equation*}
		Since the boundary value functions $\bm_h^*$, $\Qeh^*$ are assumed to be in $\mathcal{A}_h$
		we can choose the sequence such that
		\begin{equation}\label{eq:bounded_above}
			I(\bm_h^k, \Qeh^k) \le I (\bm^*_h, \Qeh^*) < \infty, \quad \forall \, k \ge 1.
		\end{equation}
		\item From \eqref{eq:bounded_below} and \eqref{eq:bounded_above} the sequence $(\bm_h^k)$ is bounded in $H^1(\omega, \R^3)$. Therefore, one can extract a subsequence that converges weakly in $H^1(\omega, \R^3)$, and by Rellich's selection principle this subsequence also converges strongly in $L^2(\omega, \R^3)$.
		
		Let $\widehat{\bm}_h \in H^1(\omega, \R^3)$ be the limit function. As $S_h(\omega, \R^3)$ is a finite-dimensional vector subspace of $H^1(\omega, \R^3)$ we get $\widehat{\bm}_h \in S_h(\omega, \R^3)$.
		Also, $\widehat{\bm}_h$ complies with the Dirichlet conditions.

		\item Likewise, one then shows that there is a $\widehat{\bQ}_{e,h} \in H^1(\omega, \SOdrei)$
		such that for a subsequence
		$$
		\Qeh^k \rightharpoonup \widehat{\boldsymbol{Q}}_{e,h} \quad\text{in }H^1(\omega, \R^{3 \times 3})
		$$
		and
		$$
		\Qeh^k \to \widehat{\boldsymbol{Q}}_{e,h} \quad\text{in }L^2(\omega, \R^{3 \times 3}).
		$$

 \item \label{item:pointwise_convergent_subsequence}
  By~\cite[Korollar~VI.2.7]{elstrodt:2011}, the $L^2$ convergence of the subsequence implies that there is a subsubsequence
  (not relabeled) $(\Qeh^k)$ that converges pointwise a.e.\ to the same
  $\widehat{\boldsymbol{Q}}_{e,h} \in H^1(\omega, \SOdrei)$.
   Lemma~\ref{lem:pointwise_limits} below then shows that the limit is again in $S_h(\omega,\SOdrei)$,
   as required.

 \item To show lower semi-continuity of~$I$, the energy density $W \colonequals W_\text{memb} + W_\text{bend}$ of~\eqref{eq:Wmemb}
  and~\eqref{eq:Wbend}
  is considered as a function of the
  strains $\Ee$ and $\Ke$.  One shows that the sequences of strains $(\Eek)$
  and $(\Kek)$ corresponding to $(\bm_h^k, \Qeh^k)$ converge weakly
  in $L^2(\omega,\R^{3 \times 3})$ to limit strains $\hatEe$, $\hatKe$,
  and that these limits correspond to the limit $(\widehat \bm_h, \widehat{\bQ}_{e})$
  of $(\bm_h^k, \Qeh^k)$
  via the strain formulas of Definition~\ref{def:strain_tensors}.%
  \footnote{Here again, the original proof in~\cite{neff_existence:2020} uses the orientation field $\bQ_0$,
  but the argument can be reformulated without it.}

 \item The energy density is convex in the strains (albeit nonconvex in $\nabla \bm$ and $\Qe$), which implies the
  lower semi-continuity of the internal energy functional
  \begin{equation*}
   \int_\omega W(\hatEe, \hatKe)\,d\omega
   \le
   \liminf_{k \to \infty} \int_\omega W(\Eek, \Kek)\,d\omega.
  \end{equation*}

 \item With~\eqref{eq:external-loads}, the $L^2$-convergence of $\bm_h^k$ and $\Qeh^k$, and the
  continuity of the external load potential we therefore get
  \begin{equation*}
   I(\widehat{\bm}_h, \widehat{\bQ}_{e,h}) \le \liminf_{k \to \infty} I(\bm_h^k, \bQ^k_{e,h}).
  \end{equation*}
 With the standard argument of the direct method one can then conclude
 that $(\widehat{\bm}_h, \widehat{\bQ}_{e,h}) \in S_h(\omega, \R^3) \times S_h(\omega, \SOdrei)$
 is a minimizing pair. \qedhere
\end{enumerate}
\end{proof}

This existence proof requires in Step~\ref{item:pointwise_convergent_subsequence} that pointwise limits of
geometric finite element functions are again geometric finite element functions.
This is what we prove next. We prove it for geodesic finite elements only,
but by relationship~\eqref{eq:connection_projection_based_geodesic_interpolation}
we also obtain the same result for projection-based finite elements.
For future reference we temporarily drop the focus on $\SOdrei$.  Instead,
we prove the more general result that the geodesic finite element space
$S^\text{geo}_{h}(\omega, (\mathcal{M},d))$ is complete under pointwise convergence for any complete
metric space $\mathcal{M}$ with distance $d(\cdot,\cdot)$. This trivially includes the case
$\mathcal{M} = \SOdrei$.  To streamline the notation we write geodesic finite elements functions
without a subscript~$h$ in this result.

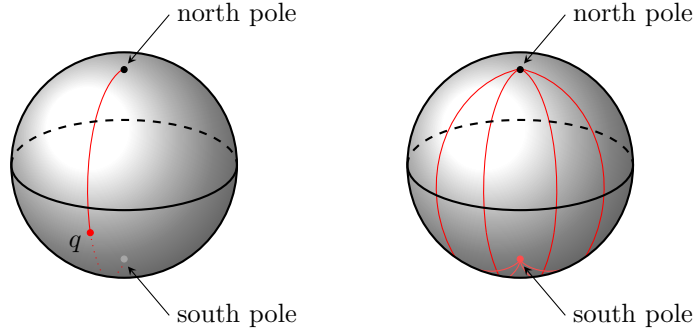
\begin{figure}
 \begin{center}
 		\begin{tikzpicture}
 		\begin{scope}
 			\coordinate (center) at (6,2.5);
 			\shade [ball color=white] (center) circle [radius=1.5cm];
 			\coordinate (nP) at (6,3.77);
 			\coordinate (redP) at (5.55,1.6);
 			\node at (5.35,1.45) {$q$};
 			\draw[draw=red, fill=red] (redP) circle (0.04cm);
 			\draw[draw=red] (redP) arc(200:100:0.6cm and 1.65cm);
 			\draw[dotted, draw=red] (redP) arc(200:230:0.6cm and 1.4cm);
 			\draw[dotted, draw=red] (6,1.23) arc(355:330:1cm and 0.5cm);
 			\draw[-stealth] (6.6,4.5) -- (6.05,3.85);
 			\node at (7.5,4.5) {north pole};
 			\draw[fill=black] (nP) circle (0.04cm);
 			\draw[-stealth] (6.6,0.5) -- (6.05,1.15);
 			\node at (7.5,0.5) {south pole};
 			\draw[draw=gray!70, fill=gray!70] (6,1.25) circle (0.04cm);
 			\draw[thick] (center) circle (1.5cm);
 			\coordinate (left) at (4.5,2.5);
 			\draw[dashed, thick] (left) arc(180:0:1.5cm and 0.6cm);
 			\draw[thick] (left) arc(-180:0:1.5cm and 0.6cm);
		\end{scope}

		\begin{scope}[xshift=0.1\textwidth]
 			\coordinate (center) at (10,2.5);
 			\shade [ball color=white] (center) circle [radius=1.5cm];
 			\coordinate (nP) at (10,3.77);
 			\coordinate (sP) at (10,1.23);
 			\coordinate (redP) at (9.55,1.6);
 			\coordinate (redP2) at (10.45,1.6);

 			\draw[draw=red] (redP) arc(200:100:0.6cm and 1.65cm); 
 			\draw[draw=red] (redP) arc(200:228:0.6cm and 1.4cm); 
 			\draw[draw=red!70] (sP) arc(355:329:1cm and 0.5cm);

 			\draw[draw=red] (redP2) arc(-20:80:0.6cm and 1.65cm); 
 			\draw[draw=red] (redP2) arc(340:312:0.6cm and 1.4cm); 
 			\draw[draw=red!70] (sP) arc(185:211:1cm and 0.5cm); 

 			\draw[draw=red] (10,3.78) arc(80:-36:1.35cm and 1.6cm); 
 			\draw[draw=red!70] (sP) arc(220:290:0.5cm and 0.4cm); 

 			\draw[draw=red] (10,3.78) arc(100:216:1.35cm and 1.6cm); 
 			\draw[draw=red!70] (sP) arc(-40:-110:0.5cm and 0.4cm); 

 			\draw[-stealth] (10.6,4.5) -- (10.05,3.85);
 			\node at (11.5,4.5) {north pole};
 			\draw[fill=black] (nP) circle (0.04cm);
 			\draw[-stealth] (10.6,0.5) -- (10.05,1.15);
 			\node at (11.5,0.5) {south pole};
 			\draw[draw=red!70, fill=red!70] (10,1.25) circle (0.04cm);
 			\draw[thick] (center) circle (1.5cm);
 			\coordinate (left) at (8.5,2.5);
 			\draw[dashed, thick] (left) arc(180:0:1.5cm and 0.6cm);
 			\draw[thick] (left) arc(-180:0:1.5cm and 0.6cm);
		\end{scope}
 		\end{tikzpicture}
 \end{center}
 \caption{The possible non-uniqueness of geodesic interpolation, shown for a first-order geodesic
   interpolation function from an interval to the sphere.
   Left: There is a unique such interpolation function
   from the north pole to any other point $q$, no matter how close to the south pole.
   Right: There are infinitely many interpolation functions from the north pole
   to the south pole.}
 \label{fig:limit_nonuniqueness}
\end{figure}

One apparent difficulty is that limits of uniquely defined geodesic interpolation functions
may not be uniquely defined anymore. As an example consider interpolation by a
shortest path on the sphere~$S^2$ from the north pole to a point $q \in S^2$.
This corresponds to geodesic interpolation on a one-dimensional simplex with two
Lagrange points~\cite[Lemma~2.2]{sander:2012}.
The interpolation from the north pole by shortest path is uniquely defined for all $q$ not equal to
the south pole, and if $(q_k)$ is a sequence of points converging to the south pole,
interpolation yields a sequence of unique shortest paths that converge pointwise
to a shortest path from the north pole to the south pole (see Figure~\ref{fig:limit_nonuniqueness}).
That limit path is not the only shortest path between the two poles.
However, it still satisfies the defining minimization
problem~\eqref{eq:geodesic_interpolation}, and is therefore a geodesic interpolation
function in the sense of the definition.

\begin{lemma}
\label{lem:pointwise_limits}
Let $\mathcal{M}$ be a complete metric space with a metric $d(\cdot,\cdot)$, and let
$ (R_k)$ be a sequence of geodesic finite element functions with $R_k : \omega \to \mathcal{M}$
for all $k \in \mathbb{N}$,
converging pointwise to some limit function $R : \omega \to \mathcal{M}$. Then
the limit function is also a geodesic finite element function.
\end{lemma}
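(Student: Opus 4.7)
My plan is to exploit the global representation of geodesic finite element functions from Remark~\ref{rem:global_space_definitions}: every $R_k \in S_h^\textup{geo}(\omega,(\mathcal{M},d))$ is determined by a tuple of nodal coefficients $(R_1^k,\dots,R_N^k) \in \mathcal{M}^N$ attached to the global Lagrange points $a_1,\dots,a_N$. These coefficients coincide with the pointwise values of $R_k$, since $\lambda_i(a_j)=\delta_{ij}$ reduces the defining minimization at $\eta=a_j$ to $\argmin_{Q\in\mathcal{M}} d(R_j^k,Q)^2$, whose unique minimizer is $R_j^k$ itself. The hypothesis of pointwise convergence therefore yields well-defined limits $R_j \colonequals R(a_j) \in \mathcal{M}$, and these are my candidate coefficients for the limit function.

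Next I would verify that $R$ is the geodesic interpolant of $(R_1,\dots,R_N)$. Fix any $\eta \in \omega$ and any test point $Q \in \mathcal{M}$. By the minimization property of $R_k(\eta)$,
\begin{equation*}
 \sum_{i=1}^N \lambda_i(\eta)\, d\bigl(R_i^k, R_k(\eta)\bigr)^2 \;\leq\; \sum_{i=1}^N \lambda_i(\eta)\, d\bigl(R_i^k, Q\bigr)^2.
\end{equation*}
The joint continuity of $d$, together with $R_i^k \to R_i$ for each~$i$ and the assumed convergence $R_k(\eta) \to R(\eta)$, allows me to pass to the limit $k \to \infty$ on both sides to obtain
\begin{equation*}
 \sum_{i=1}^N \lambda_i(\eta)\, d\bigl(R_i, R(\eta)\bigr)^2 \;\leq\; \sum_{i=1}^N \lambda_i(\eta)\, d\bigl(R_i, Q\bigr)^2 \qquad \text{for every } Q \in \mathcal{M}.
\end{equation*}
Since $Q$ was arbitrary and $\eta$ was arbitrary, $R(\eta)$ solves the minimization problem defining the geodesic interpolant with coefficients $(R_1,\dots,R_N)$ at every $\eta \in \omega$. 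This places $R$ in $S_h^\textup{geo}(\omega,(\mathcal{M},d))$.

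The main subtlety I anticipate is that the $\argmin$ in the global definition of $S_h^\textup{geo}$ must be interpreted as ``\emph{is a} minimizer'' rather than ``\emph{the} minimizer'': as illustrated by the sphere example of Figure~\ref{fig:limit_nonuniqueness}, a pointwise limit of uniquely defined geodesic interpolants may itself admit several minimizers. With this interpretation, the argument above reduces to a routine stability statement for the parametric minimization problem $Q \mapsto \sum_i \lambda_i(\eta)\, d(R_i^k, Q)^2$ under convergence of data, and needs nothing beyond the continuity of $d(\cdot,\cdot)^2$. Completeness of $\mathcal{M}$ is used only implicitly, to ensure that the pointwise limit values $R(a_j)$ are legitimate elements of~$\mathcal{M}$. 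The corresponding statement for projection-based finite elements is then automatic, since by~\eqref{eq:connection_projection_based_geodesic_interpolation} they arise as geodesic interpolation on $\SOdrei$ endowed with the ambient Frobenius metric, which is again a complete metric on $\SOdrei$.
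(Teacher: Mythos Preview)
Your proof is correct and follows the same strategy as the paper: identify the nodal coefficients as the values $R_k(a_i)$, pass these to their limits $R(a_i)$ via the assumed pointwise convergence, and verify that $R(\eta)$ minimizes the weighted sum of squared distances for every $\eta$. The paper establishes this last inequality by contradiction: it assumes a strictly better competitor $R^*$ exists, then uses explicit triangle-inequality estimates---split into the cases $\lambda_i(\eta) \ge 0$ and $\lambda_i(\eta) < 0$---to show that $R^*$ would eventually beat $R_k(\eta)$ as well, contradicting the minimality of $R_k(\eta)$. Your direct passage to the limit, relying only on the joint continuity of $d(\cdot,\cdot)^2$ in a finite sum with fixed real coefficients, is more economical and sidesteps the sign-based case distinction entirely. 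One small remark: completeness of $\mathcal{M}$ is not actually needed for either argument, since the limit function $R:\omega\to\mathcal{M}$ is assumed to exist, so $R(a_j)\in\mathcal{M}$ is automatic.
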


\begin{figure}
 \begin{center}
		\begin{tikzpicture}[scale=1.6] \small
		\coordinate (centerSO3) at (5.68,3.1);
		\shade [ball color=white] (centerSO3) circle [radius=1.5cm];
		\draw[thick] (centerSO3) circle (1.5cm);

		\coordinate (redR1) at (6.05,2.55);
		\node at (5.9,2.3) {\color{red} $R_k(a_1)$};
		\coordinate (redR12) at (6.25,3.15);
		\coordinate (redR2) at (6.2,3.85);
		\node at (6.2,4.05) {\color{red} $R_k(a_2)$};
		\coordinate (redR23) at (5.5,3.7);
		\coordinate (redR13) at (5.4,2.8);
		\coordinate (redR3) at (4.9,3.3);
		\node at (4.65,3.5) {\color{red} $R_k(a_3)$};
		\coordinate (redCenter) at (5.7,3.2);
		\draw[draw=red, fill=red, thick] (redR2) circle (0.04cm);
		\draw[red] plot [smooth] coordinates {(redR1)(redR12)(redR2)(redR23)(redR3)(redR13)(redR1)};
		\draw[fill=red, draw=red] (redR1) circle (0.04cm);
		\draw[fill=red, draw=red] (redR2) circle (0.04cm);
		\draw[fill=red, draw=red] (redR3) circle (0.04cm);
		\coordinate (redR1c) at (5.85,2.8);
		\coordinate (redR2c) at (6.0,3.5);
		\coordinate (redR3c) at (5.3,3.35);
		\draw [red, dashed] plot [smooth] coordinates {(redR1)(redR1c)(redCenter)};
		\draw [red, dashed] plot [smooth] coordinates {(redR2)(redR2c)(redCenter)};
		\draw [red, dashed] plot [smooth] coordinates {(redR3)(redR3c)(redCenter)};
		\draw[draw=red, fill=red] (redCenter) circle (0.05cm);
		
		\coordinate (R1) at (6.37,2.3);
		\node at (6.45,2.1) { $R(a_1)$};
		\coordinate (R12) at (6.55,2.95);
		\coordinate (R2) at (6.5,3.65);
		\node at (6.6,3.8) { $R(a_2)$};
		\coordinate (R23) at (5.8,3.5);
		\coordinate (R13) at (5.7,2.6);
		\coordinate (R3) at (5.2,3.1);
		\node at (4.9,2.95) { $R(a_3)$};
		\coordinate (center) at (6,3);
		\draw[fill=black, thick] (R2) circle (0.04cm);

		\draw[black] plot [smooth] coordinates {(R1)(R12)(R2)(R23)(R3)(R13)(R1)};
		\draw[fill=black] (R1) circle (0.04cm);
		\draw[fill=black] (R2) circle (0.04cm);
		\draw[fill=black] (R3) circle (0.04cm);
		\coordinate (R1c) at (6.15,2.6);
		\coordinate (R2c) at (6.3,3.3);
		\coordinate (R3c) at (5.6,3.15);
		\draw [dashed] plot [smooth] coordinates {(R1)(R1c)(center)};
		\draw [dashed] plot [smooth] coordinates {(R2)(R2c)(center)};
		\draw [dashed] plot [smooth] coordinates {(R3)(R3c)(center)};
		\draw[fill=black] (center) circle (0.05cm);
		
		\coordinate (R3b) at (5.15,3.14);
		\coordinate (redR3b) at (4.95,3.27);
		\draw[-stealth, blue] (redR3b) -- (R3b);
		
		\coordinate (R2b) at (6.45,3.7);
		\coordinate (redR2b) at (6.27,3.81);
		\draw[-stealth, blue] (redR2b) -- (R2b);
		
		\coordinate (R1b) at (6.28,2.36);
		\coordinate (redR1b) at (6.1,2.5);
		\draw[-stealth, blue] (redR1b) -- (R1b);
		
		\node at (5,2.5) {\color{blue} $k \to \infty$};
		\node at (5.2,3.9) {\color{red} $R_k(\eta)$};
		\draw[-stealth, red] (5.2,3.8) -- (5.6,3.3);
		\node at (6.9,3) {$R(\eta)$};
		\draw[-stealth, black] (6.65,3) -- (6.1,3);
	\end{tikzpicture}
 \end{center}
 \caption{The situation in the proof of Lemma~\ref{lem:pointwise_limits}
  for a single triangle, and with first-order interpolation}
\end{figure}
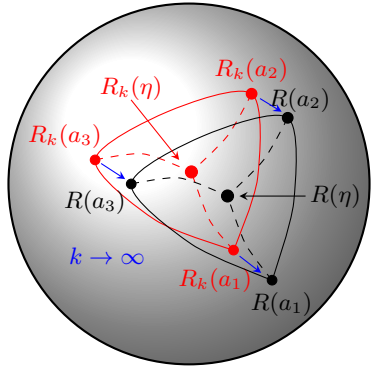

\begin{proof}
 Let $a_1,\dots, a_N \in \omega$ be the Lagrange points of Definition~\ref{def:lagrange_points},
 and let $\lambda_1,\dots,\lambda_N$ be the corresponding scalar-valued Lagrange basis.
 Recalling the global definition in Remark~\ref{rem:global_space_definitions}
 of a geodesic finite element space,
 we need to show that for any $\eta \in \omega$ we have
 \begin{equation}
 \label{eq:limit_is_minimizer}
  \sum_{i=1}^N \lambda_i(\eta)\, d(R(\eta), R(a_i))^2
  \le
  \sum_{i=1}^N \lambda_i(\eta)\, d(\hat{R}, R(a_i))^2
 \end{equation}
 for any $\hat{R} \in \mathcal{M}$.

 For simplicity we omit the $\eta$-dependence of the $\lambda_i$. Suppose the inequality~\eqref{eq:limit_is_minimizer}
 is false. Then there is an $R^* \in \mathcal{M}$ with
 \begin{equation*}
  \sum_{i=1}^N \lambda_i \,d(R^*, R(a_i))^2
  <
  \sum_{i=1}^N \lambda_i \,d(R(\eta), R(a_i))^2.
 \end{equation*}
 Let $\epsilon$ be a particular fraction of the difference:
 \begin{equation*}
  \epsilon
  \colonequals
  \frac{1}{3N} \bigg[ \sum_{i=1}^N \lambda_i \,d(R(\eta), R(a_i))^2 - \sum_{i=1}^N \lambda_i \,d(R^*,R(a_i))^2\bigg].
 \end{equation*}
 We then look at a single addend. If $\lambda_i \ge 0$ we have
 \begin{align*}
  \lambda_i \,d(R^*, R_k(a_i))^2
  & \le
  \lambda_i \Big[ \,d(R^*,R(a_i)) + \,d(R(a_i), R_k(a_i))\Big]^2 \\
  & =
  \lambda_i \Big[ \,d(R^*,R(a_i))^2
              + 2 \,d(R^*,R(a_i)) \,d(R(a_i),R_k(a_i)) \\
  & \qquad \qquad \qquad \qquad \qquad \qquad \qquad \qquad          + \,d(R(a_i), R_k(a_i))^2 \Big].
 \end{align*}
 As $R_k(a_i) \to R(a_i)$ we can pick $k$ large enough such that for all $i=1,\dots,N$ we have
 \begin{equation*}
  2 \lambda_i \,d(R^*,R(a_i)) \,d(R(a_i),R_k(a_i)) + \lambda_i \,d(R(a_i), R_k(a_i))^2
  <
  \epsilon.
 \end{equation*}
 If, on the other hand, $\lambda_i < 0$, we bound
 \begin{align*}
  \lambda_i \,d(R^*, R_k(a_i))^2
  & \le
  \lambda_i \Big[ d(R^*,R(a_i)) - d(R(a_i), R_k(a_i))\Big]^2 \\
  & =
  \lambda_i \Big[ d(R^*,R(a_i))^2
              - 2 d(R^*,R(a_i)) \,d(R(a_i),R_k(a_i)) \\
  & \qquad \qquad \qquad \qquad \qquad \qquad \qquad \qquad             + d(R(a_i), R_k(a_i))^2 \Big].
 \end{align*}
 Again, for all $k$ large enough we can assume that
 \begin{equation*}
  -2 \lambda_i \, d(R^*,R(a_i)) \,d(R(a_i),R_k(a_i)) + \lambda_i \,d(R(a_i), R_k(a_i))^2
  <
  \epsilon.
 \end{equation*}
 Summing up we get
 \begin{align*}
  \sum_{i=1}^N \lambda_i \,d(R^*, R_k(a_i))^2
  & \le
  \sum_{i=1}^N \lambda_i \,d(R^*, R(a_i))^2 + N\epsilon \\
  & <
  \sum_{i=1}^N \lambda_i \,d(R(\eta), R(a_i))^2 - N\epsilon.
 \end{align*}
 By continuity of the distance function, we can choose $k$ large enough such that
 \begin{equation*}
  \sum_{i=1}^N \lambda_i \,d(R_k(\eta), R_k(a_i))^2
  >
  \sum_{i=1}^N \lambda_i \,d(R(\eta), R(a_i))^2 - \epsilon.
 \end{equation*}
 Then
 \begin{equation*}
  \sum_{i=1}^N \lambda_i \,d(R^*, R_k(a_i))^2
  <
  \sum_{i=1}^N \lambda_i \,d(R_k(\eta), R_k(a_i))^2.
 \end{equation*}
 This is a contradiction to the definition of $R_k$: As it is a geodesic finite element
 function, $R_k(\eta)$ must be a minimizer of $\sum_{i=1}^N \lambda_i\,d(\cdot,R_k(a_i))^2$.
\end{proof}

\subsection{The algebraic problem}

For the numerical minimization of the Cosserat shell energy we introduce an algebraic formulation.
For standard finite elements there is a bijective correspondence between finite element functions and
coefficient vectors, via the representations of the functions with respect to a basis.
For geometric finite elements this bijection holds only locally.
The details are explained,
for example, in~\cite{sander:2013,sander_neff_birsan:2016}.

Let $a_1, \dots,a_{N_1}$ be the global set of Lagrange nodes
for the $p_1$-th order Lagrange finite element space on $\omega$ with respect to the triangulation $\mathcal{T}$.
We introduce the evaluation operator
\begin{equation*}
\mathcal{E}^{\R^3}: S_{h} (\omega, \R^3) \to (\R^3)^{N_1},
\qquad
\mathcal{E}^{\R^3}(\bm_h)_i \colonequals \bm_h(a_i), \quad i = 1, \dots, N_1.
\end{equation*}
Similarly, for the global set of nodes $a_1, \dots,a_{N_2}$ of a Lagrange space
of order $p_2$ we define the evaluation operator
\begin{equation*}
 \mathcal{E}^{\SOdrei} : S_h (\omega, \SOdrei) \to \SOdrei^{N_2},
 \qquad
 \mathcal{E}^{\SOdrei}(\Qeh)_i \colonequals \Qeh(a_i),
 \qquad
 i= 1,\dots,N_2.
\end{equation*}
To construct an algebraic formulation of the shell problem we need the inverse operators
$\big(\mathcal{E}^{\R^3}\big)^{-1}$ and $\big(\mathcal{E}^{\SOdrei}\big)^{-1}$, which associate
functions to sets of values at the Lagrange points.
The operator $\mathcal{E}^{\R^3}$ is a bijection and hence
$(\mathcal{E}^{\R^3})^{-1}$ is uniquely defined on all of $(\R^3)^{N_1}$.
In contrast, $\mathcal{E}^{\SOdrei}$
is not defined for all sets of coefficients
from $\SOdrei^{N_2}$, because the interpolation rules of Section~\ref{sec:geometric_interpolation_rules} used to construct
finite element functions from coefficients may fail to produce a
continuous finite element function, or they may produce more than one
from a single set of coefficients.
Nevertheless, as shown in~\cite{sander:2012} the evaluation operator
$\mathcal{E}^{\SOdrei}$ is invertible locally under reasonable circumstances, and one can use it
to define the algebraic Cosserat shell energy
\begin{equation}
\label{eq:algebraic_shell_energy}
 I_\textup{alg} \; : D_\textup{alg} \to \R,
 \qquad
 I_\textup{alg}(\overline{\bm},\overline{\boldsymbol{Q}}_e)
 \colonequals
 I\big((\mathcal{E}^{\R^3})^{-1}(\overline{\bm}),\: (\mathcal{E}^{\SOdrei})^{-1}(\overline{\bQ}_e)\big),
\end{equation}
where $D_\textup{alg}$ is a subset of $(\R^3)^{N_1} \times \SOdrei^{N_2}$, and $I$ is the functional~\eqref{eq:finite_strain_energy}.
The algebraic Cosserat shell problem then is:
\begin{problem}[Algebraic Cosserat shell problem]\label{prob:algebraic-cosserat-shell-problem}
Find a pair $(\overline{\bm}$, $\overline{\bQ}_e) \in D_\textup{alg}$
that minimizes $I_\textup{alg}$, subject to suitable Dirichlet conditions.
\end{problem}

Problem~\ref{prob:algebraic-cosserat-shell-problem} is a smooth optimization problem
in the set $D_\text{alg}$, which is an open subset of the manifold $\R^{3N_1} \times \SOdrei^{N_2}$.
Such problems can be solved conveniently and efficiently using algorithms for solving
optimization problems on manifolds~\cite{absil_mahony_sepulchre:2008}.
Details are given in~\cite{sander:2012,sander_neff_birsan:2016}.

The algebraic shell minimization problem naturally inherits the frame-indifference property
\begin{equation*}
 I(R \,\bm, R\,\Qe) = I(\bm,\Qe),
\end{equation*}
(where $R$ is any element of $\SOdrei$, acting on functions in $H^1(\omega,\R^3)$
and $H^1(\omega,\SOdrei)$ by pointwise multiplication) of the continuous and discrete
formulations.
\begin{theorem}
 When omitting the external loads,
 the algebraic shell energy functional~$I_\textup{alg}$ is frame-indifferent in the sense that
\begin{equation*}
  I_\textup{alg}(R\, \overline{\bm}, R \, \overline{\bQ}_e)
  =
  I_\textup{alg}(\overline{\bm},\overline{\bQ}_e),
\end{equation*}
for all $R \in \SOdrei$, which, by an abuse of notation, now act on the components
of $\overline{\bm} \in (\R^3)^{N_1}$ and $\overline{\bQ}_e \in \SOdrei^{N_2}$.
\end{theorem}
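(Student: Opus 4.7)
The plan is to reduce the algebraic statement to the already-established frame-indifference of the continuous/discrete functional $I$, by showing that the two inverse evaluation operators $(\mathcal{E}^{\R^3})^{-1}$ and $(\mathcal{E}^{\SOdrei})^{-1}$ are themselves equivariant under the pointwise action of $\SOdrei$. Once that equivariance is in hand, the result follows by a three-line chain of equalities.

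First, I would unfold the definition~\eqref{eq:algebraic_shell_energy}:
\begin{equation*}
 I_\textup{alg}(R\,\overline{\bm}, R\,\overline{\bQ}_e)
 =
 I\big((\mathcal{E}^{\R^3})^{-1}(R\,\overline{\bm}),\; (\mathcal{E}^{\SOdrei})^{-1}(R\,\overline{\bQ}_e)\big).
\end{equation*}
For the first factor, $(\mathcal{E}^{\R^3})^{-1}$ is ordinary Lagrange interpolation in $\R^3$, which is linear, so $(\mathcal{E}^{\R^3})^{-1}(R\,\overline{\bm}) = R\,(\mathcal{E}^{\R^3})^{-1}(\overline{\bm})$ holds trivially. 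For the second factor, I would invoke Lemma~\ref{lem:equivariance_of_gfe}: if $\Qeh \colonequals (\mathcal{E}^{\SOdrei})^{-1}(\overline{\bQ}_e)$, then $R\,\Qeh$ is again an element of $S_h(\omega,\SOdrei)$, and its nodal values at the Lagrange points $a_i$ are precisely $R\,\Qeh(a_i) = R\,(\overline{\bQ}_e)_i$. Hence $R\,\Qeh$ is a preimage under $\mathcal{E}^{\SOdrei}$ of $R\,\overline{\bQ}_e$, giving $(\mathcal{E}^{\SOdrei})^{-1}(R\,\overline{\bQ}_e) = R\,(\mathcal{E}^{\SOdrei})^{-1}(\overline{\bQ}_e)$ (locally, which is all that is needed inside $D_\textup{alg}$).

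Combining both equivariances with the discrete frame-indifference~\eqref{eq:discrete_frame_indifference}, I obtain
\begin{align*}
 I_\textup{alg}(R\,\overline{\bm}, R\,\overline{\bQ}_e)
 &=
 I\big(R\,(\mathcal{E}^{\R^3})^{-1}(\overline{\bm}),\; R\,(\mathcal{E}^{\SOdrei})^{-1}(\overline{\bQ}_e)\big) \\
 &=
 I\big((\mathcal{E}^{\R^3})^{-1}(\overline{\bm}),\; (\mathcal{E}^{\SOdrei})^{-1}(\overline{\bQ}_e)\big)
 \;=\; I_\textup{alg}(\overline{\bm},\overline{\bQ}_e),
\end{align*}
which is the claim.

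The only non-trivial step is the equivariance of $(\mathcal{E}^{\SOdrei})^{-1}$, and the whole reason Lemma~\ref{lem:equivariance_of_gfe} was stated in the previous section is exactly to make this step automatic; no new calculation is required. A minor technical point worth flagging in the proof is that $(\mathcal{E}^{\SOdrei})^{-1}$ is only a local inverse, so one must remark that if $(\overline{\bm},\overline{\bQ}_e) \in D_\textup{alg}$, then $(R\,\overline{\bm}, R\,\overline{\bQ}_e)$ lies in $D_\textup{alg}$ too, which is immediate because the GFE interpolation problems defining $D_\textup{alg}$ depend only on pairwise distances between the coefficients, and these are preserved under the isometric left action of $\SOdrei$.
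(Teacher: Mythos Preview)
Your proposal is correct and follows exactly the reasoning the paper intends. The paper does not spell out a formal proof; it merely remarks just before the theorem that the algebraic problem ``naturally inherits'' the frame-indifference~\eqref{eq:discrete_frame_indifference} of the discrete formulation, which in turn was established from Lemma~\ref{lem:equivariance_of_gfe} and the continuous frame-indifference~\eqref{eq:frame_indifference}. Your write-up makes this inheritance precise by verifying the equivariance of the two inverse evaluation operators, which is the one step the paper leaves implicit.
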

This important property sets geometric finite element discretizations apart from alternative approaches like~\cite{muench:2007, mueller:2009},
which do not preserve the frame indifference of continuous models.

\section{Numerical experiments}
\label{sec:numerical_experiments}
In this final chapter we demonstrate the capabilities of the shell model and the discretization with a
set of numerical experiments. The software for the experiments was implemented
in C++, based on the \textsc{Dune} libraries%
\footnote{\url{https://dune-project.org}}~\cite{bastian_et_al:2021,sander_dune:2020}.
The code relies heavily on the ADOL-C algorithmic differentiation library \cite{walther_griewank:2012} for the computation of first and second derivatives of the energy functional $I_\text{alg}$, and on the \texttt{dune-curvedgrid} extension module
to the \textsc{Dune} software%
\footnote{\url{https://dune-project.org/modules/dune-curvedgrid}}
\cite{praetorius_stenger:2022}, which provides finite element grids with nonplanar element geometries.
For two-dimensional grids in~$\R^3$ it also provides direct access
to the element normal vector $\bn_0$ and to its derivative.

\subsection{Deflection of a half sphere}
\label{sec:locking_test}
In our first numerical example we want to demonstrate that the discretized model does not suffer
from shear locking. For the planar Cosserat shell model in \cite{sander_neff_birsan:2016}
we showed experimentally that no shear locking occurs unless the deformation field $\bm$ is
discretized with Lagrange finite elements of first order. 
A similar result is obtained here, too, but now the approximation order of the reference shell surface
$\bm_0(\omega)$ comes into play as well.

As the problem setting we consider the upper
half of a sphere of radius $1\,\mathsf{L}$ as the reference configuration $\bm_0(\omega)$
(the $\mathsf{L}$ means length),
such that Gauß and mean curvature are $K = 1\,\mathsf{L}^{-2}$ and $H = 1\,\mathsf{L}^{-1}$
everywhere, respectively.
We clamp the deformation at the equator, but leave the microrotation free of any Dirichlet conditions.
We load the shell
with a constant vertical tensile volume load of $\mathbf{f} = h \cdot (0,0,10^4)\,\mathsf{M}/(\mathsf{L}\mathsf{T}^{2})$,
where $\mathsf{M}$ and $\mathsf{T}$ mean mass and time, respectively.
Note that we scale with the thickness~$h$ to represent a load obtained
by integrating a three-dimensional volume load over the shell thickness~$h$.

The material parameters are $\lambda = 4.4364\cdot10^4\,\mathsf{M}/(\mathsf{L}\mathsf{T}^{2})$
and $\mu = 2.7191\cdot10^4\,\mathsf{M}/(\mathsf{L}\mathsf{T}^{2})$ for the Lamé parameters,
$\mu_c = 0.1 \mu$ for the Cosserat couple modulus, $L_c = 5\cdot10^{-4}\,\mathsf{L}$
for the internal length, and $b_1 = b_2 = 1$, $b_3 = \tfrac{1}{3}$.
With this choice of $b_1$, $b_2$, and $b_3$, the curvature energy density defined in~\eqref{eq:Wcurv} becomes
simply $W_\text{curv}(X) = \mu L_c^2 \norm{X}^2$.

We discretize the deformation functions $\bm_0$ and $\bm$ by Lagrange finite elements, and the microrotation function by geometric finite elements based on geodesic interpolation.
For the discrete shell reference geometry $\bm_{0,h}$ we use elements of first and second order.
This means that the element geometries of the finite element grid are either flat triangles or second-order polynomials, constructed such that the triangle corners (and the edge midpoints in the second-order case) lie on the half sphere.
Recall that when using flat triangles, all curvature measures in the
energy~\eqref{eq:finite_strain_energy} vanish.

We run simulations for six combinations of approximation orders for the reference deformation $\bm_0$,
the deformation $\bm$, and the microrotation field $\Qe$:
\bigskip
\begin{center}
 \begin{tabular}{l|llllll}
  scenario & 1 & 2 & 3 & 4 & 5 & 6 \\
  \hline
  order: reference deformation $\bm_{0,h}$ & \color{lightgreen}1 & \color{mediumgreen}1 & \color{darkgreen}1 & \color{gray}2 & \color{lightblue}2 & \color{mediumblue}2 \\
  order: deformation $\bm_h$               & \color{lightgreen}1 & \color{mediumgreen}2 & \color{darkgreen}2 & \color{gray}1 & \color{lightblue}2 & \color{mediumblue}2 \\
  order:  microrotation $\Qeh$              & \color{lightgreen}1 & \color{mediumgreen}1 & \color{darkgreen}2 & \color{gray}1 & \color{lightblue}1 & \color{mediumblue}2.
 \end{tabular}
\end{center}
\medskip
Of the eight possible ways to combine first- and second-order approximations we omit only the cases
with first-order deformation and second-order microrotation. The reason is that in the continuous model
the third column of the microrotation can be interpreted as an approximation of the normal of $\bm(\omega)$, which is a
first-order derivative of $\bm$. Hence, when approximating the deformation by piecewise polynomials
of a given order, the exact normal is a piecewise polynomial of one order lower.
Consequently, the most natural choice of approximation order is for the deformation to have
one order more than the microrotation. In addition, we test the case of equal orders.

\begin{figure}
	\begin{tabular}{c c c}
		$24$ elements & $24 \cdot 4^2$ elements & $24 \cdot 4^4$ elements \\
		\hline
		\hline
		\\
		\includegraphics[width=0.3\textwidth]{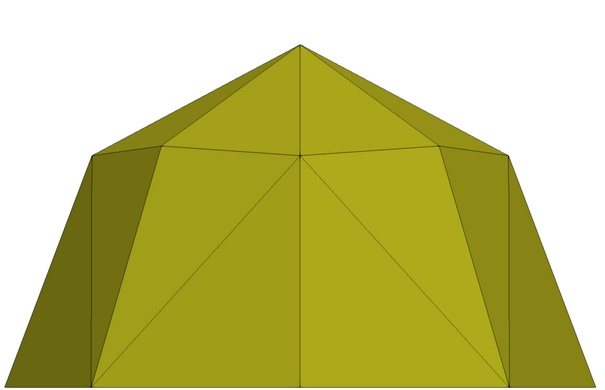}\quad &\includegraphics[width=0.3\textwidth]{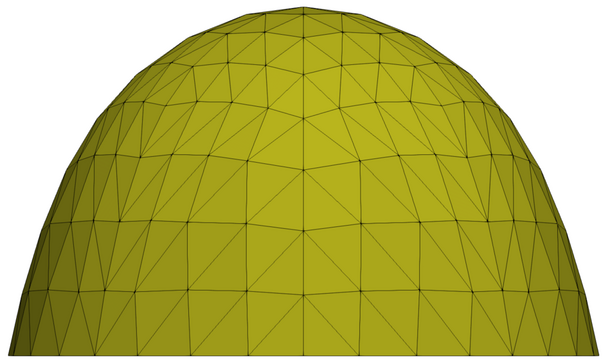}\quad &\includegraphics[width=0.3\textwidth]{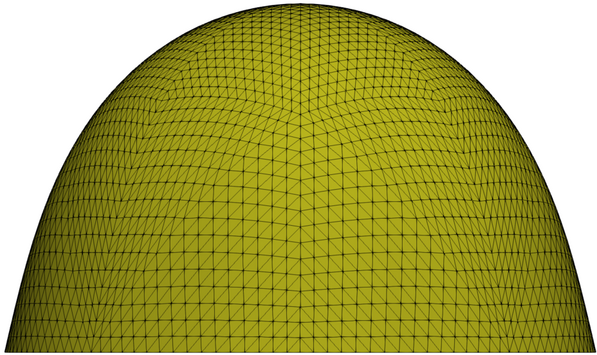}\\
		\multicolumn{3}{c}{\color{lightgreen} reference deformation order: 1, deformation order: 1, microrotation order: 1} \\
		\hline
		\\
		\includegraphics[width=0.3\textwidth]{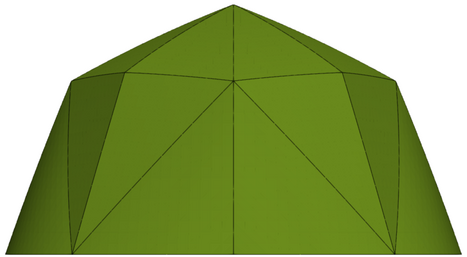}\quad &\includegraphics[width=0.3\textwidth]{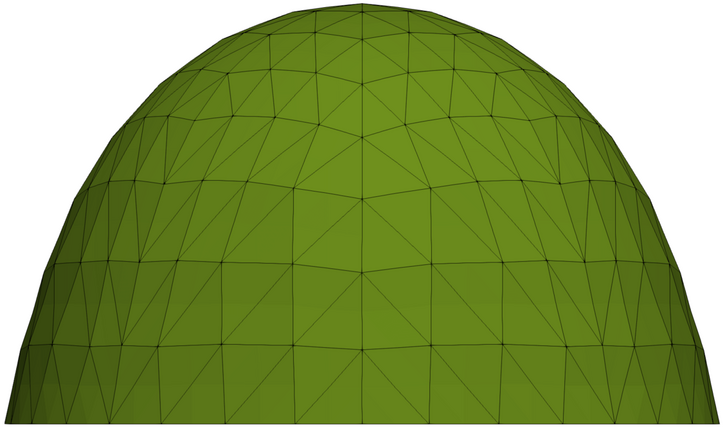}\quad &\includegraphics[width=0.3\textwidth]{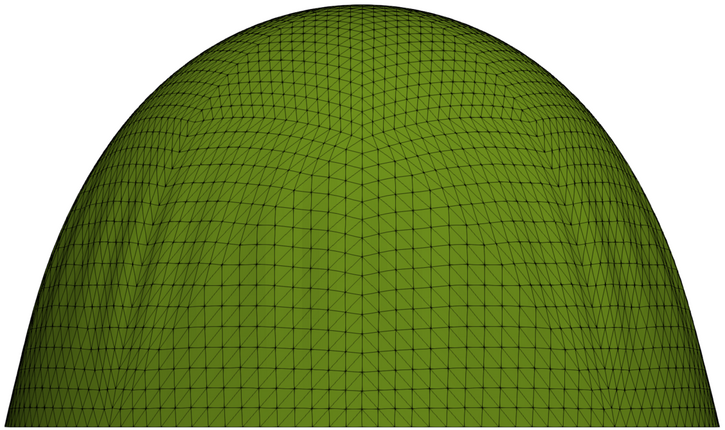}\\
		\multicolumn{3}{c}{\color{mediumgreen} reference deformation order: 1, deformation order: 2, microrotation order: 1} \\
		\hline
		\\ \includegraphics[width=0.3\textwidth]{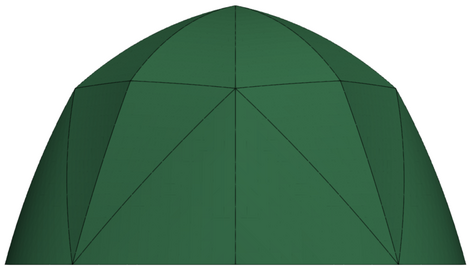}\quad &\includegraphics[width=0.3\textwidth]{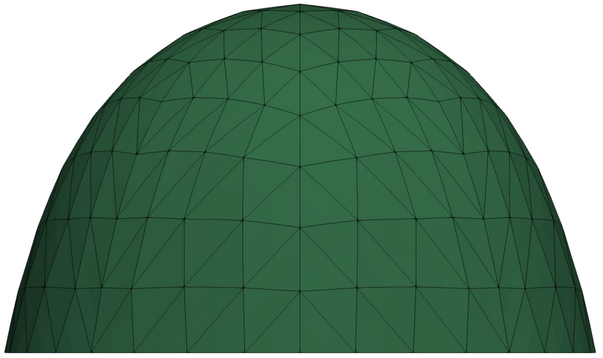}\quad &\includegraphics[width=0.3\textwidth]{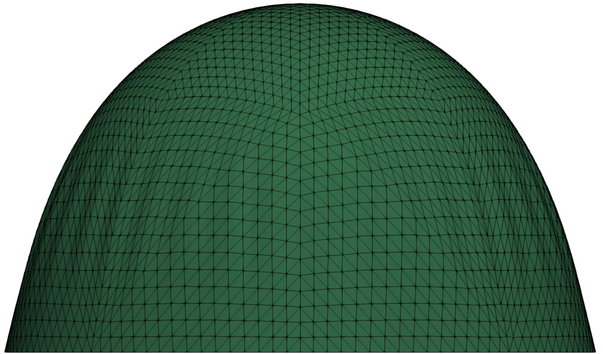}\\
		\multicolumn{3}{c}{\color{darkgreen} reference deformation order: 1, deformation order: 2, microrotation order: 2} \\
		\hline
		\\
		\includegraphics[width=0.3\textwidth]{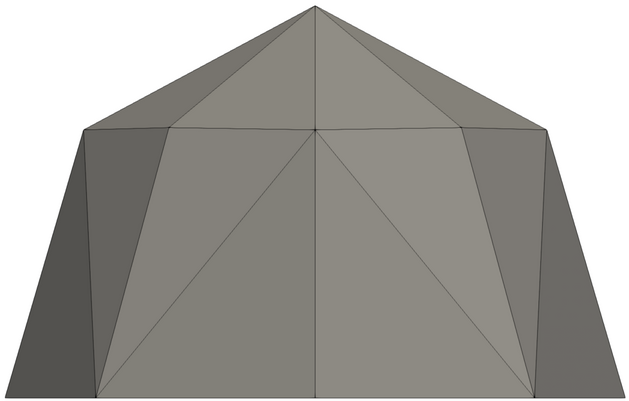}\quad & \includegraphics[width=0.3\textwidth]{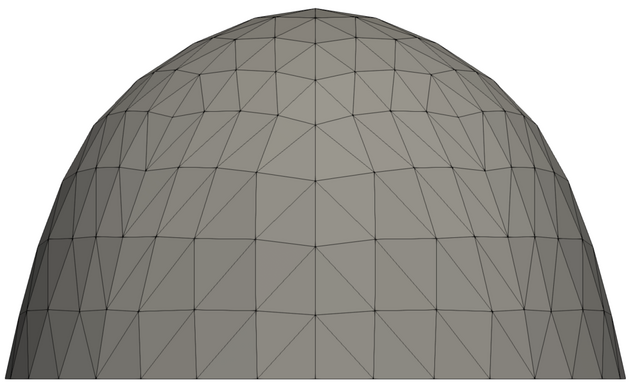}\quad& \includegraphics[width=0.3\textwidth]{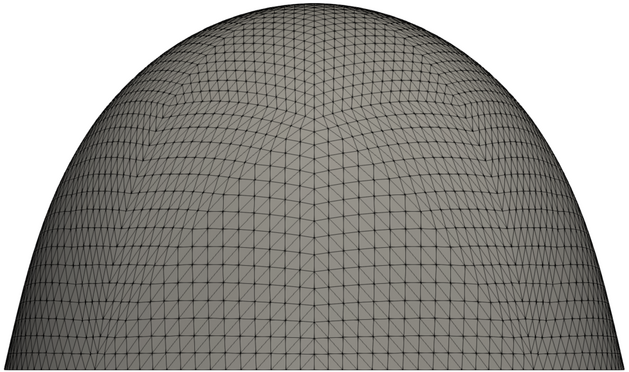}\\
		\multicolumn{3}{c}{\color{gray} reference deformation order: 2, deformation order: 1, microrotation order: 1} \\
		\hline
		\\
		\includegraphics[width=0.3\textwidth]{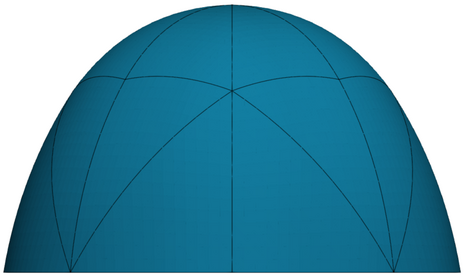}\quad & \includegraphics[width=0.3\textwidth]{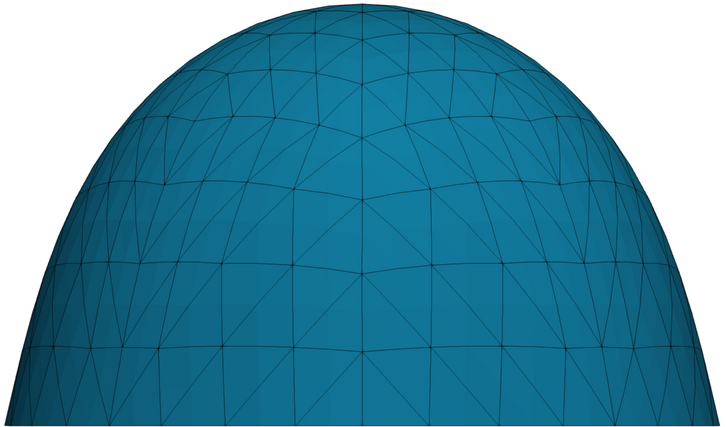}\quad& \includegraphics[width=0.3\textwidth]{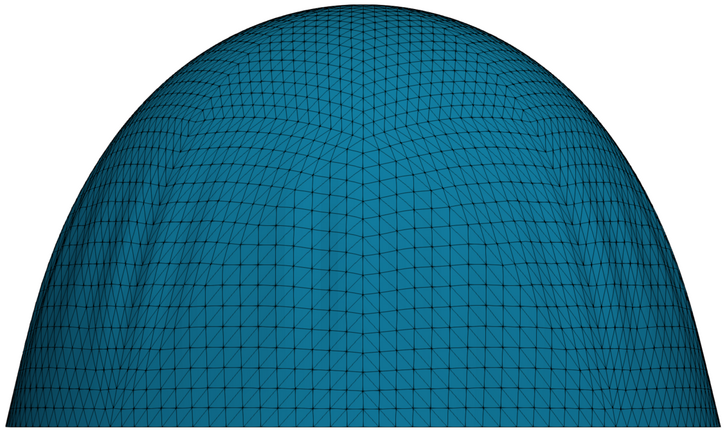}\\
		\multicolumn{3}{c}{\color{lightblue} reference deformation order: 2, deformation order: 2, microrotation order: 1} \\
		\hline
		\\
		\includegraphics[width=0.3\textwidth]{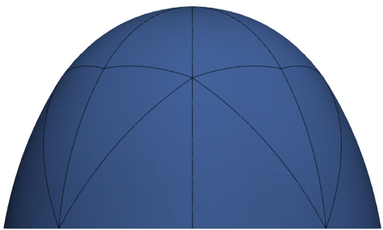}\quad & \includegraphics[width=0.3\textwidth]{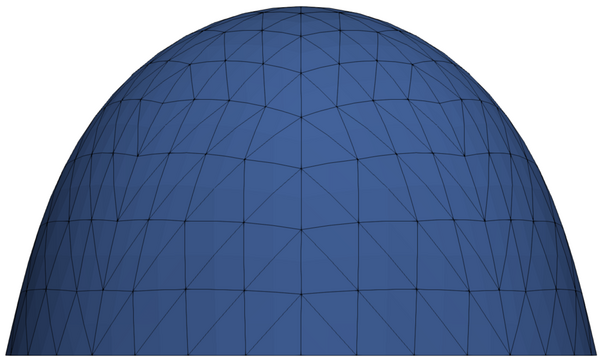}\quad& \includegraphics[width=0.3\textwidth]{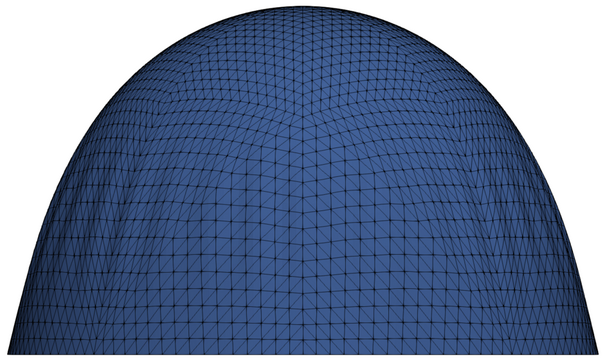} \\
		\multicolumn{3}{c}{\color{mediumblue} reference deformation order: 2, deformation order: 2, microrotation order: 2} \\
	\end{tabular}
	\caption{Deflection of a half sphere under vertical tensile load, modelled by a shell
	with thickness $h = 10^{-3}\,\mathsf{L}$ using different choices of approximation orders
	for the geometry $\bm_{0,h}$, for the deformation $\bm_h$ and for the microrotation $\Qeh$}
	\label{fig:deflection}
\end{figure}

Figure~\ref{fig:deflection}
shows qualitative results for a shell thickness of $h = 10^{-3}\,\mathsf{L}$ and three different grid resolutions,
namely grids with $24$ elements, $24 \cdot 4^2$ elements, and $24 \cdot 4^4$ elements.
All six combinations of finite element orders seem to agree on the vertical deflection
for the medium and high grid resolutions. With the highly resolved grid, the combinations where
the deformation~$\bm$ is approximated with finite element functions of second order
show small vertical wrinkles, which can be expected in this load scenario.
The simulations with first-order deformations do not show wrinkles.

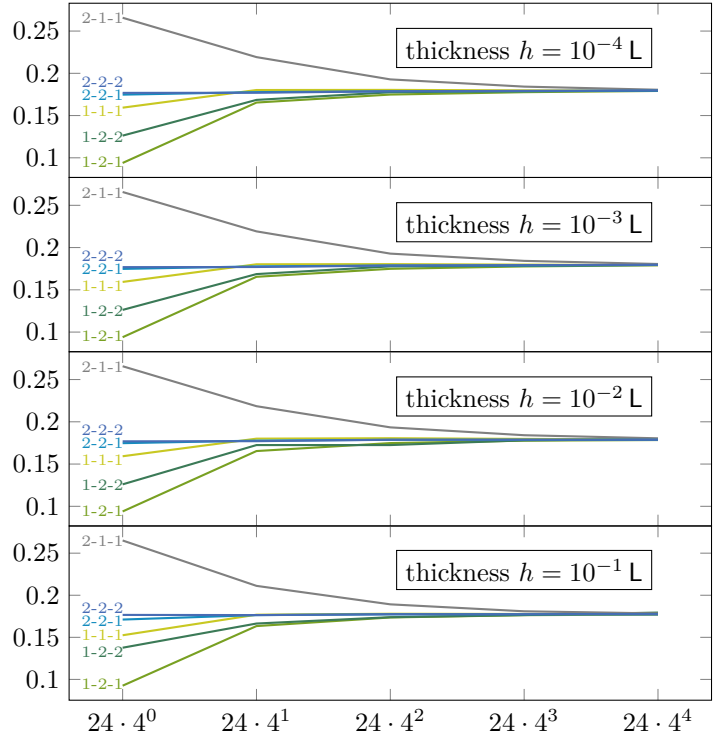
\begin{figure}
	\begin{tikzpicture}
		\begin{groupplot}[
			group style={
				group size=1 by 4,
				x descriptions at=edge bottom,
				vertical sep=0pt,
			},
			xtick = data,
			xticklabels={$24\cdot 4^0$, $24\cdot 4^1$, $24\cdot 4^2$, $24\cdot 4^3$, $24\cdot 4^4$},
			xmode=normal]

			\nextgroupplot[ymode=normal, height=0.19\textheight,width=0.8\textwidth]
			\node[draw=black,fill=white] at (axis cs:4,0.23) {{thickness $h=10^{-4}\,\mathsf{L}$}};
			\node at (axis cs:0.85,0.265){\tiny\color{gray}2-1-1};
			\node at (axis cs:0.85,0.19){\tiny\color{mediumblue}2-2-2};
			\node at (axis cs:0.85,0.175){\tiny\color{lightblue}2-2-1};
			\node at (axis cs:0.85,0.155){\tiny\color{lightgreen}1-1-1};
			\node at (axis cs:0.85,0.125){\tiny\color{darkgreen}1-2-2};
			\node at (axis cs:0.85,0.095){\tiny\color{mediumgreen}1-2-1};
			\addplot[color=lightgreen,thick] table[col sep=comma,header=true, x index=0,y index=1] {locking-muc-0.1mu-thickness-0.0001.csv};
			\addplot[color=mediumgreen,thick] table[col sep=comma,header=true, x index=0,y index=2] {locking-muc-0.1mu-thickness-0.0001.csv};
			\addplot[color=darkgreen,thick] table[col sep=comma,header=true, x index=0,y index=3] {locking-muc-0.1mu-thickness-0.0001.csv};
			\addplot[color=gray,thick] table[col sep=comma,header=true, x index=0,y index=4] {locking-muc-0.1mu-thickness-0.0001.csv};
			\addplot[color=lightblue,thick] table[col sep=comma,header=true, x index=0,y index=5] {locking-muc-0.1mu-thickness-0.0001.csv};
			\addplot[color=mediumblue,thick] table[col sep=comma,header=true, x index=0,y index=6] {locking-muc-0.1mu-thickness-0.0001.csv};
			
			\nextgroupplot[ymode=normal,height=0.19\textheight,width=0.8\textwidth]
			\node[draw=black,fill=white] at (axis cs:4,0.23) {{thickness $h=10^{-3}\,\mathsf{L}$}};
			\node at (axis cs:0.85,0.265){\tiny\color{gray}2-1-1};
			\node at (axis cs:0.85,0.19){\tiny\color{mediumblue}2-2-2};
			\node at (axis cs:0.85,0.175){\tiny\color{lightblue}2-2-1};
			\node at (axis cs:0.85,0.155){\tiny\color{lightgreen}1-1-1};
			\node at (axis cs:0.85,0.125){\tiny\color{darkgreen}1-2-2};
			\node at (axis cs:0.85,0.095){\tiny\color{mediumgreen}1-2-1};
			\addplot[color=lightgreen,thick] table[col sep=comma,header=true, x index=0,y index=1] {locking-muc-0.1mu-thickness-0.001.csv};
			\addplot[color=mediumgreen,thick] table[col sep=comma,header=true, x index=0,y index=2] {locking-muc-0.1mu-thickness-0.001.csv};
			\addplot[color=darkgreen,thick] table[col sep=comma,header=true, x index=0,y index=3] {locking-muc-0.1mu-thickness-0.001.csv};
			\addplot[color=gray,thick] table[col sep=comma,header=true, x index=0,y index=4] {locking-muc-0.1mu-thickness-0.001.csv};
			\addplot[color=lightblue,thick] table[col sep=comma,header=true, x index=0,y index=5] {locking-muc-0.1mu-thickness-0.001.csv};
			\addplot[color=mediumblue,thick] table[col sep=comma,header=true, x index=0,y index=6] {locking-muc-0.1mu-thickness-0.001.csv};
			
			\nextgroupplot[ymode=normal,height=0.19\textheight,width=0.8\textwidth]
			\node[draw=black,fill=white] at (axis cs:4,0.23) {{thickness $h=10^{-2}\,\mathsf{L}$}};
			\node at (axis cs:0.85,0.265){\tiny\color{gray}2-1-1};
			\node at (axis cs:0.85,0.19){\tiny\color{mediumblue}2-2-2};
			\node at (axis cs:0.85,0.175){\tiny\color{lightblue}2-2-1};
			\node at (axis cs:0.85,0.155){\tiny\color{lightgreen}1-1-1};
			\node at (axis cs:0.85,0.125){\tiny\color{darkgreen}1-2-2};
			\node at (axis cs:0.85,0.095){\tiny\color{mediumgreen}1-2-1};
			\addplot[color=lightgreen,thick] table[col sep=comma,header=true, x index=0,y index=1] {locking-muc-0.1mu-thickness-0.01.csv};
			\addplot[color=mediumgreen,thick] table[col sep=comma,header=true, x index=0,y index=2] {locking-muc-0.1mu-thickness-0.01.csv};
			\addplot[color=darkgreen,thick] table[col sep=comma,header=true, x index=0,y index=3] {locking-muc-0.1mu-thickness-0.01.csv};
			\addplot[color=gray,thick] table[col sep=comma,header=true, x index=0,y index=4] {locking-muc-0.1mu-thickness-0.01.csv};
			\addplot[color=lightblue,thick] table[col sep=comma,header=true, x index=0,y index=5] {locking-muc-0.1mu-thickness-0.01.csv};
			\addplot[color=mediumblue,thick] table[col sep=comma,header=true, x index=0,y index=6] {locking-muc-0.1mu-thickness-0.01.csv};
			
			\nextgroupplot[ymode=normal,height=0.19\textheight,width=0.8\textwidth]
			\node[draw=black,fill=white] at (axis cs:4,0.23) {{thickness $h=10^{-1}\,\mathsf{L}$}};
			\node at (axis cs:0.85,0.265){\tiny\color{gray}2-1-1};
			\node at (axis cs:0.85,0.185){\tiny\color{mediumblue}2-2-2};
			\node at (axis cs:0.85,0.17){\tiny\color{lightblue}2-2-1};
			\node at (axis cs:0.85,0.153){\tiny\color{lightgreen}1-1-1};
			\node at (axis cs:0.85,0.133){\tiny\color{darkgreen}1-2-2};
			\node at (axis cs:0.85,0.095){\tiny\color{mediumgreen}1-2-1};
			\addplot[color=lightgreen,thick] table[col sep=comma,header=true, x index=0,y index=1] {locking-muc-0.1mu-thickness-0.1.csv};
			\addplot[color=mediumgreen,thick] table[col sep=comma,header=true, x index=0,y index=2] {locking-muc-0.1mu-thickness-0.1.csv};
			\addplot[color=darkgreen,thick] table[col sep=comma,header=true, x index=0,y index=3] {locking-muc-0.1mu-thickness-0.1.csv};
			\addplot[color=gray,thick] table[col sep=comma,header=true, x index=0,y index=4] {locking-muc-0.1mu-thickness-0.1.csv};
			\addplot[color=lightblue,thick] table[col sep=comma,header=true, x index=0,y index=5] {locking-muc-0.1mu-thickness-0.1.csv};
			\addplot[color=mediumblue,thick] table[col sep=comma,header=true, x index=0,y index=6] {locking-muc-0.1mu-thickness-0.1.csv};
		\end{groupplot}
	\end{tikzpicture}
	
	\caption{Vertical displacement of the north pole as a function of the number of grid elements,
	for four different thicknesses $h$ and different approximation orders for the element geometry
	and the finite element functions.
	The labels of the graphs show: reference deformation order -- deformation order -- microrotation order.}
	\label{fig:deflection_result}
\end{figure}

When looking at the coarse grid results, one can see that the vertical deflection sometimes deviates
from the one of the finer grids. To investigate this phenomenon more closely, we measure the deflection
of the north pole of the half sphere resulting from the applied load.
Figure~\ref{fig:deflection_result} shows the deflection as a function of the grid resolution.
In addition to $h = 10^{-3}\,\mathsf{L}$, the figure shows the same measurements for the
thickness values $h = 10^{-4}\,\mathsf{L}$, $h = 10^{-2}\,\mathsf{L}$, and $h = 10^{-1}\,\mathsf{L}$.
The behavior is virtually identical for all four thicknesses: The vertical deflection
is correctly represented even on the coarser grids if the reference configuration~$\bm_0$ and
the deformation $\bm$ are both discretized with finite elements of second order.
Almost all other combinations are too stiff if the grid is too coarse, which is the
classical locking phenomenon to be expected for badly chosen finite elements.

The one exception is the case with a second-order approximation of the reference geometry
together with a first-order approximation of the deformation~$\bm$, shown in {\color{gray}gray}. Here, the vertical
deflection is too large, and convergence to the correct value for increasing grid resolution
is slower than for the locking cases. Note that in this particular situation, the discrete deformation
cannot assume the stress-free configuration---a residual pre-stress remains. The exact
mechanism of how this pre-stress leads to excessive softness is unclear.

\subsection{Comparison with alternative models}
\label{sec:comparison_with_3d_model}
Next, we compare the shell model with the related model of \textcite{birsan:2021} briefly described in Remark~\ref{rem:birsan_energy}.  For the comparison, we use the model problem from the previous section
with the same material parameters, i.e., $\lambda = 4.4364\cdot10^4\,\mathsf{M}/(\mathsf{L}\mathsf{T}^{2})$
and $\mu = 2.7191\cdot10^4\,\mathsf{M}/(\mathsf{L}\mathsf{T}^{2})$ for the Lamé parameters,
$\mu_c = 0.1 \mu$ for the Cosserat couple modulus, $L_c = 5\cdot10^{-4}\,\mathsf{L}$ and $b_1 = b_2 = 1$, $b_3 = \tfrac{1}{3}$.
This time we test the model with several different volume loads in the range between
$\mathbf{f} = 0.2 \cdot h \cdot (0,0,10^4)\,\mathsf{M}/(\mathsf{L}\mathsf{T}^{2})$
and $\mathbf{f} = 1.5 \cdot h \cdot (0,0,10^4)\,\mathsf{M}/(\mathsf{L}\mathsf{T}^{2})$.
We clamp the deformation~$\bm_h$ at the equator;
the microrotation function is not subject to Dirichlet conditions at all.
We compare the two models for six thickness values between $h = 10^{-4}\,\mathsf{L}$ and $h = 0.4\,\mathsf{L}$.
With the principal curvatures $\kappa_1 = \kappa_2 = 1$ for this example,
these thickness values are all below the curvature bounds $h \abs{\kappa_1}< \frac{1}{2}$
and $h \abs{\kappa_2} < \frac{1}{2}$ of Theorem~\ref{thm:neff_existence}.

The reference deformation $\bm_{0,h}$ and the deformation $\bm_h$ are represented
by Lagrange finite elements of second order and the microrotations are discretized
by geodesic finite elements of first order.  As shown in the previous section this choice
avoids shear locking.
We discretize the shell surface $\omega$ using a grid consisting of $24 \cdot 4^5$ second-order triangle
elements.

\begin{figure}
	 \begin{center}
	 	\begin{tabular}{c c c}
	 		$h= 10^{-4}\,\mathsf{L}$& $h=10^{-3}\,\mathsf{L}$ & $h=10^{-2}\,\mathsf{L}$
	 		\\\hline\hline \\
	 		\includegraphics[width=0.3\textwidth]{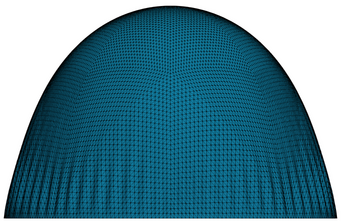}
	 		&\includegraphics[width=0.3\textwidth]{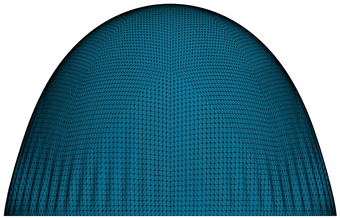}&\includegraphics[width=0.3\textwidth]{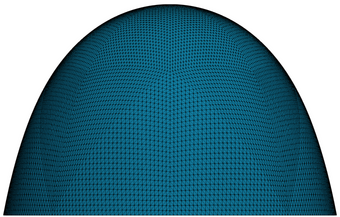}\\
	 		\multicolumn{3}{c}{model from Chapter~\ref{sec:continuous_model}} \\
	 		\hline \\
	 		\includegraphics[width=0.3\textwidth]{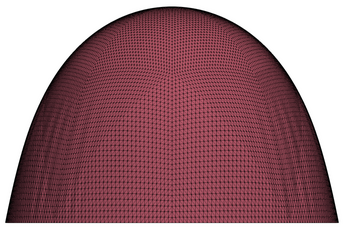}
	 		&\includegraphics[width=0.3\textwidth]{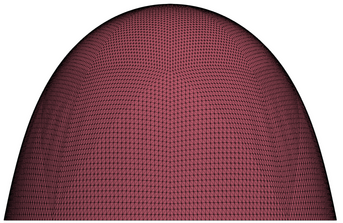}
	 		&\includegraphics[width=0.3\textwidth]{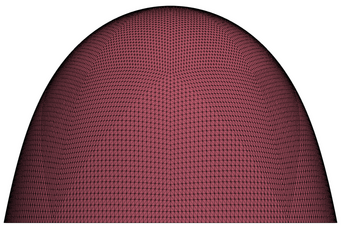}
	 		\\
	 		\multicolumn{3}{c}{alternative model described in Remark~\ref{rem:birsan_energy}} \\
	 		\hline
	 	\end{tabular}
	\caption{Deflection of a half sphere under a vertical tensile load of $\mathbf{f} = 1.5 \cdot h \cdot (0,0,10^4)\,\mathsf{M}/(\mathsf{L}\mathsf{T}^{2})$. The pictures show
	the two Cosserat shell models for different choices of the thickness $h$.}
	\label{fig:comparison}
\end{center}
\end{figure}

\begin{figure}
	\begin{tikzpicture}
		\begin{axis}[height=0.5\textheight,width=0.9\textwidth,xtick = data, xticklabels={{\small $10^{-4}\,\mathsf{L}$},{\small $10^{-3}\,\mathsf{L}$},{\small $10^{-2}\,\mathsf{L}$},{\small $0.1\,\mathsf{L}$},{\small $0.2\,\mathsf{L}$},{ \small $0.4\,\mathsf{L}$}}, xticklabel pos=lower]
	 		\addplot[color=mediumblue,very thick, dashed] table[col sep=comma,header=true,x index=1,y index=2] {model-comparison.csv};
	 		\addplot[color=birsanblue,very thick, dotted] table[col sep=comma,header=true,x index=1,y index=3] {model-comparison.csv};
	 		\addplot[color=3dcolor,very thick] table[col sep=comma,header=true,x index=1,y index=4]
	 		{model-comparison.csv};
			\addplot[color=mediumblue,very thick, dashed] table[col sep=comma,header=true,x index=1,y index=5] {model-comparison.csv};
			\addplot[color=birsanblue,very thick, dotted] table[col sep=comma,header=true,x index=1,y index=6] {model-comparison.csv};
			\addplot[color=3dcolor,very thick] table[col sep=comma,header=true,x index=1,y index=7] {model-comparison.csv};
			\addplot[color=mediumblue,very thick, dashed] table[col sep=comma,header=true,x index=1,y index=8] {model-comparison.csv};
			\addplot[color=birsanblue,very thick, dotted] table[col sep=comma,header=true,x index=1,y index=9] {model-comparison.csv};
			\addplot[color=3dcolor,very thick] table[col sep=comma,header=true,x index=1,y index=10] {model-comparison.csv};
			\addplot[color=mediumblue,very thick, dashed] table[col sep=comma,header=true,x index=1,y index=11] {model-comparison.csv};
			\addplot[color=birsanblue,very thick, dotted] table[col sep=comma,header=true,x index=1,y index=12] {model-comparison.csv};
			\addplot[color=3dcolor,very thick] table[col sep=comma,header=true,x index=1,y index=13] {model-comparison.csv};
			\node[draw=black,fill=white] at (axis cs:2.12,0.25) {\tiny$\mathbf{f} = 1.5 \cdot h \cdot (0,0,10^4)\,\mathsf{M}/(\mathsf{L}\mathsf{T}^{2})$};
			\node[draw=black,fill=white] at (axis cs:2.12,0.163) {\tiny$\mathbf{f} = 1.0 \cdot h \cdot (0,0,10^4)\,\mathsf{M}/(\mathsf{L}\mathsf{T}^{2})$};
			\node[draw=black,fill=white] at (axis cs:2.12,0.084) {\tiny$\mathbf{f} = 0.5 \cdot h \cdot (0,0,10^4)\,\mathsf{M}/(\mathsf{L}\mathsf{T}^{2})$};
			\node[draw=black,fill=white] at (axis cs:2.12,0.027) {\tiny$\mathbf{f} = 0.2 \cdot h \cdot (0,0,10^4)\,\mathsf{M}/(\mathsf{L}\mathsf{T}^{2})$};
			\node at (axis cs:3.41,0.27){\tiny\color{mediumblue}main model};
			\node at (axis cs:3.6,0.305){\tiny\color{birsanblue}alternative model};
			\node at (axis cs:3.33,0.284){\tiny\color{3dcolor}3D-model};
		\end{axis}
	\end{tikzpicture}
\caption{Vertical displacement of the half-sphere north pole as a function of the shell thickness
for four different volume loads. ``{\color{mediumblue}Main model}'' refers to the model
introduced in this manuscript (with the factor $\frac{\mu + \mu_c}{2}$ in the transverse
shear coefficient),
whereas ``{\color{birsanblue}alternative model}'' refers to the variant of~\textcite{birsan:2021}
discussed in Remark~\ref{rem:birsan_energy} (with the factor replaced by
$\frac{2\mu\mu_c}{\mu + \mu_c}$).
``{\color{3dcolor}3D-model}'' is the three-dimensional Cosserat bulk model of~\cite{neff_dim_reduction:2019}
that both shell models are derived from.
}
\label{fig:pole_displacement}
\end{figure}
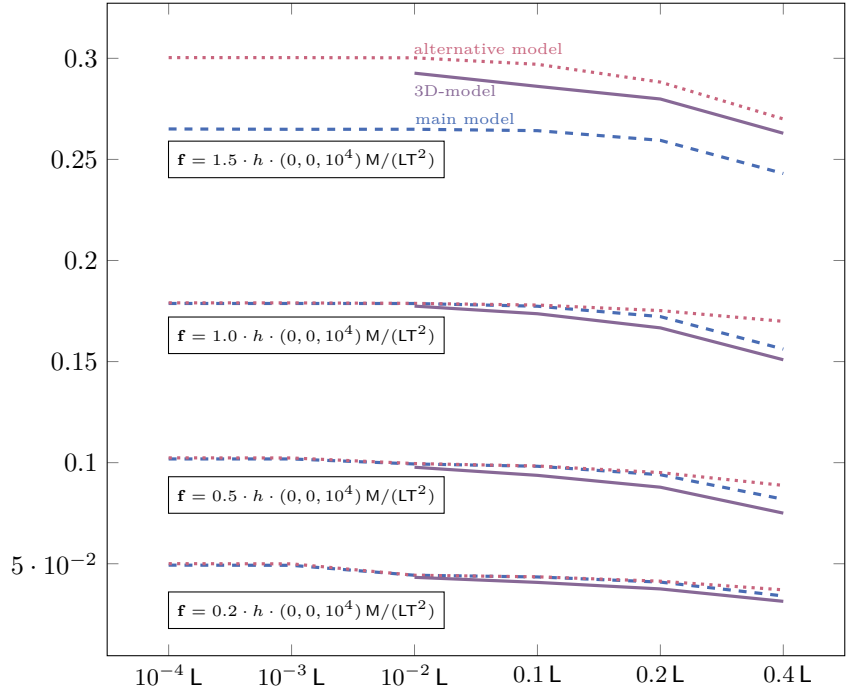

Results for $h = 10^{-4}\,\mathsf{L}$, $10^{-3}\,\mathsf{L}$, and $10^{-2}\,\mathsf{L}$,
and the load $\mathbf{f} = 1.5 \cdot h \cdot (0,0,10^4)\,\mathsf{M}/(\mathsf{L}\mathsf{T}^2)$
are shown in Figure~\ref{fig:comparison}.
We see that both models behave similarly.
For $h = 10^{-4}\,\mathsf{L}$ and $h = 10^{-3}\,\mathsf{L}$, both models show wrinkles,
but the wrinkles are less prominent in the model from~\cite{birsan:2021}.
For $h = 10^{-2}\,\mathsf{L}$ neither model shows wrinkles.

For a quantitative comparison we lower the resolution a little, and discretize the shell surface
using a grid of only $1580$ second-order triangles. Figure~\ref{fig:pole_displacement} shows
the vertical displacement of the north pole of the half sphere resulting from four different applied loads
as a function of the shell thickness.  The
force densities are $\mathbf{f} = \alpha \cdot h \cdot (0,0, 10^4)\,\mathsf{M}/(\mathsf{L}\mathsf{T}^2)$
with $\alpha \in \{0.2, 0.5, 1.0, 1.5\}$.
One can see that for the weaker loads and the smaller thickness values, the deflections
predicted by the two models match almost perfectly.
For thicker shells, the model from~\cite{birsan:2021} is slightly softer.
For a force density of $\mathbf{f} = 1.5 \cdot h \cdot (0,0,10^4)\,\mathsf{M}/(\mathsf{L}\mathsf{T}^{2})$
there is a noticeable difference between the two model responses. Indeed, while both models
show the same qualitative behavior as functions of $h$, the deflection for the model of
\textcite{birsan:2021} is consistently about $0.03\,\mathsf{L}$ larger than
the deflection of the main model for all values of the thickness.
Note however that the strains in this case are well beyond the limits of the linear
material law used in both shell models, and deviations can therefore be expected.

To assess the behavior of the two shell models even better we also
compare with a three-dimensional Cosserat model
with linear material response from \cite{neff_dim_reduction:2019},
the parent model that both shell models were derived from.
It considers a domain $\Omega$ in $\R^3$, and
deformation and microrotation fields
\begin{equation*}
 \bm^\textup{3D} : \Omega \to \R^3,
 \qquad
 \Qe^\textup{3D} : \Omega \to \SOdrei
\end{equation*}
that minimize
\begin{equation}\label{eq:three-dimensional-cosserat-energy}
 I_\text{3D} (\bm^\textup{3D}, \boldsymbol{Q}_e^\textup{3D})
 \colonequals
 \int_{\Omega} \Big[ W^\textup{3D}_\textup{mp}(\overline{\boldsymbol{E}}) +   W^\textup{3D}_\textup{curv}(\boldsymbol{\Gamma}) \Big]\,dV +\text{external loads},
\end{equation}
where
\begin{equation*}
 \overline{\boldsymbol{E}}
 \colonequals
 (\boldsymbol{Q}_e^\textup{3D})^T\left(\nabla \bm^\textup{3D}\right) - \identity_3
\end{equation*}
is the non-symmetric strain tensor, and
\begin{equation*}
  \boldsymbol{\Gamma} \colonequals \axl\Big((\boldsymbol{Q}_e^\textup{3D})^T \frac{\partial \boldsymbol{Q}_e^\textup{3D}}{\partial x_i}\Big) \otimes \ba^i
\end{equation*}
is the wryness tensor. The vectors $\ba^i$ are the contravariant basis vectors, of which there are three now.
The energy terms are \cite[Equation~(2.27)]{neff_dim_reduction:2019}
\begin{equation*}
 W^\textup{3D}_\textup{mp}(\overline{\boldsymbol{E}})
 \colonequals
 \mu\norm{\sym \overline{\boldsymbol{E}}}^2 + \mu_c\norm{\skew \overline{\boldsymbol{E}}}^2 + \tfrac{\lambda}{2} (\tr\overline{\boldsymbol{E}})^2
\end{equation*}
and \cite[Equation~(2.30)]{neff_dim_reduction:2019}
\begin{equation*}
 W^\textup{3D}_\textup{curv}(\boldsymbol{\Gamma})
 \colonequals
 \mu L_c^2\big(b_1\norm{\sym \boldsymbol{\Gamma}}^2 + b_2\norm{\skew \boldsymbol{\Gamma}}^2 + (b_3 - \tfrac{b_1}{3}) (\tr\boldsymbol{\Gamma})^2 \big).
\end{equation*}
With our choice of parameters $b_1$, $b_2$, $b_3$, the latter again reduces to
$W^\textup{3D}_\textup{curv}(\boldsymbol{\Gamma}) = \mu L_c^2 \norm{\boldsymbol{\Gamma}}^2$.

For the three-dimensional domain $\Omega$, we use a half sphere of thickness $h$,
with an outer radius of $1 + \frac{h}{2}$ and an inner radius of $1- \frac{h}{2}$.
We clamp the deformation~$\bm^\textup{3D}$ at the equator, and the microrotation function
$\boldsymbol{Q}_e^\textup{3D}$ is again not subject to Dirichlet conditions.
The domain is discretized using a three-dimensional unstructured simplex grid constructed such that
the element diameters match the mesh size of the two-dimensional grids used for the shells.
For the three-dimensional grid we use flat simplex elements, which is justified
because the three-dimensional Cosserat energy~\eqref{eq:three-dimensional-cosserat-energy}
does not contain terms involving the element geometry curvature.
As simulating very thin shells with a bulk model requires either very distorted elements or a very fine grid
we do not run the three-dimensional simulations for the thickness values $h = 10^{-4}\,\mathsf{L}$
and $h = 10^{-3}\,\mathsf{L}$.

In comparison to the shell models we see that for
the force densities $\mathbf{f} = \alpha \cdot h \cdot (0,0, 10^4)\,\mathsf{M}/(\mathsf{L}\mathsf{T}^2)$,
$\alpha \in \{ 0.2, 0.5, 1.0\}$, the main model approximates the three-dimensional model slightly better. For the thinner shells ($h = 0.01\,\mathsf{L}$ and $h = 0.1\,\mathsf{L}$), both two-dimensional models match the three-dimensional model almost perfectly.
For the force density $\mathbf{f} = 1.5\cdot h \cdot (0,0,10^4)\,\mathsf{M}/(\mathsf{L}\mathsf{T}^{2})$,
the response of the three-dimensional model lies between the shell two models, showing a similar qualitative behavior as a functions of $h$.
The question of whether one of the shell models is better in this regime
remains undecided.

\subsection{Shells with challenging topology}

\begin{figure}
 \begin{center}
\includegraphics[width=0.45\textwidth]{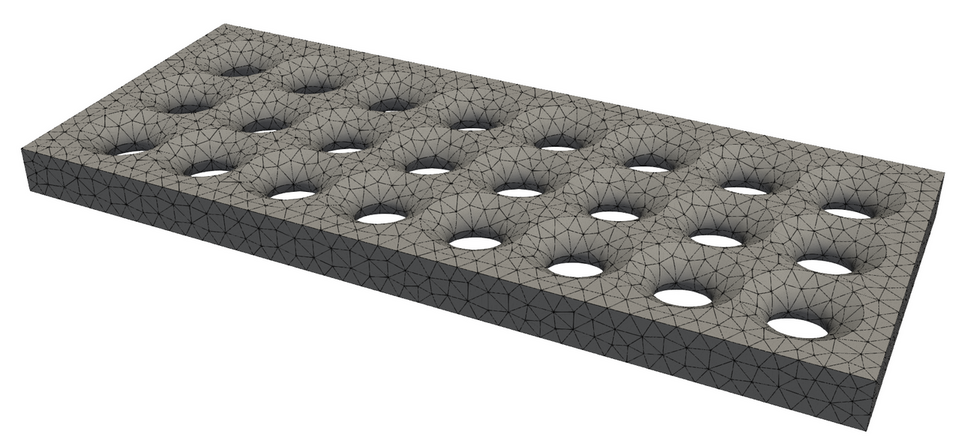}\quad \includegraphics[width=0.45\textwidth]{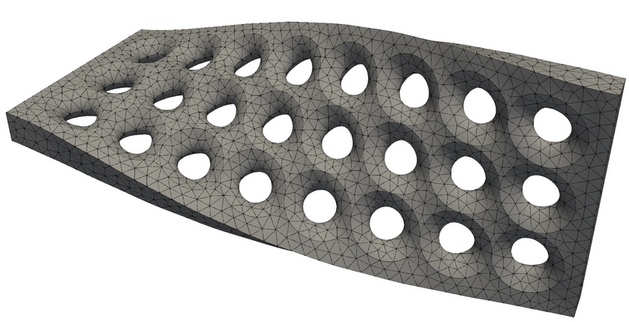}\\ \includegraphics[width=0.45\textwidth]{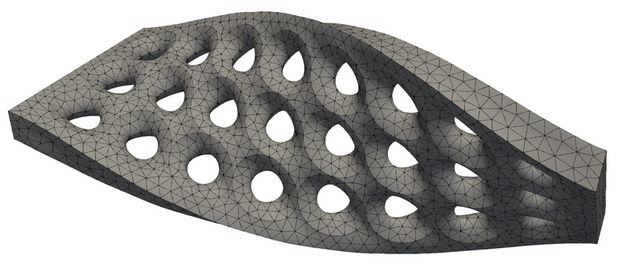}\quad \includegraphics[width=0.45\textwidth]{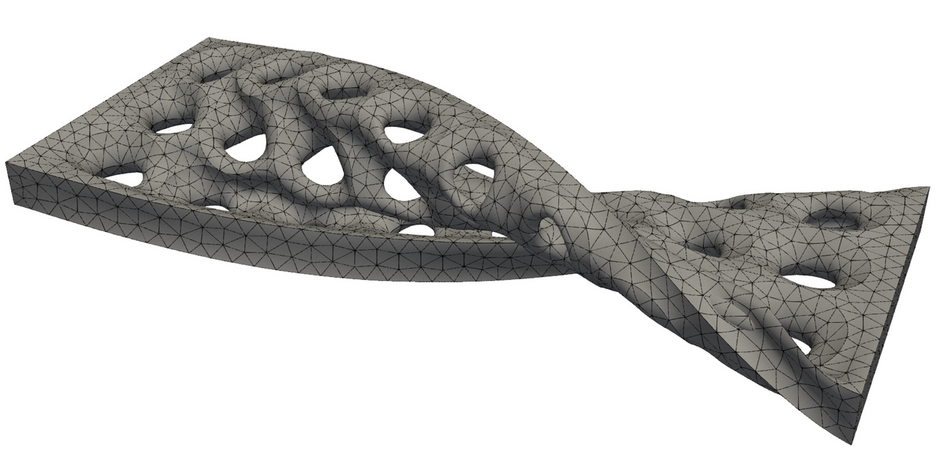}
 \end{center}

 \caption{Deforming the boundary of a rectangular block perforated by 24 holes.
   The boundary is a closed two-dimensional surface of genus~24.  It is
   displacement-loaded to twist around its long axis.
   Note that this object is hollow---we are only simulating the boundary.}
 \label{fig:perforated_plate}
\end{figure}

The next example demonstrates that the shell model and its discretization can handle
surfaces with a challenging topology. For this we consider a rectangular block of dimensions
$8.5\,\mathsf{L} \times 3.5\,\mathsf{L} \times 0.5\,\mathsf{L}$, from which material
has been removed to leave 24 rotation-symmetric holes.  The boundary of the block
is a closed two-dimensional surface of genus~24. We use this surface as the
reference configuration of a Cosserat shell.  The discrete shell model is shown
in the top left picture of Figure~\ref{fig:perforated_plate}---keep in mind that
what looks like a solid three-dimensional object is actually hollow.

For the material behavior we use the same parameters as in the
previous examples, i.e., $\lambda = 4.4364\cdot10^4\,\mathsf{M}/(\mathsf{L}\mathsf{T}^{2})$
and $\mu = 2.7191\cdot10^4\,\mathsf{M}/(\mathsf{L}\mathsf{T}^{2})$ for the Lamé parameters,
$\mu_c = 0.1 \mu$ for the Cosserat couple modulus,
$L_c = 5\cdot10^{-4}\,\mathsf{L}$ for the internal length,
and $b_1 = b_2 = 1$, $b_3 = \tfrac{1}{3}$. We choose a shell thickness of $h=0.05\,\mathsf{L}$.
We discretize the model and its reference configuration $\bm_0$ with a grid
consisting of 5518 second-order triangle elements.  For the deformation $\bm$ we use
second-order Lagrange finite elements, and for the microrotation we use first-order
geodesic finite elements.

To drive the model away from the reference configuration we do not apply a volume load.
Rather, we clamp the deformation and the microrotation at the left short end and apply a displacement load
on the right end.  More specifically, we twist the deformation and microrotation
of the right end to a rotation of $\pi$ around the long axis of the object.

To reach this state we additionally
solve the algebraic minimization Problem~\ref{prob:algebraic-cosserat-shell-problem}
at two intermediate load steps at $\frac{\pi}{2}$ and $\frac{3\pi}{4}$,
starting always from the solutions of the previous load steps.
Figure~\ref{fig:perforated_plate} shows the deformed configurations. One can observe
the uniform twisting expected for this type of load. The finite element model seems
to handle the large rotations without any difficulties.

\subsection{Buckling of a cylinder under a torsion load}

With the next example we demonstrate that the model and discretization can represent
extreme buckling situations.  For this we consider the deformation of a cylinder under a torsion load.
Let $\omega$ be a topological cylinder, with an immersion $\bm_0$
into $\R^3$ as a cylinder of radius~$10\,\mathsf{L}$ and height~$15\,\mathsf{L}$, centered around the $x_3$-axis.
We choose the same material parameters as in the previous examples,
i.e., $\lambda = 4.4364\cdot10^4\,\mathsf{M}/(\mathsf{L}\mathsf{T}^{2})$
and $\mu = 2.7191\cdot10^4\,\mathsf{M}/(\mathsf{L}\mathsf{T}^{2})$ for the Lamé parameters,
$\mu_c = 0.1 \mu$ for the Cosserat couple modulus, $b_1 = b_2 = 1$, $b_3 = \tfrac{1}{3}$, and the internal length
parameter $L_c = 5 \cdot 10^{-4}\,\mathsf{L}$. The thickness is $h= 0.05\,\mathsf{L}$.

We clamp the cylinder deformation and microrotation at a circular strip of width~$3\,\mathsf{L}$
at the bottom, and we apply a torsion displacement load at an identical strip at the top
of the cylinder.
More formally, the Dirichlet set is
\begin{equation*}
 \gamma_d
 =
 \Big\{ \eta \in \omega \; : \; (\bm_0(\eta))_3 \le 3 \quad \text{or} \quad (\bm_0(\eta))_3 \ge 12 \Big\},
\end{equation*}
and we apply the torsion load to all points $\eta \in \omega$ with $(\bm_0(\eta))_3 \ge 12$
by prescribing the microrotation and the $x_1$- and $x_2$-component of the deformation
(but not the $x_3$-component). The respective deformation and microrotation functions are
\begin{equation*}
 \bm_\alpha^* = \bm_\alpha^{3D} \circ \bm_0 :\omega \to \R^3
 \quad\text{and}\quad
 (\Qe)_\alpha^* = (\Qe)_\alpha^{3D} \circ \bm_0 :\omega \to \SOdrei
\end{equation*}
with
\begin{equation*}
 \bm_\alpha^{3D}(x)
  = \begin{pmatrix}x_1\\x_2\\x_3 \end{pmatrix},
  \qquad
 (\Qe)_\alpha^{3D}(x) = \identity_3
\end{equation*}
for all $x \in \R^3$ with $x_3 \le 3$ and
\begin{equation*}
 \bm_\alpha^{3D}(x)
  =
 \begin{pmatrix} x_1\cdot \cos \alpha - x_2 \cdot \sin \alpha \\
  x_1 \cdot \sin \alpha +  x_2 \cdot \cos \alpha \\
  \star
 \end{pmatrix},\quad
  (\Qe)^{3D}_\alpha(x) = \begin{pmatrix} \cos \alpha & - \sin\alpha & 0\\
 	\sin \alpha &\cos \alpha & 0\\
 	0 &0&1 \end{pmatrix}
\end{equation*}
for all $x \in \R^3$ with $x_3 \ge 12$.
The functions $\bm_\alpha^*$ and $(\Qe)_\alpha^*$ represent a counterclockwise rotation of angle~$\alpha$ about the $x_3$-axis.
The $\star$ denotes that the corresponding deformation component is not prescribed.
\begin{figure}
\begin{tabular}{c c c}
	\includegraphics[width=0.25\textwidth]{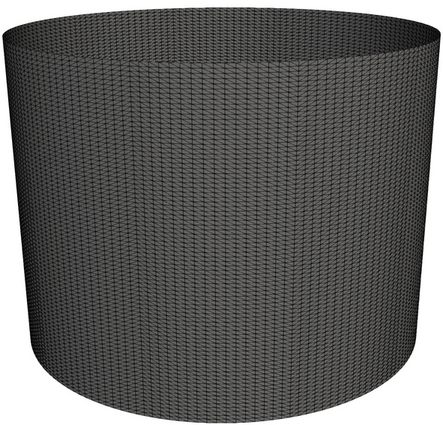} & \includegraphics[width=0.25\textwidth]{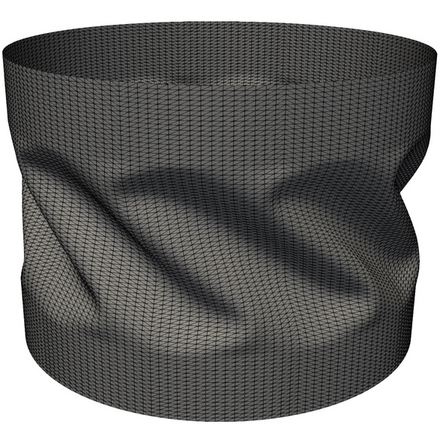} & \includegraphics[width=0.25\textwidth]{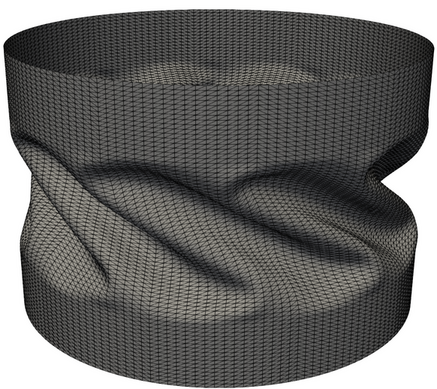} 
	\\\\
	$\alpha = 0\cdot 2\pi$ & $\alpha = -\tfrac{1}{64}\cdot 2\pi$ & $\alpha = -\tfrac{2}{64}\cdot2\pi $ 
	\\
	\includegraphics[width=0.25\textwidth]{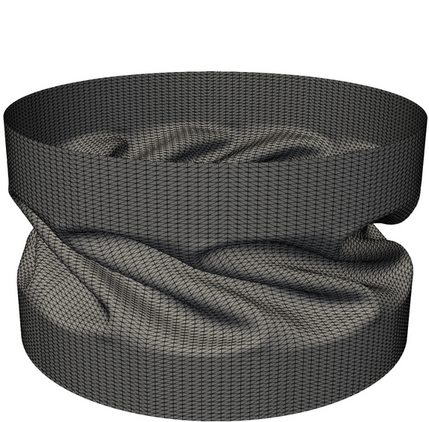} & \includegraphics[width=0.25\textwidth]{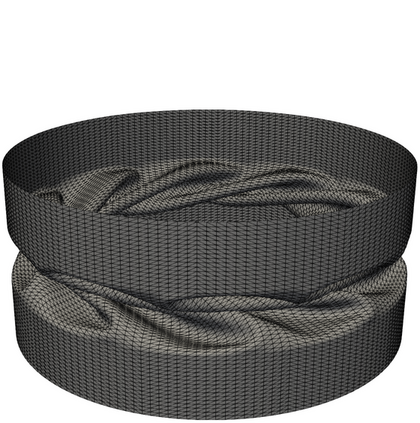} & \includegraphics[width=0.25\textwidth]{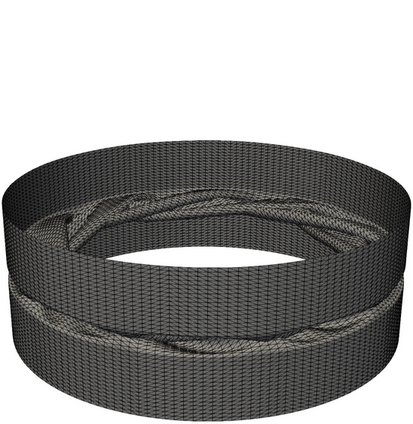}
	\\\\$\alpha = -\tfrac{3}{64}\cdot2\pi$ & $\alpha = -\tfrac{4}{64}\cdot2\pi$ & $\alpha = -\tfrac{5}{64}\cdot2\pi$ 
\end{tabular}
	\caption{Clamped cylinder with torsion displacement loads}
	\label{fig:cylinder}
\end{figure}

\begin{figure}
 \includegraphics[width=0.8\textwidth]{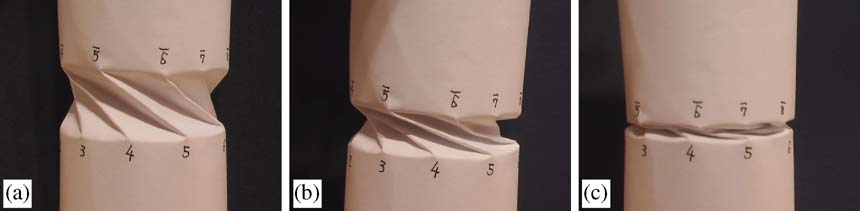}
 \caption{Paper-made cylinder under a torsion load, from~\cite{hunt_ario:2005}}
 \label{fig:twist_buckling_photo}
\end{figure}

\begin{figure}
 \begin{center}
  \begin{minipage}{0.5\textwidth}
   \begin{tikzpicture}
    \begin{axis}[height=0.25\textheight,
                 width=\textwidth,
                 xtick = data,
                 xticklabels={{\small $\frac{0\pi}{64}$},{\small $-\frac{2\pi}{64}$},{\small $-\frac{4\pi}{64}$},{\small $-\frac{6\pi}{64}$},{\small $-\frac{8\pi}{64}$},{ \small $-\frac{10\pi}{64}$}},
                 xticklabel pos=lower,
                 xlabel=torsion angle,
                 ytick = {7,9,11,13,15},
                 ylabel=height,
                 ylabel near ticks]

                 \addplot[color=mediumblue,very thick, mark=*] table[col sep=comma,header=true,x index=0,y index=1]
                 {cylinder-height.csv};
    \end{axis}
   \end{tikzpicture}
  \end{minipage}%
 \qquad
 \begin{minipage}{0.4\textwidth}
\begin{tabular}{c|c}
 torsion angle & height [$\mathsf{L}$] \\[0.007\textheight]
 \hline
 0 & 15.0000 \\[0.007\textheight]
 $-\frac{1}{64} \cdot 2\pi$ & 14.1329 \\[0.007\textheight]
 $-\frac{2}{64} \cdot 2\pi$ & 12.8815 \\[0.007\textheight]
 $-\frac{3}{64} \cdot 2\pi$ & 11.2033 \\[0.007\textheight]
 $-\frac{4}{64} \cdot 2\pi$ &  8.82082 \\[0.007\textheight]
 $-\frac{5}{64} \cdot 2\pi$ &  6.98778
\end{tabular}
 \end{minipage}
\end{center}
\caption{{Height of the loaded cylinder as a function of the torsion angle}}
\label{fig:height_under_load}
\end{figure}
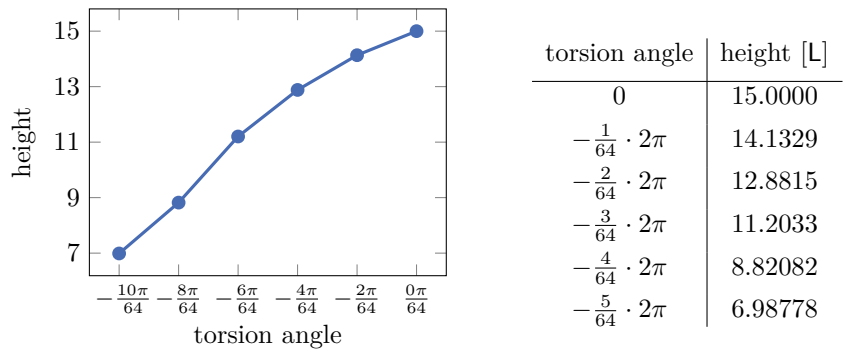

We discretize the domain using $80 \times 160 \times 2 = 25\,600$ triangular elements.
For the reference deformation $\bm_0$ and the loaded deformation $\bm$ we use second-order
Lagrange finite elements, and we use first-order geodesic finite elements for the
microrotation $\Qe$. Starting from $\alpha=0$ we load the structure in increments of $-\frac{1}{64} \cdot 2\pi$,
up to a load of $\alpha = -\tfrac{5}{64}\cdot2\pi$, to obtain a clockwise rotation.
The algebraic minimization problems are solved
again starting from the solutions of the previous load steps.

Figure~\ref{fig:cylinder} shows the results. The cylinder shows the expected
buckling behavior with evenly distributed folds. At the final load the deformable part of the
structure is reduced to almost zero height.
{Figure~\ref{fig:height_under_load} gives exact values for the cylinder height at the
different loading steps.  These were obtained by averaging all $x_3$-values of Lagrange points
on the upper boundary of the cylinder.}
To {further} highlight the quality of our simulations
we compare with a result of \textcite{hunt_ario:2005}, where a similar torsion problem
for a paper-made cylinder has been treated experimentally (Figure~\ref{fig:twist_buckling_photo}).
In comparison, our simulation shows features very similar to the sharp folds exhibited there.
Note, however, that paper deformations
are largely isometric, and membrane strains do not occur.  A shell model that,
unlike ours, does not support membrane deformations could therefore be more appropriate
for the modeling of such scenarios.

\subsection{Non-orientable shell surfaces}
\label{sec:non-orientable_shell surfaces}

In this final section we show simulations of non-orientable shells. It has been argued
in Chapter~\ref{sec:continuous_model} that our shell model is well-defined if $\omega$ is non-orientable
even though the original derivation in~\cite{neff_dim_reduction:2019,neff_derivation:2020}
contained terms that are orientation-dependent. We now reconfirm this assertion
of Chapter~\ref{sec:continuous_model} numerically, by testing with two classic examples
of non-orientable surfaces, namely the Möbius strip and the Klein bottle.

\subsubsection{The Möbius strip}

A Möbius strip with radius~$r$ and width~$t$ can be parametrized over the set
\begin{equation*}
 \omega = \Big[ -\frac{t}{2},\frac{t}{2}\Big] \times [0,2\pi],
\end{equation*}
with the following identification of points
\begin{equation*}
 (u,0) = (-u,2\pi)
 \qquad
 \forall u \in \Big[ -\frac{t}{2},\frac{t}{2}\Big].
\end{equation*}
A continuous reference deformation is then
\begin{equation*}
\bm_0 : \omega \to \R^3,
\qquad
\bm_0(u,v) \:=\:\begin{pmatrix}
 \big(r + u \cos \frac{v}{2}\big) \cos v \\
 \big(r + u \cos \frac{v}{2}\big) \sin v \\
u \sin \frac{v}{2}
\end{pmatrix}.
\end{equation*}
We use the values $r=3\,\mathsf{L}$ and $t = 4.5\,\mathsf{L}$.

We choose the same material parameters as for the previous examples, i.e.,
$\lambda = 4.4364\cdot10^4\,\mathsf{M}/(\mathsf{L}\mathsf{T}^{2})$
and $\mu = 2.7191\cdot10^4\,\mathsf{M}/(\mathsf{L}\mathsf{T}^{2})$ for the Lamé parameters,
$\mu_c = 0.1 \mu$,
$L_c = 5 \cdot 10^{-4}\,\mathsf{L}$, and $b_1 = b_2 = 1$, $b_3 = \tfrac{1}{3}$.
The thickness is $h= 0.05\,\mathsf{L}$.
We clamp the deformation and microrotation at all points $\eta \in \omega$
fulfilling $(\bm_0(\eta))_1 \ge 2.5$,
and we load all points fulfilling $(\bm_0(\eta))_1 \le -2.5$ with a constant volume force density of
$\mathbf{f} = h \cdot (6,0,0)\,\mathsf{M}/(\mathsf{L}\mathsf{T}^{2})$.

The Möbius strip reference deformation is discretized by a uniform grid of
$23 \times 120 \times 2 = 5520$ second-order triangles that interpolate $\bm_0$.
We use second-order Lagrange finite elements for the deformation $\bm_h$ and
second-order geodesic finite elements for the microrotation field $\Qeh$.

\begin{figure}
	\includegraphics[width=0.28\textwidth]{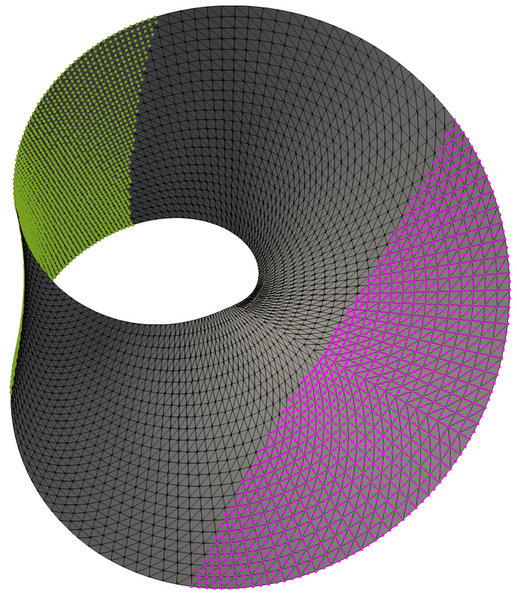}\quad\includegraphics[width=0.33\textwidth]{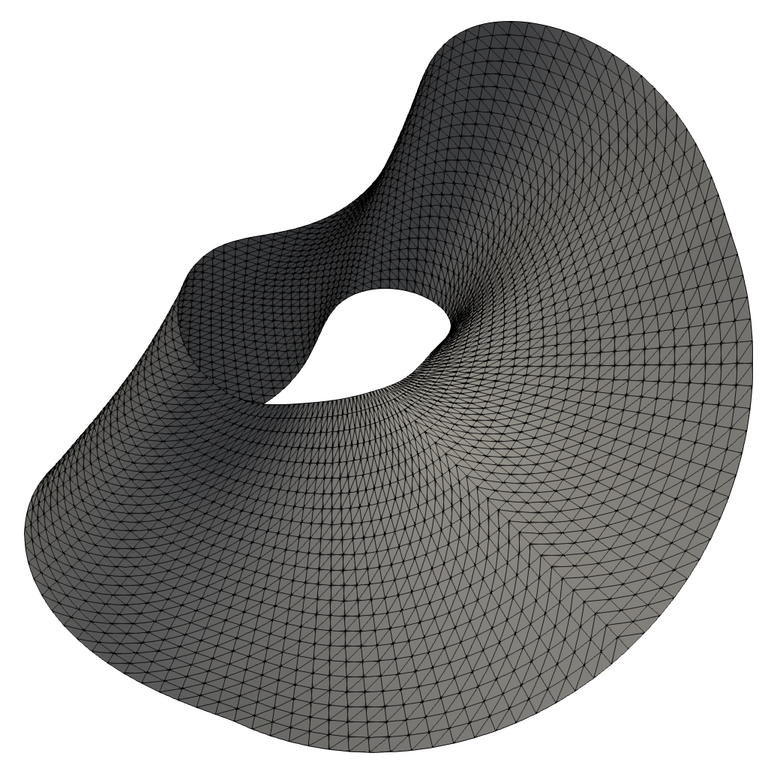}\quad\includegraphics[width=0.33\textwidth]{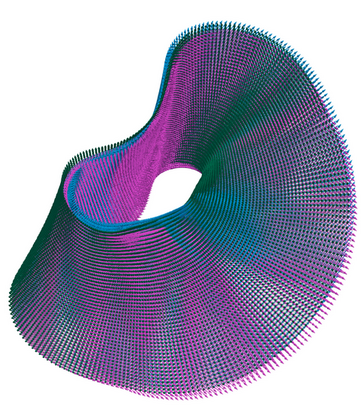}
	\caption{Stress-free configuration of the Möbius strip (left),
	with {\color{mediumgreen} nodes affected by the volume load (green)} and {\color{neonpink} Dirichlet nodes (pink)};
	the deformed configuration of the Möbius strip (center);
	and the discrete microrotation $\Qeh$ as an orthonormal frame of directors
	$({\color{neonpink}d_1}\,\vert\,{\color{lightblue}d_2} \,\vert\,{\color{mediumgreen}d_3})$ (right)}
	\label{fig:moebius}
\end{figure}

Figure~\ref{fig:moebius} shows the reference deformation with Dirichlet set and volume load
on the left, and the deformed configuration in the center.
To show how the non-orientability of the surface interacts with the microrotation field $\Qe$,
the right part of
Figure~\ref{fig:moebius} shows the discrete microrotation $\Qeh$ for the loaded Möbius strip
drawn as an orthonormal frame of director vectors. One sees that they are not in any obvious
relationship to the shell surface, in particular the third director is never normal
to the surface (and rarely ever close to it).
The microrotation $\Qeh$ forms a continuous field on~$\omega$ even though $\omega$
cannot be oriented.  This complements the
interpretation given in Chapter~\ref{sec:interpretation}: The microrotation $\Qe$ is not the
absolute local orientation of the shell, but rather the rotation that moves the reference
local orientation~$\bQ_0$ into the local orientation under load. As such a relative quantity,
it is independent of the orientability of the shell surface.

\subsubsection{The Klein bottle}

A Klein bottle can be constructed using the parameter domain
\begin{equation*}
 \omega = [0,2\pi] \times [0,2\pi]
\end{equation*}
with the identifications
\begin{equation*}
 (0,v) = (2\pi,v) \quad \forall v \in [0,2\pi]
 \qquad \text{and} \qquad
 (u,0) = (2\pi-u,2\pi)
 \quad
 \forall u \in [2,\pi].
\end{equation*}
A suitable reference deformation is then
\begin{align*}
\bm_0(u,v)
& =
\begin{pmatrix}
r \,(1-\sin u )\,\cos u +(2-\cos u)\,\cos v\,(2\,e^{-(u/2-\pi )^{2}}-1)
\\(2-\cos u)\,\sin v
\\t + t\,\sin u + \tfrac{2-\cos u}{2}\,\sin u \cos v \,e^{-(u-3\pi /2)^{2}}
\end{pmatrix},
\end{align*}
with $r = 1.5$ and $t = 5$.
As is well-known, this is not an injective map into $\R^3$.  However, recall that
the global injectivity of the stress-free deformation $\bm_0$ is not required for
the shell model (Chapter~\ref{sec:shell surface_geometry}).  From a mechanical point
of view there is no interaction between different intersecting sheets---they slide
right through each other.

As previously, we choose the material parameters
$\lambda = 4.4364\cdot10^4\,\mathsf{M}/(\mathsf{L}\mathsf{T}^2)$
and $\mu = 2.7191\cdot10^4\,\mathsf{M}/(\mathsf{L}\mathsf{T}^{2})$ for the Lamé parameters,
$\mu_c = 0.1\mu$, $L_c = 5 \cdot 10^{-4}\,\mathsf{L}$ for the internal length,
and $b_1 = b_2 = 1$, $b_3 = \tfrac{1}{3}$.
The thickness is $h= 0.05\,\mathsf{L}$.
To eliminate the rigid-body modes we clamp the displacement at all points
$\eta \in \omega$ with $(\bm_0(\eta))_1 \le -1.5$. The microrotation is not subject to Dirichlet conditions at all.
We then load the Klein bottle using a constant volume force density
$\mathbf{f} = h \cdot (10^3,0,0)\,\mathsf{M}/(\mathsf{L}\mathsf{T}^{2})$.

We discretize the Klein bottle reference deformation by a structured grid of
second-order triangles with $96 \times 128 \times 2 = 24\,576$ triangles.
For the deformation $\bm$ we use
second-order Lagrange finite elements, and for the microrotation we use first-order
geodesic finite elements.

\begin{figure}
	\includegraphics[height=0.3\textheight]{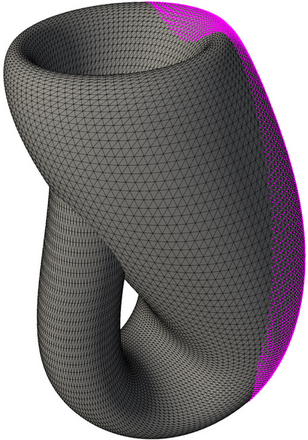}
	\hspace{0.1\textwidth}
	\includegraphics[height=0.3\textheight]{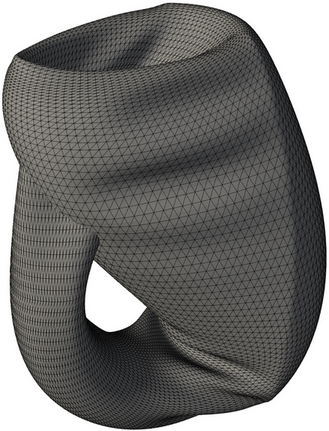}
	\caption{Stress-free configuration of the Klein bottle with Dirichlet nodes marked in purple (left),
	and deformed configuration under a volume load in the $x_1$-direction (right)}
	\label{fig:klein-bottle}
\end{figure}

Figure~\ref{fig:klein-bottle} shows the grid for the reference configuration, and the deformed configuration under load.
One can see the folds expected from such a loading scenario, and one can observe
that the thin handle does indeed move through the thicker right part without
any apparent resistance. Again, the non-orientability of the shell surface
does not pose any problem to the shell model and its discretization.

\printbibliography

\end{document}